\newtheorem{theorem}{Theorem}[section]
\newtheorem{lemma}[theorem]{Lemma}
\newtheorem{conjecture}[theorem]{Conjecture}
\theoremstyle{definition}
\newtheorem{definition}[theorem]{Definition}
\theoremstyle{remark}
\numberwithin{equation}{section}
\begin{document}

\title{Knots and surfaces}

\author{Makoto Ozawa}
\address{Department of Natural Sciences, Faculty of Arts and Sciences, Komazawa University, 1-23-1 Komazawa, Setagaya-ku, Tokyo, 154-8525, Japan}
\email{w3c@komazawa-u.ac.jp}

\subjclass[2010]{57Q35 (Primary), 57N35 (Secondary)}

\date{}

\dedicatory{}


\begin{abstract}
This article is an English translation of Japanese article "Musubime to Kyokumen", Math. Soc. Japan, Sugaku Vol. 67, No. 4 (2015) 403--423.
It surveys a specific area in Knot Theory concerning surfaces in knot exteriors.

In version 2, we added comments on the solutions or counterexamples for Conjecture 3.5, Conjecture 3.7 and Conjecture 5.30.
\end{abstract}

\maketitle





\section{Introduction}

A {\em knot} is either an embedding of the $1$-dimensional sphere into the $3$-dimensional sphere or its image\footnote{We shall work on the piecewise linear category, the differential category, or locally flat topological category.}.
Two knots are {\em equivalent} if they are mutually transformed by an orientation-preserving self homeomorphism of the $3$-dimensional sphere.
It is a fundamental problem on Knot Theory to determine whether given two knots are equivalent.
Since a knot itself is homeomorphic to the $1$-dimensional sphere, we cannot distinguish them by itself.
Then we observe the exterior of the knot.
If two knots are equivalent, then their exteriors are orientation-preserving homeomorphic. 
Conversely, by the Gordon--Luecke's knot complement theorem (\cite{GL1989}), if the exteriors of two knots are orientation-preserving homeomorphic, then they are equivalent.
Therefore, the fundamental problem on Knot Theory arrives at the homeomorphism problem of knot exteriors.

Since the homeomorphism problem of knot exteriors is a intrinsic problem, it is an efficient decisive condition what can exist in the knot exterior.
Then we shall consider an embedding of a surface into the knot exterior as we consider a simple closed curve on a surface.
If two knot exteriors are homeomorphic, then an embedding of a surface into one knot exterior which has some property moves to one of another knot exterior.
Therefore, we can consider the homeomorphism problem of knot exteriors by considering an embedding of a surface into the knot exterior.
Thus, it is an efficient means for the fundamental problem on Knot Theory to consider a problem of embeddings of surfaces into knot exteriors toward the embedding problem as knots.

The transversality theorem is most fundamental and important for handling surfaces embedded in knot exteriors.
By the transversality theorem, the intersection between two surfaces embedded in a knot exterior can be transverse, and it becomes a $1$-dimensional manifold.
This is a point of departure for research in surfaces embedded in knot exteriors.
If two surfaces are essential in the knot exterior, then the $1$-manifold obtained as the intersection of them can be essential in both surfaces (The essential transversality theorem).
In the case that two surfaces intersect essentially in this way, one can often reach an efficient conclusion since the $1$-manifold obtained as the intersection of them has a limitation on the surface.

There is no intersection if only one essential surface is given in the knot exterior.
In this case, we try to find some other convenient surface and consider the intersection with it.
For any knot, by using a Morse function on the $3$-sphere or the knot exterior, one can consider the intersection with the Heegaard surface as a level surface or the bridge decomposing sphere.
In this case, the intersection with the level surface can be essential and all saddle points can be essential (essential Morse position).
Also, for alternating knots, there is a standard position which is introduced by Menasco (\cite{M1984}).
By inserting small $3$-dimensional balls (bubbles) in each crossing on a regular diagram of a knot, one can consider the intersection with the $2$-sphere on which the regular diagram lies and the boundary of bubbles.

It is still efficient to consider two surfaces and their intersection even if no essential surface is given in the knot exterior.
For example, the Gordon--Luecke's knot complement theorem was proved by using the level sphere obtained from some minimal one among Morse positions (thin position) as the settlement of the property R conjecture by Gabai (\cite{G1987a}).
Specifically, by supposing that the theorem didn't hold and suppose that there exist two knots which have the homeomorphic exteriors, we have level spheres which are obtained from each Morse positions.
By a deep consideration on the graph which is obtained from the intersection between two planar surfaces which are obtained by restricting them on the knot exteriors, a contradiction can be reached.
Also, the Tait's flipping conjecture that any reduced alternating regular diagrams of a prime alternating knot are mutually transformed by flippings was proved by Menasco--Thistlethwaite by considering the intersection between two checkerboard surfaces which are obtained from two alternating regular diagrams (\cite{MT1993}).

We stated as above the case that two essential surfaces or only one essential surface are given and the case that no essential surface is given.
Next, let's consider that case that many surfaces are contained without intersection.
One can consider that the knot exterior is a large space and the knot itself is complicated as much as there exist mutually disjoint, mutually not parallel many essential surfaces in the knot exterior.
Can we take the number of such essential surfaces as infinitely large?
In fact, by the Kneser--Haken's finiteness theorem (\cite{K1929}, \cite{H1961a}), if the number of mutually disjoint essential surfaces in a $3$-manifold exceeds a constant number depending on the manifold, then they become to be mutually parallel.
This constant is called a Haken number, and it is a standard for measuring the range of manifolds.
In general, there are infinitely many essential surfaces in a $3$-manifold.
But Floyd--Oertel showed that there exists a set of finitely many branched surfaces such that any essential surface can be obtained from their branched surfaces (\cite{FO1984}).
Therefore, a study of essential surfaces in knot exteriors arrives at a study of branched surfaces which give them.
By Hatcher--Thurston, for essential surfaces in the $2$-bridge knot exteriors, branched surfaces are given (\cite{HT1985}).

The criterion of the fundamental classification on essential surfaces in knot exteriors is either the number of boundary components, the boundary slopes, and the genus etc.
The surfaces mainly dealt on study of knots are classified into the following four types on the relation between a knot $K$ and a surface $F$.
They are a closed surface which is disjoint from the knot ($K\cap F=\emptyset$), a tangle decomposing sphere which intersects the knot transversely ($K \pitchfork F$), a Seifert surface whose boundary is the knot ($K=\partial F$), and a closed surface which contains the knot, called a coiled surface ($K\subset F$).
The above four types of surfaces are summarized in the below table.
For each types, we can consider essential surfaces and Heegaard surfaces, and described surfaces and invariants which are related.
Moreover, for the first three Heegaard surfaces, the related Morse functions are cited.

\medskip

\begin{tabular}{l|l|ll}
& essential surfaces & Heegaard surfaces & Morse functions\\
\hline
$K\cap F=\emptyset$ & closed surface & tunnel number & $E(K)\to \mathbb{R}_+$ \\
$K\pitchfork F$ & tangle sphere & bridge number & $(S^3,K)\to \mathbb{R}$\\
$K=\partial F$ & Seifert surface & Morse--Novikov number & $E(K)\to S^1$ \\
$K\subset F$ & coiled surface & $h$-genus & \\
\end{tabular}

\medskip

We assume an elementary knowledge of topological spaces, manifolds, fundamental groups, homology groups, Morse theory, handle decompositions, Heegaard splittings for reading this manuscript (e.g. \cite{M2002}, \cite{S1993}).
And a knot in the $3$-sphere is denoted by $K$, $N(*)$,  $|*|$, $int(*)$, $E(K)=S^3-int N(K)$ denote a regular neighborhood, the number of connected components, the interior, the exterior of $K$ respectively.

\section{Fundamentals of essential surfaces}

\subsection{Essential surfaces}

\begin{definition}[Essential $1$-dimensional manifolds]

We say that a loop $\alpha$ properly embedded in a $2$-dimensional manifold $F$ is {\em inessential} if there exists a disk $D$ in $F$ such that $\partial D=\alpha$.
If $\alpha$ is not inessential, then we say that it is {\em essential}.
And we say that an arc $\alpha$ properly embedded in $F$ is {\em inessential} if there exists a disk $D$ in $F$ such that $\partial D=\alpha\cup \beta,\ \alpha\cap\beta=\partial \alpha=\partial \beta$ ($\beta$ is an arc in $\partial F$).
If $\alpha$ is not inessential, then we say that it is {\em essential}.
\end{definition}

Hereafter, we assume that a $3$-dimensional manifold $M$ is orientable.

\begin{definition}[Incompressible surfaces]
We say that an orientable surface $F$ properly embedded in a $3$-dimensional manifold $M$ is {\em compressible} if when $F$ is a disk, there exist a disk $D$ in $\partial M$ and a $3$-dimensional ball $B$ in $M$ such that $\partial F=\partial D$ and $\partial B=F\cup D$, when $F$ is a sphere, there exists a $3$-dimensional ball $B$ in $M$ such that $\partial B=F$, and otherwise, there exists a disk $D$ in $M$ such that $D\cap F=\partial D$ and $\partial D$ is an essential loop in $F$.
If $F$ is not compressible, then we say that it is {\em incompressible}\footnote{A two-sided surface is incompressible if and only if the homomorphism of the fundamental group induced by an inclusion map of the surface is injective.}.
\end{definition}

\begin{definition}[Boundary incompressible surfaces]
We say that an orientable surface properly embedded in a $3$-dimensional manifold $M$ is {\em boundary compressible} if there exists a disk $D$ in $M$ such that $D\cap F=\partial D\cap F=\alpha$ is an essential arc properly embedded in $F$ and $D\cap \partial M=\partial D-\rm{int}\alpha$ is an arc in $\partial M$.
If $F$ is not boundary compressible, then we say that it is {\em boundary incompressible}.
\end{definition}

\begin{definition}[Boundary parallel]
We say that an orientable surface properly embedded in a $3$-dimensional manifold $M$ is {\em boundary parallel} if there exists an embedding $h:F\times [0,1]\to M$ such that $h(F\times \{0\})=F$ and $h(F\times [0,1])\cap \partial M=h(\partial F\times [0,1]\cup F\times \{1\})$.
Namely, $F$ is isotopic to a subsurface in $\partial M$.
\end{definition}

\begin{definition}[Essential surfaces]
We say that an orientable surface properly embedded in a $3$-dimensional manifold $M$ is {\em essential} if $F$ is incompressible and boundary incompressible, and not boundary parallel.
For a non-orientable surface $F\subset M$, we say that $F$ is {\em essential} if an orientable surface $\partial N(F)$ is incompressible and boundary incompressible, and not boundary parallel in $M$.
\end{definition}

\begin{definition}[Irreducible and boundary irreducible]
We say that a $3$-dimensional manifold $M$ is {\em irreducible} if there does not exist an incompressible sphere.
And we say that $M$ is {\em boundary irreducible} if there does not exist an incompressible disk in $M$.
\end{definition}

\begin{definition}[Peripherally incompressible]
We say that an essential surface $F$ in a knot exterior $E(K)$ is {\em peripherally incompressible} if there exists an annulus $A$ which connects an essential and not boundary parallel loop in $F$ and a loop in $\partial E(K)$, and then $A$ is called a {\em peripherally compressing annulus}.
\end{definition}

\subsection{Essential transversality theorem}

\begin{definition}[General position]
We say that $n_i$-dimensional manifolds $X_i$ $(i=1,2)$ properly embedded in an $m$-dimensional manifold $Y$ {\em intersect transversely} if for any point $p\in X_1\cap X_2$, there exists a neighborhood $U$ such that $(U,U\cap X_1,U\cap X_2)$ is homeomorphic to $(\mathbb{R}^m,\mathbb{R}^{n_1}\times \{0\}^{m-n_1},\{0\}^{m-n_2}\times \mathbb{R}^{n_2})$ or $(\mathbb{R}^{m-1}\times\mathbb{R}_+,\mathbb{R}^{n_1}\times \{0\}^{m-n_1-1}\times\mathbb{R}_+,\{0\}^{m-n_2-1}\times \mathbb{R}^{n_2}\times\mathbb{R}_+)$.
We say that $X_1$ and $X_2$ are in {\em general position} if $X_1\cap X_2=\emptyset$ or $X_1$ and $X_2$ intersect transversely in an $(n_1+n_2-m)$-dimensional submanifold.
\end{definition}

\begin{theorem}[Transversality theorem cf. \cite{GP2010}]\label{general position}
For any $n_i$-dimensional manifolds $X_i$ $(i=1,2)$ properly embedded in an $m$-dimensional manifold, there exists an isotopy of $X_1$ so that $X_1$ and $X_2$ are in general position.
\end{theorem}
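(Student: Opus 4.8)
The plan is to work in the differential category, where transversality is a generic condition, and to realize the required perturbation of $X_1$ as an ambient isotopy. The two local models in the definition of transverse intersection correspond respectively to interior points of the ambient manifold $Y$ and to its boundary points, so the strategy is to establish genericity of transversality simultaneously for the embedding $X_1 \to Y$ and for its boundary restriction $\partial X_1 \to \partial Y$, and then to upgrade a transverse perturbation into an ambient isotopy.

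First I would set up a family of perturbations of the given proper embedding $\iota\colon X_1 \hookrightarrow Y$. After fixing a Riemannian metric and a tubular neighborhood, and embedding $Y$ in some Euclidean space $\mathbb{R}^N$ by Whitney's theorem (using a collar near $\partial Y$ so that properness and the condition $\iota(\partial X_1)\subset \partial Y$ can be preserved), I would define for $s$ in a small ball $S\subset \mathbb{R}^N$ the map $F(x,s)=\pi\bigl(\iota(x)+s\bigr)$, where $\pi$ denotes the nearest-point projection onto $Y$ inside the tubular neighborhood. By construction $F$ is a submersion in the $s$-variable, hence $F$ is automatically transverse to the submanifold $X_2\subset Y$, and its boundary restriction is transverse to $\partial X_2$ inside $\partial Y$.

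Next I would invoke the parametric transversality theorem together with Sard's theorem. Since $F$ is transverse to $X_2$ (and to $\partial X_2$ on the boundary), the set of parameters $s\in S$ for which the single map $f_s:=F(\,\cdot\,,s)$ fails to be transverse to $X_2$, or for which $f_s|_{\partial X_1}$ fails to be transverse to $\partial X_2$ in $\partial Y$, has measure zero. Choosing a regular value $s$ arbitrarily close to $0$, the map $f_s$ is at once transverse to $X_2$ in the interior and transverse along the boundary, which is precisely the requirement that $f_s(X_1)$ meet $X_2$ according to one of the two permitted local models, i.e.\ in general position in the sense defined above. Because proper embeddings form an open set in the strong topology and $f_s$ is $C^1$-close to $\iota$, the map $f_s$ is again a proper embedding. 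Finally, since $f_s$ differs from $\iota$ by a small perturbation, the isotopy extension theorem yields an ambient isotopy $H_t$ of $Y$ with $H_0=\mathrm{id}$ and $H_1\circ\iota=f_s$; the family $H_t\circ\iota$ is then the desired isotopy of $X_1$, after which $X_1$ and $X_2$ are in general position.

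The main obstacle I expect is the bookkeeping at the boundary. One must arrange the perturbation so that $X_1$ remains properly embedded with $\partial X_1$ staying inside $\partial Y$, while simultaneously achieving transversality of the two boundary submanifolds within $\partial Y$ and transversality of the interiors in $Y$. It is exactly this constraint that forces the parametric transversality theorem to be applied to both $F$ and its boundary restriction at once, and that makes a naive interior-only perturbation insufficient; handling the collar and the projection $\pi$ so as to respect $\partial Y$ is the delicate point of the argument.
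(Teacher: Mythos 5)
You should first be aware that the paper does not prove this statement at all: it is quoted as background with a citation to Guillemin--Pollack, and the only transversality argument the paper actually carries out is the one for the subsequent essential transversality theorem (Theorem \ref{intersection}). So your proposal has to be measured against the standard textbook proof, which is indeed the one you outline: a parametric family of translations composed with a tubular-neighborhood projection, parametric transversality plus Sard, openness of proper embeddings in the strong $C^1$ topology, and the track of the perturbation as an isotopy. For $\partial Y=\emptyset$ (or $\partial X_1=\emptyset$) your argument is complete and is exactly the cited one.

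The genuine gap is at the point you flag but do not fill. The family $F(x,s)=\pi(\iota(x)+s)$ is not a family of proper embeddings of the pair $(X_1,\partial X_1)$ into $(Y,\partial Y)$: for generic $s\neq 0$ a point of $\partial X_1$ is translated off $\partial Y$ and the nearest-point projection $\pi$ sends it into $\mathrm{int}\,Y$, while interior points near $\partial Y$ may be pushed onto $\partial Y$; so the regular-value parameters $s$ produced by Sard's theorem yield maps $f_s$ that are not admissible positions of $X_1$, and the ``simultaneous'' genericity for $F$ and its boundary restriction cannot be run on this family. The parenthetical appeal to a collar does not repair this, because any modification of $F$ constrained to preserve $\partial Y$ kills the submersivity in $s$ in the direction normal to $\partial Y$, which is precisely what your appeal to parametric transversality rests on. The standard repair is sequential rather than simultaneous: first apply your translation argument inside the closed $(m-1)$-manifold $\partial Y$ to isotope $\partial X_1$ transverse to $\partial X_2$ (there the construction works verbatim); then use a collar $\partial Y\times[0,1)$, in which properly embedded submanifolds may be taken vertical, to note that transversality in a neighborhood of $\partial Y$ is then automatic; finally perturb $X_1$ relative to that neighborhood of $\partial X_1$ by flows of compactly supported vector fields tangent to $\partial Y$ --- this is the content of the transversality extension theorem in the book the paper cites. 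Two smaller points: for noncompact $X_1$ a single bounded parameter ball does not give strong-topology smallness, so properness of $f_s$ requires an exhaustion argument (harmless in the paper's compact setting); and the isotopy of $X_1$ is most cleanly the track $t\mapsto f_{ts}$, each stage being an embedding for $s$ small, with the isotopy extension theorem invoked only if an ambient isotopy is wanted.
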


\begin{theorem}[Essential transversality theorem]\label{intersection}
Let $M$ be an irreducible and boundary irreducible $3$-dimensional manifold, and $F_1$ and $F_2$ be an incompressible and boundary incompressible surfaces properly embedded in $M$.
Then, there exist isotopies of $F_1$ and $F_2$ so that each component of $F_1\cap F_2$ is a loop or arc which is essential in both $F_1$ and $F_2$\footnote{An intersection between strongly irreducible Heegaard surfaces can be essential (\cite{RS1996}, \cite{KS2000})}.
\end{theorem}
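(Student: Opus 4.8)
The plan is to fix a transverse intersection and then reduce the number of intersection components by a sequence of isotopies until no inessential component survives. First I would apply the Transversality Theorem \ref{general position} to isotope $F_1$ so that $F_1$ and $F_2$ meet transversely; then $F_1\cap F_2$ is a properly embedded $1$-manifold in each surface, that is, a disjoint union of loops and arcs. I take the number of components $|F_1\cap F_2|$ as a complexity and argue that, as long as some component is inessential in $F_1$ or in $F_2$, this complexity can be strictly decreased. Since it is a nonnegative integer, the process terminates, and at termination every component is essential in both surfaces, which is exactly the conclusion.

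For the reduction I separate loops and arcs, removing all inessential loops first. Suppose some loop of $F_1\cap F_2$ is inessential in, say, $F_1$ (the roles of $F_1$ and $F_2$ being symmetric). Passing to an innermost such loop $c$, I obtain a disk $D\subset F_1$ with $\partial D=c$ and $\mathrm{int}\,D\cap F_2=\emptyset$. If $c$ were essential in $F_2$, then $D$ would be a compressing disk for $F_2$, contradicting its incompressibility; hence $c$ is inessential in $F_2$ and bounds a disk $D'\subset F_2$. Now $D\cup D'$ is an embedded $2$-sphere, so by irreducibility of $M$ it bounds a ball, across which I isotope $F_1$, pushing $D$ just past $D'$. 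Since $\mathrm{int}\,D$ was disjoint from $F_2$, this isotopy removes $c$ and strictly lowers $|F_1\cap F_2|$.

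The arc case is parallel but uses the boundary hypotheses. Taking an outermost inessential arc $\alpha$ of $F_1\cap F_2$ in $F_1$ yields a disk $D\subset F_1$ with $\partial D=\alpha\cup\beta$, $\beta\subset\partial M$, and $\mathrm{int}\,D\cap F_2=\emptyset$. Were $\alpha$ essential in $F_2$, the disk $D$ would be a boundary-compressing disk for $F_2$ (the arc of $\partial D$ lying on $F_2$ is $\alpha$ and the complementary arc $\beta$ lies in $\partial M$), contradicting boundary incompressibility; so $\alpha$ bounds a disk $D'\subset F_2$ cut off by an arc $\beta'\subset\partial M$. Gluing along $\alpha$, the disk $E=D\cup_\alpha D'$ is properly embedded with $\partial E=\beta\cup\beta'\subset\partial M$, and boundary irreducibility of $M$ forces $E$ to be boundary parallel; the ball it cobounds with a disk in $\partial M$ provides an isotopy of $F_1$ removing $\alpha$ and lowering the complexity.

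The main obstacle I expect is the bookkeeping in these isotopies rather than any single conceptual step: the ball (or half-ball) produced by (boundary) irreducibility may well contain other sheets of $F_1$ or $F_2$, so I must choose the push-across carefully to guarantee that no new intersections are created and that the count genuinely drops. This is also why I remove all inessential loops before treating arcs — a boundary-compression can reorganize the loop pattern — and why I must check that clearing the arcs does not resurrect inessential loops; once this ordering and the disjointness of the innermost and outermost disks from $F_2$ are in hand, each step is a clean reduction and finiteness of the complexity closes the argument.
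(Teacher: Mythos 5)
Your overall architecture matches the paper's: transversality (Theorem \ref{general position}), then innermost-loop/outermost-arc disks, incompressibility and boundary incompressibility to produce the matching disks $D'\subset F_2$, and irreducibility/boundary irreducibility to produce a ball across which one isotopes to drop the intersection count. But the difficulty you yourself flag as ``bookkeeping'' in your last paragraph is in fact a genuine gap, and it sits exactly where the paper invokes something you omit. Since $\mathrm{int}\,D\cap F_2=\emptyset$ by the innermost choice, the sphere $S=D\cup D'$ meets $F_2$ only in $D'$, so the rest of $F_2$ lies entirely on one side of $S$; the ball $B$ furnished by irreducibility may well be the side containing it, i.e., one may have $F_2\subset B$ (necessarily with $F_2$ closed), and no careful choice of the push-across can avoid this --- the ball is whatever irreducibility hands you, and in that case the reduction isotopy does not exist. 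The paper disposes of this case by a separate substantive input, Theorem \ref{handlebody}: an essential surface in a handlebody (here the ball $B$) is only a disk, so an incompressible non-disk $F_2$ cannot lie inside $B$; the same theorem is invoked again in the arc case when the ball bounded by $D\cup D'$ together with a disk in $\partial M$ swallows $F_2$. Without this dichotomy and the appeal to Theorem \ref{handlebody} (or an equivalent argument), your reduction step simply fails.

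A second, smaller defect: you isotope the wrong surface. You push $D\subset F_1$ across $B$ past $D'$, but as you correctly note $D'$ may intersect $F_1$, so $\mathrm{int}\,B$ may contain other sheets of $F_1$, and sweeping $D$ through $B$ is then not automatically an isotopy of the embedded surface $F_1$ (sheets of $F_1$ cannot pass through one another). In the good case $\mathrm{int}\,B\cap F_2=\emptyset$ the ball meets $F_2$ exactly in $D'$, so the clean move --- the one the paper makes --- is to isotope $F_2$, sweeping $D'$ across $B$ to a push-off of $D$; crossings with $F_1$ at intermediate times are harmless, and the final count strictly drops. Relatedly, the paper fixes $|F_1\cap F_2|$ minimal over all isotopies at the outset and derives a contradiction from any inessential component, which dispenses entirely with your worries about ordering loops before arcs and about arcs ``resurrecting'' inessential loops: under the minimality framing one only needs to exhibit a single count-reducing isotopy, not to control a whole iterative process.
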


\begin{proof}
By Theorem \ref{general position}, we may assume that $F_1$ and $F_2$ are in general position.
Then, $F_1\cap F_2$ is a $1$-dimensional manifold, that is, it consists of loops and arcs.
Take $|F_1\cap F_2|$ is minimal under isotopies of $F_1$ and $F_2$.

First, suppose that there exists a loop component of $F_1\cap F_2$ which is inessential in $F_1$.
Let $\alpha$ be an {\em innermost loop} in $F_1$, $\delta$ be a disk in $F_1$ satisfying $\partial \delta =\alpha$.
Since this disk $\delta$ satisfies that $\delta\cap F_2=\partial \delta$, by the incompressibility of $F_2$, $\partial \delta$ is an inessential loop in $F_2$.
Hence, there exists a disk $\delta'$ in $F_2$ such that $\partial \delta'=\alpha$.
Here, we remark that since $\delta$ is an innermost disk, $\text{int}\delta$ does not intersect $F_2$, but $\delta'$ may intersect $F_1$.

A $2$-sphere $S=\delta\cup \delta'$ is obtained from these two disks $\delta$ and $\delta'$.
By the irreducibility of $M$, there exists a $3$-ball $B$ such that $\partial B=S$.
If $\text{int}B\cap F_2=\emptyset$, then we can remove a loop component $\alpha$ of $F_1\cap F_2$ by an isotopy from $\delta'$ to $\delta$ along $B$.
This contradicts the minimality of $|F_1\cap F_2|$.
Otherwise, if $\text{int}B\cap F_2\ne\emptyset$, namely, in the case that $B\supset F_2$, it contradicts Theorem \ref{handlebody}.
Therefore, every loop component of $F_1\cap F_2$ is essential in $F_1$.
Similarly, one can prove that it is essential in $F_2$.

Next, suppose that there exists an arc component of $F_1\cap F_2$ which is inessential in $F_1$.
Let $\alpha$ be an {\em outermost arc} in $F_1$, $\delta$ be a disk in $F_1$ satisfying $\partial \delta =\alpha\cup \beta$ ($\beta$ is an arc in $\partial F_1$).
Since this disk $\delta$ satisfies that $\delta\cap F_2=\alpha$ and $\delta\cap \partial M=\beta$, by the boundary incompressibility of $F_2$, $\alpha$ is an inessential arc in $F_2$.
Hence, there exists a disk $\delta'$ in $F_2$ such that $\partial \delta'=\alpha\cup \beta'$ ($\beta'$ is an arc in $\partial F_2$).
Here, we remark that since $\delta$ is an outermost disk, $\text{int}\delta$ does not intersect $F_2$, but  $\delta'$ may intersect $F_1$.

A disk $D=\delta\cup \delta'$ is obtained from these two disks $\delta$ and $\delta'$.
By the boundary irreducibility of $M$, there exists a $3$-ball $B$ in $M$ such that $\partial B=D\cup D'$ ($D'$ is a disk in $\partial M$).
If $\text{int}B\cap F_2=\emptyset$, then we can remove an arc component $\alpha$ of $F_1\cap F_2$ by an isotopy from $\delta'$ to $\delta$ along $B$.
This contradicts the minimality of $|F_1\cap F_2|$.
Otherwise, if $\text{int}B\cap F_2\ne\emptyset$, namely, in the case that $B\supset F_2$, it contradicts Theorem \ref{handlebody}.
Therefore, every arc component of $F_1\cap F_2$ is essential in $F_1$.
Similarly, one can prove that it is essential in $F_2$.
\end{proof}

\begin{theorem}[{\cite[Proposition 4.8]{MS1994}}, \cite{W1968}, cf. \cite{E1961}]
Let $M$ be an irreducible and boundary irreducible $3$-manifold, $F_1$ and $F_2$ be incompressible and boundary incompressible surfaces properly embedded in $M$.
If $F_1$ and $F_2$ are in general position, then
\begin{enumerate}
\item if $F_1$ is isotopic to $F_2$, then there exists a parallel region between $F_1$ and $F_2$.
\item if $F_1\cap F_2\ne\emptyset$, and $F_1$ is isotopic to a surface disjoint from $F_2$, there exists a parallel region between $F_1$ and $F_2$.
\end{enumerate}
\end{theorem}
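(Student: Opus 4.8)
The plan is to treat the ambient isotopy as the primary object and to extract a parallel region from a general-position analysis of how it sweeps across $F_2$, using the Essential Transversality Theorem (Theorem \ref{intersection}) and the irreducibility of $M$ to upgrade a local ball or product region into an honest parallel region. The natural starting point is the core disjoint case: suppose $F_1\cap F_2=\emptyset$ and $F_1$ is isotopic to $F_2$, and let $W$ be the region of $M$ cut along $F_1\cup F_2$ that the isotopy sweeps through. I would show $W\cong F_1\times I$ as follows. The region $W$ is irreducible, since any sphere in $W$ bounds a ball in the irreducible $M$ and that ball must lie inside $W$ because $\partial W$ is incompressible; the two boundary copies of $F_1$ and $F_2$ are incompressible and boundary incompressible in $W$; and the given isotopy carries one boundary component onto the other. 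From these facts one concludes that $W$ is a product $I$-bundle, for instance by a handle-decomposition and Euler-characteristic argument ruling out essential compressions and stray $1$-handles. This is exactly a Waldhausen-type product lemma, and it produces the desired parallel region in the disjoint case.

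Next I would reduce the intersecting cases to this core. For part (2), let $h_t$ be an ambient isotopy with $h_0=\mathrm{id}$ and $h_1(F_1)$ disjoint from $F_2$, and put the family $F_1^t:=h_t(F_1)$ in general position with respect to $F_2$, so that $F_1^t\pitchfork F_2$ for all but finitely many $t$ and each exceptional time carries a single nondegenerate center or saddle tangency. Since $F_1\cap F_2\ne\emptyset$ while $F_1^1\cap F_2=\emptyset$, intersection curves must be destroyed as $t$ increases. At a center tangency destroying an innermost intersection circle $\alpha$, the circle bounds a small disk $\delta_1\subset F_1^t$ and a disk $\delta_2\subset F_2$, and by irreducibility the sphere $\delta_1\cup\delta_2$ bounds a ball, that is, a product region $\cong D^2\times I$. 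Enlarging this region outward across the adjacent essential intersection curves (which are controlled by Theorem \ref{intersection}) and tracking it back along the isotopy yields a full parallel region $G\times I$ with $G\times\{0\}\subset F_1$ and $G\times\{1\}\subset F_2$.

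Part (1) follows the same scheme. I would first perturb by Theorem \ref{intersection} so that every component of $F_1\cap F_2$ is essential in both surfaces. If the surfaces are then already disjoint, the core product lemma applies directly. Otherwise the isotopy identifying $F_1$ with $F_2$ is swept across $F_2$ exactly as in part (2), and a center tangency (or an outermost saddle tangency, in the case of arc components meeting $\partial M$) furnishes the parallel region. In both parts the essential transversality theorem is what guarantees that the curves surviving at the generic times are essential, which is what lets a local ball region be grown into a genuine product rather than collapsing.

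I expect the main obstacle to be the core product lemma itself: showing that the region $W$ between two disjoint parallel incompressible surfaces is literally a product $F\times I$, and not merely a homology cobordism or an $I$-bundle carrying extra handles. Controlling this genuinely requires combining irreducibility, boundary irreducibility, and the incompressibility and boundary incompressibility of both boundary copies to exclude essential compressing disks and $1$-handles inside $W$; this is precisely where the hypotheses on $M$ and on the $F_i$ are used. A secondary technical nuisance is the tangency bookkeeping in the sweep-out — the center versus saddle cases, and the loop versus arc cases, the latter interacting with $\partial M$ and invoking boundary irreducibility in the saddle analysis — together with the careful enlargement of a local region into a global parallel region; these must each be checked but should present no conceptual difficulty once the core lemma is established.
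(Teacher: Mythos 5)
First, a point of comparison that matters here: the paper does not prove this statement at all --- it is imported from the literature (Sakuma {\cite[Proposition 4.8]{MS1994}}, going back to Waldhausen \cite{W1968}), so your proposal can only be measured against the standard argument there, not an in-paper proof. Your overall architecture --- reduce everything to a disjoint-case product lemma plus an analysis of an isotopy pushing $F_1$ off $F_2$ --- is the right Waldhausen-style skeleton, and your reduction of (1) to (2) (via a normal push-off of $F_2$, disjoint from $F_2$, when $F_1\cap F_2\ne\emptyset$) is fine. But there are two genuine gaps, both located exactly at the crux, and you have flagged them without filling them.

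(i) The disjoint-case product lemma cannot be obtained by ``a handle-decomposition and Euler-characteristic argument.'' Euler characteristic does not distinguish a product from a cobordism carrying algebraically cancelling $1$- and $2$-handles, and incompressibility of the two boundary surfaces does not by itself exclude such handles; moreover ``the region the isotopy sweeps through'' is not well defined, since the ambient isotopy need not stay inside one complementary component of $F_1\cup F_2$, so you must first argue that $F_1$ and $F_2$ cobound a region at all. The real content here is Waldhausen's machinery for Haken manifolds (hierarchies, homotopy-implies-isotopy), and either it must be quoted as such or reproved; your sketch asserts a mechanism that does not suffice. (ii) In part (2), the ball produced at a center tangency is a product region between \emph{inessential} disks $\delta_1\subset F_1^t$ and $\delta_2\subset F_2$, whereas the theorem demands a parallel region between the \emph{original} surfaces, whose core subsurface has essential boundary once Theorem \ref{intersection} has been applied. ``Enlarging outward across the adjacent essential intersection curves'' presupposes that the pieces of $F_1$ and $F_2$ flanking those curves are parallel --- which is essentially the conclusion being proved --- and ``tracking back along the isotopy'' is not an operation on embedded product regions: saddle and center tangencies at intermediate times change the intersection pattern, and nothing you say ensures a product region at the tangency time persists to time $0$. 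The standard proof avoids the sweep-out altogether: regard the isotopy as a map $f\colon F_1\times[0,1]\to M$ with $f_0$ the inclusion and $f_1(F_1)\cap F_2=\emptyset$, make $f$ transverse to $F_2$, and analyze the properly embedded preimage surface $f^{-1}(F_2)\subset F_1\times[0,1]$, which meets $F_1\times\{0\}$ in $F_1\cap F_2$ and misses $F_1\times\{1\}$; incompressibility, boundary-incompressibility, irreducibility and boundary-irreducibility are used to simplify this preimage until an innermost or outermost component exhibits the product region. That global preimage analysis is the missing idea, and the local tangency bookkeeping in your sketch cannot substitute for it.
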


By the argument similar to the proof of Theorem \ref{intersection} with respect to a level sphere for the standard Morse function of the $3$-sphere, one can eliminate an innermost inessential saddle, and hence can show the next theorem.

\begin{theorem}[\cite{F1948}, \cite{H1954} cf. \cite{A1924}, {\cite[Theorem 1.1]{AH1}}]\label{3-sphere}
The $3$-sphere does not contain an essential surface.
\end{theorem}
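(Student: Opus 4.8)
The plan is to mimic the innermost-disk argument in the proof of Theorem~\ref{intersection}, using a sweepout of $S^3$ by level spheres in place of the second surface $F_2$. First I would record that, since $S^3$ is closed, a properly embedded surface is closed, so the boundary-incompressibility and boundary-parallel conditions are vacuous and ``essential'' means simply ``incompressible''. Moreover every closed surface embedded in $S^3$ is two-sided, hence orientable (recall that a closed non-orientable surface does not embed in $S^3$), and since $S^3$ is irreducible every $2$-sphere bounds a ball, so a sphere is never incompressible. Thus it suffices to assume that $F$ is a closed incompressible surface of genus at least $1$ and to derive a contradiction.

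Next I would fix the standard Morse function $h\colon S^3\to\mathbb{R}$ obtained from one coordinate of the round $S^3\subset\mathbb{R}^4$; it has exactly two critical points (the poles), and every regular level set $S_t=h^{-1}(t)$ is a $2$-sphere. Using Theorem~\ref{general position} I would isotope $F$ so that $h|_F$ is Morse, and among all such positions I would take the number of index-$1$ critical points (saddles) of $h|_F$ to be minimal; since $\chi(F)=\#\min-\#\text{saddle}+\#\max$ is fixed, this is the same as minimizing the total number of critical points. For a regular value $t$, the intersection $F\cap S_t$ consists of loops lying on the $2$-sphere $S_t$, so every such loop is inessential \emph{on} $S_t$. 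Now suppose a saddle survives; then some regular level meets $F$ in at least one loop, and choosing an \emph{innermost} such loop $\alpha$ on $S_t$ yields a disk $\delta\subset S_t$ with $\mathrm{int}\,\delta\cap F=\emptyset$, exactly as $\delta$ is chosen in the proof of Theorem~\ref{intersection}. If $\alpha$ is essential on $F$, then $\delta$ is a compressing disk and $F$ is compressible, a contradiction; hence $\alpha$ bounds a disk $\delta'$ on $F$. The $2$-sphere $\delta\cup\delta'$ bounds a ball $B$ by irreducibility, the case $\mathrm{int}\,B\cap F\ne\emptyset$ being excluded precisely as in the proof of Theorem~\ref{intersection}, and pushing $\delta'$ across $B$ and past $S_t$ is an isotopy of $F$ after which $h|_F$ has strictly fewer critical points, hence strictly fewer saddles. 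This contradicts minimality, so at the minimal position $h|_F$ has no saddle at all.

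Finally, a closed surface carrying a Morse function with only maxima and minima satisfies $\chi(F)=\#\min+\#\max\ge 2$, so each component is a $2$-sphere, contradicting the assumption that $F$ has genus at least $1$. Hence $S^3$ contains no essential surface. I expect the main obstacle to be the bookkeeping in the reduction step: one must verify that removing the innermost inessential loop along the ball $B$ can be arranged to strictly lower the total number of critical points of $h|_F$ (i.e.\ to eliminate an innermost inessential saddle) rather than merely to delete one intersection circle, and one must dispose of the engulfing case $B\supset F$ just as is done for $F_2$ in the proof of Theorem~\ref{intersection}.
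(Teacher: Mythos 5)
Your proposal takes essentially the same route as the paper, whose entire proof of Theorem \ref{3-sphere} is the one-line remark that one runs the innermost-loop argument of Theorem \ref{intersection} against the level spheres of the standard Morse function on $S^3$ to ``eliminate an innermost inessential saddle''---exactly your plan, down to the appeal to irreducibility of $S^3$ (Alexander's theorem, \cite{A1924}, which the paper likewise treats as known input) for the ball $B$ and the disposal of the engulfing case via Theorem \ref{handlebody}. The bookkeeping you flag at the end---arranging that the push across $B$ strictly reduces the number of saddles rather than merely deleting one intersection circle---is precisely the detail the paper also leaves implicit in that phrase, so your write-up is, if anything, more explicit than the source.
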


If the $3$-ball is obtained from a $3$-manifold by cutting it along essential disks, then it is called a {\em handlebody}.
By considering the intersection with essential disks of a handlebody, one can show the next theorem as well as Theorem \ref{intersection}.

\begin{theorem}[\cite{BO1983}, \cite{HS2001}]\label{handlebody}
An essential surface in a handlebody is only a disk\footnote{Moreover, it was shown that an essential surface in a compression body, which is a generalization of a handlebody (\cite{FB1983}), is only a disk or a vertical annulus ({\cite[Lemme 9]{BO1983}}, {\cite[Lemma 2.4]{HS2001}}).}.
\end{theorem}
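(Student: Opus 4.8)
The plan is to mimic the proof of Theorem \ref{intersection}, taking for the second surface a complete system of meridian disks $\mathcal{D} = D_1 \cup \cdots \cup D_g$ that cuts the handlebody $H$ into a $3$-ball $B$. Let $F$ be an essential surface in $H$; by Theorem \ref{general position} I may put $F$ transverse to $\mathcal{D}$ and then choose the isotopy class of $F$ so that $|F \cap \mathcal{D}|$ is minimal. The intersection $F \cap \mathcal{D}$ is a disjoint union of loops and arcs lying in the disks $D_i$. The decisive observation is that a disk contains neither an essential loop nor an essential arc, so every component of $F \cap \mathcal{D}$ is automatically inessential in $\mathcal{D}$; if I can run the essentiality argument of Theorem \ref{intersection}, it will force $F \cap \mathcal{D} = \emptyset$.

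First I would eliminate the loops exactly as in Theorem \ref{intersection}. An innermost loop $\alpha$ on some $D_i$ bounds a disk $\delta \subset D_i$ with $\text{int}\,\delta \cap F = \emptyset$; since $F$ is incompressible, $\alpha$ bounds a disk $\delta'$ on $F$, and the sphere $\delta \cup \delta'$ bounds a $3$-ball by irreducibility of $H$ (handlebodies are irreducible). An isotopy across this ball lowers $|F \cap \mathcal{D}|$, contradicting minimality. Hence there are no loop components.

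The arc case is where the argument genuinely departs from Theorem \ref{intersection}, and this is the step I expect to be the main obstacle: the proof of Theorem \ref{intersection} invokes boundary irreducibility of the ambient manifold, whereas $H$ itself is visibly not boundary irreducible, since its meridian disks are essential. Take an arc $\alpha$ of $F \cap \mathcal{D}$ that is outermost on $D_i$, cutting off a clean disk $\delta \subset D_i$ with $\partial \delta = \alpha \cup \beta$ and $\beta \subset \partial D_i \subset \partial H$. Then $\delta$ is a boundary compressing disk for $F$ along $\alpha$, so boundary incompressibility of $F$ forces $\alpha$ to be inessential on $F$; thus $\alpha$ cuts off a disk $\delta'' \subset F$ with $\partial \delta'' = \alpha \cup \gamma$ and $\gamma \subset \partial F$. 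To convert this into a reduction of $|F \cap \mathcal{D}|$ I would pass to the cut-open ball $B$, which, being a ball, is both irreducible and boundary irreducible, so that the boundary step of Theorem \ref{intersection} applies there verbatim. The technical care needed is that $\delta''$ may meet $\mathcal{D}$ in its interior; this I handle by choosing $\delta''$ innermost on $F$ (no arcs of $F \cap \mathcal{D}$ in its interior, which is possible since there are no loops left), so that pushing $\delta''$ across $\delta$ through $D_i$ deletes $\alpha$ and strictly lowers the count, again contradicting minimality. Reconciling the outermost-on-$\mathcal{D}$ condition with the innermost-on-$F$ condition is the delicate point of the proof.

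Once both reductions are complete, $F \cap \mathcal{D} = \emptyset$, so $F$ lies in the $3$-ball $B = H \setminus N(\mathcal{D})$. A compressing disk for $F$ in $B$ would be one in $H$, so $F$ remains incompressible in $B$, and likewise it is boundary incompressible there; since a ball is irreducible and boundary irreducible, the analog of Theorem \ref{3-sphere} for the $3$-ball shows that an incompressible and boundary incompressible surface in $B$ must be a disk. As $F$ was disjoint from $\mathcal{D}$, this disk is a disk in $H$ as well, which is the desired conclusion.
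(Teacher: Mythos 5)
The ``delicate point'' you flag at the end of your arc step is not merely delicate: it is a genuine gap, and the strategy as stated cannot be completed. Your reduction needs a single arc $\alpha$ of $F\cap\mathcal{D}$ whose two cut-off disks are \emph{simultaneously} clean, i.e.\ $\text{int}\,\delta\cap F=\emptyset$ (outermost on $\mathcal{D}$) and $\text{int}\,\delta''\cap\mathcal{D}=\emptyset$ (innermost on $F$); only then does a product region between $\delta''$ and a push-off of $\delta$ exist, giving the isotopy that deletes $\alpha$. These two properties come from different extremal choices, and in general no arc enjoys both. More decisively, the intermediate claim your argument is driving at --- that an essential surface can always be isotoped off a \emph{fixed} complete disk system --- is false: in a handlebody of genus at least $2$ the disk complex has infinite diameter, so there are essential disks at distance at least $2$ from every $D_i$, and such a disk meets $\mathcal{D}$ in arcs in every position, while being exactly the kind of surface the theorem allows. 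So no refinement of innermost/outermost choices can force $F\cap\mathcal{D}=\emptyset$. The root cause is the one you yourself identify: $H$ is not boundary irreducible, so the disk $\delta\cup\delta''$ may be an \emph{essential} disk of $H$, and the parallelism ball used in the arc step of Theorem \ref{intersection} simply does not exist. Your fallback of ``passing to the cut-open ball $B$'' does not rescue this, because after cutting, the arc $\beta\subset\partial\delta$ lies on a scar of $\mathcal{D}$, which is not part of $\partial H$, so boundary incompressibility of $F$ in $H$ gives no control there.

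The repair, which is how the cited proofs (\cite{BO1983}, \cite{HS2001}) proceed, is to vary the disk system rather than isotope $F$ across $\delta$. Minimize $|F\cap\mathcal{D}|$ over \emph{all} complete disk systems as well as isotopies of $F$. Your outermost-on-$\mathcal{D}$ argument shows some arc is inessential on $F$; take $\alpha_0$ whose disk $\delta''_0\subset F$ is innermost, so $\text{int}\,\delta''_0\cap\mathcal{D}=\emptyset$, and boundary-surger the disk $D_j$ containing $\alpha_0$ along $\delta''_0$, obtaining disks $E_1,E_2$ with $|E_1\cap F|+|E_2\cap F|\le|D_j\cap F|-1$. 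Since $H$ cut along $\mathcal{D}-D_j$ is a solid torus in which $D_j$ is a meridian, exactly one $E_i$ is again a meridian, and $(\mathcal{D}-D_j)\cup E_i$ is a complete system meeting $F$ strictly fewer times --- contradicting minimality. This kills the arcs, your loop argument kills the loops (there your gloss is fine for connected $F$, since $F\cap B'=\delta'$ when $F$ has boundary), and your final ball step then goes through; it also dissolves the disk-complex counterexample, since any essential disk is disjoint from \emph{some} complete system. Note the paper itself gives only the one-line hint preceding the theorem (``by considering the intersection with essential disks \dots as well as Theorem \ref{intersection}'') and defers to the references; your expansion is faithful to that hint, but the hint conceals exactly the step where the analogy with Theorem \ref{intersection} breaks down.
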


\section{Position of knots}

\subsection{Bridge positions and Morse positions ($(S^3,K)\to \mathbb{R}$)}

A knot $K$ is in {\em Morse position} with respect to the standard Morse function $h:S^3\to \mathbb{R}$ if $h|_K$ is a Morse function\footnote{In general, for a knot in a $3$-manifold, Morse position can be defined by any Morse function.}.
Hereafter, we assume that a knot $K$ is in Morse position.
Let $t_1< \cdots< t_n$ be critical value of $h|_K$, and choose $r_i\in \mathbb{R}$ $(i=1,\ldots,n-1)$ so that $t_i<r_i<t_{i+1}$.
We say that a level sphere $S_{r_i}=h^{-1}(r_i)$ $(1\le i\le n-1)$ is a {\em thick sphere} if $|S_{r_{i-1}}\cap K|<|S_{r_{i}}\cap K|$ and $|S_{r_{i}}\cap K|>|S_{r_{i+1}}\cap K|$.
We say that a level sphere $S_{r_i}=h^{-1}(r_i)$ is a {\em thin sphere} if $|S_{r_{i-1}}\cap K|>|S_{r_{i}}\cap K|$ and $|S_{r_{i}}\cap K|<|S_{r_{i+1}}\cap K|$.
We say that a knot $K$ is in {\em bridge position} if it is in Morse position without thin level spheres.
Then, $K$ has only one thick level sphere, and it separates all maximal points from all minimal points of $K$.
This level sphere is called a {\em bridge decomposing sphere}.
The minimum number of maximal points among all bridge position is called the {\em bridge number}, and denoted by $b(K)$ (\cite{Sch1954}).

A disk $D$ satisfying the following is called an {\em upper (lower) disk} of $K$ with respect to a level sphere $S_{r_i}$.
The boundary of $D$ is divided into two arcs as $\partial D=\alpha\cup\beta$, $\alpha$ is a subarc of $K$ which contains exactly one maximal (minimal) point, and $\beta$ is an arc on $S_{r_i}$.
An upper (lower) disk is called a {\em strongly upper (lower) disk} if it does not intersect all thick and thin spheres in its interior.
If there exist a strongly upper disk and a strongly lower disk which have a common one point of $K$ with respect to a level sphere, then there exists an isotopy which cancels their maximal point and minimal point.
We call this a {\em Type I move}.
And if there exist a strongly upper disk and a strongly lower disk which are mutually disjoint, then there exists an isotopy which interchanges their maximal point and minimal point.
We call this a {\em Type II move}.
There are theorems for bridge positions and Morse positions which correspond to the Reidemeister's theorem for regular diagrams of knots (\cite{R1926}, \cite{AB1926}).

\begin{theorem}[Stably equivalent theorem for bridge positions \cite{B1976}, \cite{H1998}]\label{fundamental1}
Two knots are equivalent if and only if their bridge positions are mutually moved by a sequence of Type I moves and its reverse moves.
\end{theorem}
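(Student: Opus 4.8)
The plan is to read this as a Reidemeister--Singer type statement: a Type I move is a \emph{destabilization} (cancelling a maximum--minimum pair) and its reverse is a \emph{stabilization} (inserting a trivial cancelling pair), and the assertion is that any two bridge positions of equivalent knots are connected by stabilizations and destabilizations. One direction is immediate: a Type I move and its reverse are each realized by an ambient isotopy of $S^3$ supported near the relevant cancelling pair, so if the bridge positions of two knots are joined by such a sequence, the knots are ambient isotopic and hence equivalent. The content lies in the converse.

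For the converse I would start from two equivalent knots $K_0$ and $K_1$, each already in bridge position. Equivalence is realized by an orientation-preserving self-homeomorphism of $S^3$, and since every such homeomorphism is isotopic to the identity there is an ambient isotopy $\{\Phi_t\}_{t\in[0,1]}$ with $\Phi_0=\mathrm{id}$ and $\Phi_1(K_0)=K_1$. Put $K_t=\Phi_t(K_0)$ and study the one-parameter family of functions $f_t=h|_{K_t}$ on the circle. After a perturbation of the isotopy rel its two endpoints, I would arrange that $\{f_t\}$ is a generic path in the space of smooth functions on $S^1$. By the standard theory of generic one-parameter families (Cerf theory), $f_t$ is then Morse for all but finitely many parameter values $t_1<\cdots<t_k$, and at each exceptional value exactly one codimension-one event occurs: a birth or death of a cancelling pair of critical points, or a transverse crossing of two critical values. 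A birth is a reverse Type I move, and a death, once its upper and lower disks are made strongly so, is a Type I move; a crossing of two critical points of the \emph{same} type (two maxima, or two minima) keeps all maxima above all minima and so changes the bridge structure not at all; the only genuinely new phenomenon is the crossing of a maximum past a minimum, which is precisely a Type II move and which temporarily produces a thin sphere. This yields a Morse-position Reidemeister theorem: equivalent knots in Morse position are joined by a sequence of Type I moves, Type II moves, and their reverses.

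It remains to eliminate the Type II moves and to normalize the intermediate Morse positions to bridge positions, using only Type I moves and their reverses. The key device is \emph{stabilize--exchange--destabilize}: near the locus where a maximum and a minimum are about to swap heights, insert a small cancelling pair by a reverse Type I move, carry out the exchange inside the resulting collar, and then remove the auxiliary pair by a Type I move, so that the net effect of a Type II move is expressed through stabilization and destabilization alone. The main obstacle is to prove that this device genuinely applies in every combinatorial configuration, i.e.\ that exchanging a maximum with a minimum, and equivalently passing through a thin sphere, is equivalent modulo Type I and its reverse to the identity. For this I would run the same innermost-loop and outermost-arc analysis used in the proof of Theorem~\ref{intersection}: the strongly upper and lower disks guiding the exchange can be taken to meet the thick and thin level spheres in innermost loops and outermost arcs, and irreducibility of $S^3$ (a consequence of Theorem~\ref{3-sphere}) lets one surger these disks so that the inserted cancelling pair absorbs the unwanted intersections. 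Carrying out this local surgery for each type of max--min crossing, and checking that the bookkeeping returns the family to an honest bridge position after every exceptional value $t_i$, completes the reduction and hence the theorem.
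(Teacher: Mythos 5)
Your easy direction is fine, and your Cerf-theoretic middle step is sound as far as it goes: a generic path of height functions produces births/deaths (Type I and its reverse) and critical-value crossings, of which only max--min crossings matter, so you recover the Morse-position statement. But note that this is exactly Theorem~\ref{fundamental2} (\cite{S2009}), and the paper states Theorem~\ref{fundamental1} as a cited result of Birman \cite{B1976} and Hayashi \cite{H1998} with no proof, so the whole burden of your argument falls on your final step: showing that a Type II move (equivalently, passage through a thin sphere) is generated by Type I moves and their reverses, with all intermediate positions normalizable to bridge positions. That step is asserted, not proved, and it is precisely the hard content of the theorem. Concretely: (i) your appeal to ``the same innermost-loop and outermost-arc analysis used in the proof of Theorem~\ref{intersection}'' does not transfer, because that machinery requires incompressible and boundary-incompressible surfaces in an irreducible, boundary-irreducible manifold, whereas level spheres in $S^3$ and the strongly upper/lower disks guiding a Type II move are exactly the compressible/inessential objects excluded by those hypotheses; (ii) after you insert an auxiliary cancelling pair and perform the exchange ``inside the collar,'' you give no argument that the auxiliary pair still admits the strongly upper and strongly lower disks meeting in a point of $K$ that a cancelling Type I move requires --- the exchange can destroy exactly the disjointness/incidence structure you need; and (iii) you never explain how to remove the thin spheres of the intermediate Morse positions using only Type I moves and their reverses, which is a global, not local, problem.

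That the difficulty is genuinely global is suggested by the later literature cited in this survey: there exist locally minimal but not globally minimal bridge positions \cite{OT2013}, and even bridge positions with arbitrarily large gaps \cite{JKOT}, so no purely local surgery near each Cerf event can be expected to keep the family monotonically normalized. The actual proofs proceed by different, more structural routes: Birman \cite{B1976} works with plat representations and braid-theoretic normal forms, showing two plats of the same link become equivalent after stabilization, and Hayashi \cite{H1998} proves a Reidemeister--Singer-type stable equivalence theorem for Heegaard splittings of pairs $(S^3, K)$. Your outline would become a proof only if the stabilize--exchange--destabilize lemma were established in full, and that lemma is essentially equivalent to the theorem itself.
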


\begin{theorem}[Stably equivalent theorem for Morse positions \cite{S2009}]\label{fundamental2}
Two knots are equivalent if and only if their Morse positions are mutually moved by a sequence of Type I and II moves and their reverse moves.
\end{theorem}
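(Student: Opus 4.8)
The \emph{if} direction is immediate. Each Type I move, each Type II move, and each of their reverses is realized by an ambient isotopy of the pair $(S^3,K)$ — a cancellation or creation of a maximum--minimum pair in the first case, an interchange of two heights in the second — so passing between Morse positions by such moves never changes the oriented equivalence class of the knot. All the content lies in the \emph{only if} direction, and the plan is to promote an abstract equivalence between $K_0$ and $K_1$ into a finite sequence of these elementary moves.

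So suppose $K_0$ and $K_1$ are equivalent, each presented in Morse position with respect to $h$. Equivalence gives an ambient isotopy $\{\Phi_t\}_{t\in[0,1]}$ of $S^3$ with $\Phi_0=\mathrm{id}$ and $\Phi_1(K_0)=K_1$, and hence a path $K_t=\Phi_t(K_0)$ of embeddings joining the two positions. First I would put the associated one-parameter family of height functions $f_t=h|_{K_t}$ into general position as a family. For functions on the $1$-manifold $S^1$ the theory of generic one-parameter families shows that, after a small perturbation of $\{\Phi_t\}$ rel endpoints, $f_t$ is an excellent Morse function with distinct critical values for all parameters except finitely many $t_1<\cdots<t_k$, at each of which exactly one of the two codimension-one events occurs: either (i) a maximum and a minimum are born or die through a single degenerate ($A_2$) critical point, or (ii) two distinct critical values cross transversally while the function stays Morse. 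Between consecutive $t_j$ the combinatorial Morse data are constant, so it suffices to realize each event by the permitted moves.

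Event (i) is exactly a Type I move or its reverse. The isotopy provides the disk across which the pair cancels; because the two critical points coalesce, they occupy adjacent critical levels with no critical value between them, so this disk splits into a strongly upper and a strongly lower disk sharing the single point of $K$, which is precisely the data defining a Type I move. For event (ii) I would first observe that the two crossing values are adjacent in the height ordering at the crossing instant, so the swap involves consecutive critical points and the relevant disks are automatically strong. When the swapped points are a maximum and a minimum, the interchange is a Type II move as soon as one can arrange the strongly upper and strongly lower disks to be disjoint; the disks coming directly from the trace of the isotopy on the intervening level sphere may meet, and I would remove the intersections by an innermost and outermost disk argument on that sphere, exactly as in the reduction of Theorem~\ref{intersection}, replacing each offending intersection by a shorter isotopy until the two disks are embedded, strong, and disjoint.

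The hard part will be the remaining case of event (ii): a crossing of two maxima or of two minima, which is not literally a Type II move. Here the plan is to show such a reordering can always be factored through the permitted moves rather than avoided — for instance by a preliminary reverse Type I move that introduces a cancelling maximum--minimum pair, a sequence of genuine Type II interchanges that carries out the reordering using that pair, and a final Type I move that removes it — and to verify that throughout this replacement every disk can be kept embedded, strong, and mutually disjoint, again by the innermost/outermost disk technique underlying Theorem~\ref{intersection}. This book-keeping, ensuring that the geometrically-defined disk conditions survive each combinatorial manipulation, is the crux of the proof. Once it is settled, concatenating the moves produced at $t_1,\dots,t_k$ joins the Morse position of $K_0$ to that of $K_1$, which together with the easy direction completes the proof and parallels the corresponding statement for bridge positions in Theorem~\ref{fundamental1}.
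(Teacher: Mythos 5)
The paper itself offers no proof of this statement---it is quoted from Schultens \cite{S2009} (connectivity of the width complex)---so the comparison is with the cited argument. Your overall skeleton matches it: perturb the ambient isotopy so that $f_t=h|_{K_t}$ is an excellent Morse function except at finitely many generic events (births/deaths of max--min pairs and transversal crossings of critical values), then realize births/deaths as Type I moves and max--min crossings as Type II moves. One remark on your treatment of the max--min crossing: the innermost/outermost repair you invoke is unnecessary. By shrinking the time window around the event, the disks traced by the isotopy lie in small disjoint balls about the two (distinct) critical points, and since the two exchanged critical values are adjacent, with no other critical value in the slab, the disks are automatically strong; smallness, not surgery on intersections, is what gives embeddedness and disjointness.

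The genuine gap is in what you call the hard part, and your proposed fix points in the wrong direction. The create--exchange--cancel factorization for a crossing of two maxima requires dragging the auxiliary minimum past one of the maxima, i.e., performing a max--min interchange for which you must exhibit disjoint strongly upper and strongly lower disks; nothing in the given data supplies these, and producing them is exactly as hard as the statement being proved---the innermost/outermost technique of Theorem \ref{intersection} removes intersections between two given surfaces but cannot conjure an upper disk whose interior misses the rest of the knot. The standard resolution is that this case is a \emph{non-event}, not a hard case: at a transversal crossing of two critical values of the same type, $h|_{K_t}$ remains a Morse function throughout (two nondegenerate critical points merely share a value at one instant, and the paper's definition of Morse position does not require distinct critical values), and the sequence of level counts is unchanged (the counts around two adjacent maxima read $a$, $a-2$, $a-4$ both before and after the swap), so the positions immediately before and after the crossing represent the same Morse position and no move is needed. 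Only max--min crossings change the width data, and those are precisely the Type II moves. With this observation your outline closes up; as written, the crux case of your proof rests on an unproved, and in general unjustifiable, reduction.
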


\begin{definition}[\cite{G1987a}]
We define the {\em width} $w(K)$ of a knot $K$ as
\[
w(K)=\min_{K} \sum_{i=1}^{n-1}|S_{r_i}\cap K|,
\]
where $\min$ is taken over all Morse positions of $K$.
We say that a knot $K$ is in a {\em thin position} if it realizes $w(K)$.
And we call a Morse position which does not admit Type I and II moves a {\em locally thin position} (cf. \cite{HK2003}).
\end{definition}

The bridge number satisfies the additivity with respect to a connected sum of knots (\cite{Sch1954}, \cite{Sch2003}, cf. \cite{D1992}, \cite{S2014}), but the width does not satisfy the additivity (\cite{BT2013}).
In general, it holds that $w(K_1\# K_2)\ge \max \{w(K_1),w(K_2) \}$ (\cite{SS2006}).

\begin{definition}[\cite{O1998k}]
We define the height $ht(K)$ of a knot $K$ as
\[
ht(K)=\max_{K} \{|\{\text{thick sphere}\}|\},
\]
where $\max$ is taken over all thin positions of $K$.
\end{definition}

\begin{conjecture}\label{additive_height}
It holds that $ht(K_1\# K_2) = ht(K_1)+ht(K_2)$\footnote{In \cite{BO2017}, a counterexample was given.}.
\end{conjecture}

\begin{definition}[\cite{MOz2010}]
We define the {\em trunk} of a knot $K$ as
\[
trunk(K)=\min_{K} \max_{t\in \mathbb{R}} |h^{-1}(t)\cap K|,
\]
where $\min$ is taken over all Morse positions of $K$.
\end{definition}

\begin{conjecture}[\cite{MOz2010}]\label{additive_trunk}
It holds that $trunk(K_1\# K_2)=\max\{trunk(K_1),$ $trunk(K_2)\}$\footnote{This conjecture was solved affirmatively in \cite{DZ2016}.}.
\end{conjecture}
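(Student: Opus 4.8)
The plan is to establish the two inequalities $trunk(K_1\# K_2)\le \max\{trunk(K_1),trunk(K_2)\}$ and its reverse separately; the first is the routine direction and the second is where the real difficulty lies. For the upper bound I would take Morse positions of $K_1$ and $K_2$ that realize $trunk(K_1)$ and $trunk(K_2)$, rescale them so that $K_1$ lies entirely above a level sphere $S_0=h^{-1}(0)$ and $K_2$ lies entirely below it, and then form the connected sum by opening the global minimum of $K_1$ and the global maximum of $K_2$ into two monotone strands that cross $S_0$ transversely and join the two factors. For a level $t$ in the range of $K_1$ the level sphere then meets $K_1\# K_2$ in at most $trunk(K_1)$ points, for $t$ in the range of $K_2$ in at most $trunk(K_2)$ points, and in the connecting region in exactly two points. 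Since every knot satisfies $trunk\ge 2$, taking the maximum over all $t$ yields the desired upper bound.

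For the reverse inequality it suffices, after relabelling, to prove $trunk(K_1\# K_2)\ge trunk(K_1)$. I would fix a Morse position of $K=K_1\# K_2$ realizing $trunk(K)=:w$ and choose a decomposing $2$-sphere $P$ with $|P\cap K|=2$, so that $P$ splits $(S^3,K)$ into two $1$-string tangles; the side containing the essential arc $\tau_1$ of $K_1$ recovers $K_1$ once $\tau_1$ is capped off by a trivial arc $\gamma$ that is parallel into $P$. Putting $P$ in general position with respect to the sweep-out by level spheres via Theorem \ref{general position}, the intersection $P\cap h^{-1}(t)$ is a union of circles, and I would simplify $P$ rel $K$ by the innermost-disk and outermost-arc exchanges used in the proof of Theorem \ref{intersection}, so as to make $P$ meet the level spheres as efficiently as possible. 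The aim is to route the capping arc $\gamma$ so close to $P$ that, at every level, $|h^{-1}(t)\cap(\tau_1\cup\gamma)|\le w$; this would exhibit $K_1$ in a Morse position of width at most $w$ and give $trunk(K_1)\le w$.

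The hard part will be precisely this control of the capping arc against the Morse function. A connected-sum sphere cannot in general be made level or even monotone, so $P$ may oscillate through many critical levels, and the capping arc inherits that oscillation: a priori $\gamma$ contributes intersections with $h^{-1}(t)$ beyond those already counted along $\tau_1$, so the naive bound $|h^{-1}(t)\cap\tau_1|\le w$ does not suffice. The crux is to show that $P$, together with the arc carried on it, can always be chosen so that it produces no net increase in the maximal level-width. This is a delicate minimax statement about a single sphere interacting with a sweep-out, analogous to but more subtle than the width-estimate arguments of Scharlemann--Schultens (\cite{SS2006}), since one must bound the maximum over a single level rather than a sum. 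I expect this to require a thin-position argument on $P$ that trades its oscillations for cancellations in the intersection count, and it is exactly this step that keeps the statement at the level of a conjecture.
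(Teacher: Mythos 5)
You were asked to prove a statement that the paper itself presents as an open conjecture: the survey contains no proof of Conjecture \ref{additive_trunk}, and the only thing it records is the partial result that Conjectures \ref{additive_height} and \ref{additive_trunk} hold for meridionally small knots ({\cite[Theorem 1.8]{MOz2010}}). So there is no proof in the paper to compare yours against, and your submission, appropriately, does not claim to close the question. Your upper-bound half is correct and is the standard construction: stack trunk-realizing Morse positions of the factors on either side of a level sphere, open a minimum of $K_1$ and a maximum of $K_2$ into two monotone connecting strands, and check that each level meets $K_1\# K_2$ in at most $\max\{trunk(K_1),trunk(K_2)\}$ points, using $trunk\ge 2$ in the connecting region. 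Your diagnosis of the other half is also accurate: the obstruction is exactly that a decomposing sphere need not be levellable with respect to $h$, so capping off a factor along it can increase the maximal level width, and no innermost/outermost simplification in the spirit of Theorem \ref{intersection} is known to control a single-level maximum the way the Scharlemann--Schultens machinery (\cite{SS2006}) controls a summed width.

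One substantive comment on strategy: the known partial result does not attack the sphere directly against the sweep-out, as your sketch proposes, but goes through thin position. Roughly, one takes a Morse position of $K_1\# K_2$ that realizes the trunk and is locally thin, and invokes Theorem \ref{thin_tangle} (\cite{T1997}) together with the refinement that thin level spheres can be taken essential (\cite{W2004}); when both factors are meridionally small, the only available essential meridional planar surfaces are the decomposing spheres, which forces a thin level to be a decomposing sphere. A \emph{level} decomposing sphere makes your capping step trivial, and the lower bound $trunk(K_1\# K_2)\ge\max\{trunk(K_1),trunk(K_2)\}$ follows. The hypothesis of meridional smallness is precisely the substitute for the control you identify as missing: without it, a thin sphere may be an essential tangle decomposing sphere of the sum that is not a decomposing sphere, and your arc $\gamma$ cannot be routed without potentially raising the maximal level width. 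So your proposal correctly locates the open difficulty, but the step you flag is not a technical refinement awaiting execution; it is the conjecture itself.
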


Conjectures \ref{additive_height} and \ref{additive_trunk} hold for meridionally small knots, namely, knots without meridians as their boundary slopes ({\cite[Theorem 1.8]{MOz2010}}).

Let $S$ be a bridge decomposing sphere for a knot $K$ in bridge position, and $S$ divide $S^3$ into two $3$-balls $B_+, B_-$.
Denote the curve complex obtained from isotopy classes of essential and not boundary parallel loops in $S\cap E(K)$ by $\mathcal{C}(S)$.
And denote the subcomplex of $\mathcal{C}(S)$ obtained from isotopy classes of $\partial D$, where $D$ is an essential disk properly embedded in $B_{\pm}-K$ by $\mathcal{C}(B_{\pm})$.

\begin{definition}[\cite{BS2005}, \cite{T2007}, cf. \cite{J2014}, \cite{H2001}]
We define the {\em Hempel distance} of $K$ with respect to $S$ as
\[
d(K,S)=\min \{ d_{\mathcal{C}(S)}(x,y) | x\in \mathcal{C}(B_+), y\in \mathcal{C}(B_-)\},
\]
where $d_{\mathcal{C}(S)}$ denotes the distance in $\mathcal{C}(S)$.
\end{definition}

\subsection{Tunnel numbers ($E(K)\to \mathbb{R}_+$)}

When we have a handle decomposition (\cite{M2002}) of the exterior $E(K)$ of a knot $K$ as 
$$E(K)=N(\partial E(K); E(K)) \cup (1\text{-handles}) \cup (2\text{-handles}) \cup (3\text{-handle}),$$
we call the core of a $1$-handle an {\em unknotting tunnel}.
We define the {\em tunnel number} of $K$, denoted by $t(K)$, as the minimal number of unknotting tunnels (\cite{C1980}).
In general, the tunnel number satisfies that $t(K_1\# K_2)\le t(K_1)+t(K_2)+1$ with respect to the connected sum of knots (\cite{C1980}, cf. \cite{S2014}).
In this inequality, there exists an example satisfying the equality (\cite{MR1997}, \cite{MSY1996}), on the other hand, there exists an example having an arbitrarily large gap (\cite{K1994}, \cite{M1995}).
And it holds that $t(K_1\# K_2)\ge t(K_1)+t(K_2)$ for small knots and meridionally small knots which include them (\cite{MS2000}, \cite{M2000}).
It has been recently proved that $t(K_1\# K_2)\ge \max \{ t(K_1),t(K_2)\}$ in general (\cite{TS2014}).

Concerning the rank of the fundamental group and the Heegaard genus of closed $3$-manifolds $M$, it holds that $rank(\pi_1(M))\le g(M)$.
Waldhausen proposed a problem whether does the equality hold in this inequality.
This problem is true if $rank(\pi_1(M))=0$ since it is Poincar\'{e} conjecture, but in general, some examples which do not satisfy the equality (\cite{BZ1984}, \cite{SW2007}).
However, Waldhausen's problem is unsolved for the knot exteriors.

\begin{conjecture}[The rank versus genus conjecture, cf. {\cite[Question 2]{L2013}}]\label{rank_genus}
It holds that $rank(\pi_1(E(K)))=t(K)+1$.
\end{conjecture}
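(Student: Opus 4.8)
Because the exterior satisfies $g(E(K))=t(K)+1$, this statement is Waldhausen's equality $rank(\pi_1(E(K)))=g(E(K))$ specialised to knot exteriors, and the plan is to split it into two inequalities. One of them is automatic: a genus $g$ Heegaard splitting $E(K)=C\cup_S H$ with $H$ a handlebody expresses $\pi_1(E(K))$ as a quotient of the free group $\pi_1(H)$ of rank $g$, so $rank(\pi_1(E(K)))\le g(E(K))=t(K)+1$. Everything therefore reduces to the reverse inequality $rank(\pi_1(E(K)))\ge t(K)+1$, that is, to showing that no algebraic generating set of $\pi_1(E(K))$ can be strictly smaller than the geometric Heegaard genus.

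My first step would be to try to promote an efficient generating set to a geometric one. Take a generating set of $\pi_1(E(K))$ of minimal cardinality $n=rank(\pi_1(E(K)))$. Since $E(K)$ is aspherical for nontrivial $K$ and $\chi(E(K))=0$, the knot group has deficiency one, and one would like a presentation with $n$ generators and $n-1$ relators (already a delicate point: matching the relator count to the rank is the relation-gap problem, not a formality). The target is then to realise this presentation by a genus-$n$ splitting: thicken a spine carrying the generators into a genus-$n$ handlebody, attach two-handles along the relators, and check that what remains is a compression body with $\partial_-=\partial E(K)$. At this stage I would bring in the features the survey stresses — the single torus boundary, the meridian, and above all the essential surfaces produced by the essential transversality theorem (Theorem~\ref{intersection}) — to constrain the generating set and the handle structure simultaneously.

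The hard part will be exactly this realisation step, and it cannot be handled softly: for general three-manifolds the rank can be strictly smaller than the genus (\cite{BZ1984}, \cite{SW2007}), so any valid argument must invoke properties of knot exteriors that those counterexamples do not share. To locate the first genuine difficulty I would attempt to prove that every knot with $rank(\pi_1(E(K)))=2$ has tunnel number one; such a group is a two-generator one-relator group by the deficiency remark, and the problem becomes converting that presentation into a genus-two splitting realising an unknotting tunnel, where the incompressible surfaces obstructing the conversion must be controlled. As a complementary and more tractable entry point, I would first settle the conjecture for meridionally small knots, whose peripheral structure and essential surfaces are already understood and for which the tunnel number is well behaved (\cite{MS2000}, \cite{M2000}); a proof there would both supply evidence and indicate which surface-theoretic invariant measures the rank–genus gap in the general case (cf. \cite{L2013}).
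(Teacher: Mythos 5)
You are attempting to prove a statement that the paper records as an open conjecture (Conjecture~\ref{rank_genus}): the survey explicitly notes that Waldhausen's rank-versus-genus question ``is unsolved for the knot exteriors,'' so there is no proof in the paper to compare against, and a correct review must judge your text as a research plan rather than a proof. As a plan it is honestly framed --- the inequality $rank(\pi_1(E(K)))\le t(K)+1$ is indeed immediate from a minimal Heegaard splitting, and you rightly observe that the counterexamples of \cite{BZ1984} and \cite{SW2007} forbid any argument that does not use special features of knot exteriors --- but the genuine gap is that every step carrying actual content is deferred. The ``realisation step'' (promoting an $n$-element generating set to a genus-$n$ splitting) is precisely the open problem itself, and you supply no mechanism for it: the essential transversality theorem (Theorem~\ref{intersection}) controls intersections of incompressible surfaces but gives no way to geometrize group elements, and the deficiency-one remark founders on the relation-gap issue you yourself flag. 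Even your proposed ``first genuine difficulty'' --- showing every knot of rank $2$ has tunnel number one --- is open; the paper records only the special case of cabled knots (\cite{B1994}) and an expectation for $2$-generator satellite knots ({\cite[Corollary 7]{BW2005}}). Likewise, the conjecture is not known for meridionally small knots; the superadditivity results \cite{MS2000}, \cite{M2000} concern tunnel number under connected sum, not rank, so your ``tractable entry point'' is also unproven territory.

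In short: nothing in the proposal is wrong, and the decomposition into two inequalities with the soft direction settled is the standard and correct framing, but the proposal proves only the trivial half. You should state explicitly that the reverse inequality is conjectural and not claim, even implicitly, that the outlined program closes it; as written, the text reads as a proof sketch of a theorem when it is a strategy document for an open problem.
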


For a $2$-generator knot, namely, a knot satisfying $rank(\pi_1(E(K)))=2$, if it is a cabled knot, then $t(K)=1$ (\cite{B1994}).
It is expected that a $2$-generator satellite knot is either tunnel number one or some satellite knot, and the latter does not exist ({\cite[Corollary 7]{BW2005}}, cf. \cite{BJ2004}).

In connection with Conjecture \ref{rank_genus}, there is a next conjecture.

\begin{conjecture}[The meridional rank versus bridge number conjecture, {\cite[Problem 1.11]{K1995}}, cf. \cite{BJ2004}]\label{meridian_bridge}
The minimum number of meridional generators of $\pi_1(E(K))$ is equal to the bridge number of $K$.
\end{conjecture}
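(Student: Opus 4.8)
Write $m(K)$ for the meridional rank of $K$, that is, the minimum number of meridional elements needed to generate $\pi_1(E(K))$; the conjecture asserts $m(K)=b(K)$. The plan is to attack the two inequalities separately, and I would first dispose of the accessible half $m(K)\le b(K)$. Put $K$ in a minimal bridge position with bridge decomposing sphere $S$ separating $S^3$ into $B_+\cup B_-$, so that the $b(K)$ maxima lie in $B_+$. Since $K\cap B_+$ consists of $b(K)$ trivial arcs, the complement $B_+-N(K)$ has free fundamental group generated by $b(K)$ meridians, one per overbridge. Reading a Wirtinger-type presentation adapted to this decomposition and gluing in $B_--N(K)$ along the punctured sphere $S-N(K)$, the resulting relations express every remaining meridian as a word in these $b(K)$ generators. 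Hence $\pi_1(E(K))$ is generated by $b(K)$ meridians and $m(K)\le b(K)$; this direction is classical.

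The content of the conjecture is the reverse inequality $m(K)\ge b(K)$, and this is the step I expect to be the genuine obstacle: it is open in general and established only for restricted families (for instance torus knots, generalized Montesinos knots, and certain satellite and iterated-torus knots). The most systematic strategy, and the one I would pursue, is representation-theoretic. Given a minimal meridional generating set $\{x_1,\dots,x_m\}$ with $m=m(K)$, one seeks a homomorphism $\rho\colon\pi_1(E(K))\to G$ onto a suitable target group (dihedral, Coxeter, or more general reflection group) in which each meridian maps to a reflection, and then bounds the minimal number of reflections generating the image $\rho(\pi_1(E(K)))$ from below by $b(K)$. Because the images of the $x_i$ are reflections, the reflection rank of $\rho(\pi_1(E(K)))$ is at most $m(K)$; so if one can always arrange a target whose reflection rank is at least $b(K)$, the conjecture follows. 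The hard part is precisely the existence of such a $\rho$ for an arbitrary $K$: the known cases succeed because a branched-covering or Seifert-fibered structure supplies a canonical $G$, whereas no such construction is available in general.

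A complementary geometric line, closer to the surface techniques of this article, would instead try to realize a minimal meridional generating system by $m(K)$ disjointly embedded meridian disks and then isotope the resulting arc presentation into an honest bridge position of the same complexity, using the essential transversality theorem (Theorem \ref{intersection}) to control how these disks meet an essential surface or a thin level sphere of $K$. Here the obstruction is that a meridional generating set need not be geometrically realizable as a disjoint family of bridge arcs, so the reduction to bridge position can break down; organizing the handle slides and cancellations that would clean up such a system without increasing the number of generators is the crux, and it is exactly this geometric realization problem that keeps the conjecture open.
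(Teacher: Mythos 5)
The statement you were asked about is a \emph{conjecture}: the paper states it without proof (it is Problem 1.11 in Kirby's problem list \cite{K1995}) and only records the families for which it has been verified, namely $2$-bridge knots \cite{BZ1989}, Montesinos knots \cite{BZ1985}, torus knots \cite{RZ1984}, and cable knots of torus knots \cite{BJW2014}. So there is no proof in the paper to compare yours against, and your proposal correctly refrains from claiming one.

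Your assessment is accurate on both halves. The inequality $m(K)\le b(K)$ is indeed classical and your argument for it is sound: in a minimal bridge position the $b(K)$ overbridge meridians generate $\pi_1$ of the upper ball complement freely, and van Kampen's theorem applied along the punctured bridge sphere expresses all other meridians in terms of them. Your identification of $m(K)\ge b(K)$ as the genuinely open content, together with the representation-theoretic strategy (sending meridians to reflections in a dihedral or Coxeter quotient and bounding the reflection rank of the image from below), matches how the known cases cited in the paper were actually settled --- Boileau--Zimmermann's $\pi$-orbifold group argument for $2$-bridge knots is exactly of this type, as are the Montesinos and torus knot cases --- and your diagnosis of the obstruction (a minimal meridional generating set need not be geometrically realizable as a disjoint family of bridge arcs, so no general reduction to bridge position is available, cf.\ \cite{BJ2004}) is the correct explanation of why the conjecture remains open. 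In short, your proposal is an accurate account of the status of an open problem rather than a proof, which is all that can honestly be offered for this statement.
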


Conjecture \ref{meridian_bridge} holds for $2$-bridge knots (\cite{BZ1989}), Montesinos knots (\cite{BZ1985}), torus knots (\cite{RZ1984}), cable knots of torus knots (\cite{BJW2014}).
And if for a knot whose $2$-fold branched cover is a graph manifold, $\pi_1(E(K))$ is generated by the $3$ meridians, then it is a $3$-bridge knot (\cite{BJW2014}).
A new approach to Conjecture \ref{meridian_bridge} was given in \cite{BKVV2017}.

\subsection{Morse--Novikov number ($E(K)\to S^1$)}\label{Morse--Novikov}

When we have a handle decomposition of the exterior $M = E(K) - int (F \times [0,1])$ of a Seifert surface $F$ as 
$$M = N(F \times \{0\}) \cup (1\text{-handles}) \cup (2\text{-handles}) \cup N(F \times \{1\}),$$
we define $h(F)$ as the minimum number of $1$-handles.
For a knot $K$, we call the minimum number of $h(F)$ among all Seifert surfaces $F$ the {\em handle number} of $K$, and denoted by $h(K)$ (\cite{G1992}).
Knots with $h(K)=0$ are called fibered knots.
On the other hand, for a Morse function $E(K)\to S^1$ satisfying some conditions, the minimum number of critical points is called a {\em Morse--Novikov number}, and denoted by $MN(K)$ (\cite{PRW2002}).
For the handle number and the Morse--Novikov number, it holds that $MN(K)=2h(K)$ (\cite{GP2005}).

\begin{conjecture}[\cite{PRW2002}]
It holds that $MN(K_1\# K_2) = MN(K_1)+MN(K_2)$.
\end{conjecture}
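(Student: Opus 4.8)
The plan is to reduce the conjecture to the additivity of the handle number. Using the identity $MN(K)=2h(K)$ recorded in Section \ref{Morse--Novikov}, the statement $MN(K_1\#K_2)=MN(K_1)+MN(K_2)$ is equivalent to $h(K_1\#K_2)=h(K_1)+h(K_2)$, and I would prove the two inequalities separately, since the subadditivity is routine while the superadditivity carries all of the difficulty.

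For the upper bound $h(K_1\#K_2)\le h(K_1)+h(K_2)$, I would start from Seifert surfaces $F_1,F_2$ realizing $h(K_1)$ and $h(K_2)$, together with handle decompositions of $E(K_i)-\mathrm{int}(F_i\times[0,1])$ of the prescribed form. The boundary connected sum $F=F_1\natural F_2$ is a Seifert surface for $K_1\#K_2$, and the connected-sum sphere lets me glue the two complementary handle decompositions along a neighborhood of the decomposing disk. After absorbing the gluing region into the collars $N(F\times\{0\})$ and $N(F\times\{1\})$, the $1$-handles of the two sides simply accumulate, yielding $h(F)\le h(F_1)+h(F_2)$ and hence the desired inequality.

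The hard direction is $h(K_1\#K_2)\ge h(K_1)+h(K_2)$. I would fix a Seifert surface $F$ for $K_1\#K_2$ realizing the handle number, equipped with a minimal handle decomposition, and introduce the decomposing sphere $S$ of the connected sum, meeting $K_1\#K_2$ transversely in two points so that $A=S\cap E(K_1\#K_2)$ is an essential meridional annulus. Applying Theorem \ref{general position} and Theorem \ref{intersection}, I would put $A$ in essential position against $F$ and against the cores and co-cores of the $1$- and $2$-handles, arranging that $A\cap F$ reduces to a single essential arc (consistent with additivity of the Seifert genus). Cutting $E(K_1\#K_2)$ along $A$ then splits the data into two complementary pieces corresponding to $E(K_1)$ and $E(K_2)$, each inheriting a handle decomposition of the required type, and counting $1$-handles on the two sides gives the inequality.

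The main obstacle is precisely this final compatibility step: making $S$, equivalently $A$, cut the handle decomposition cleanly, so that no $1$-handle straddles the decomposing sphere and the handles partition honestly between the two summands. This is the same phenomenon that obstructs full additivity of the tunnel number, where only $t(K_1\#K_2)\ge\max\{t(K_1),t(K_2)\}$ is known in general. Controlling how a minimal circle-valued Morse function interacts with the decomposing sphere, and ruling out cancellations that would lower the handle count after cutting, is where the real work lies; a genuinely new ingredient — presumably a sutured-manifold hierarchy or a thin-position argument adapted to the circle-valued setting — appears to be required here.
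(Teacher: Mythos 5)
There is no proof in the paper to compare against: the statement you were asked to prove is presented there as an open conjecture (attributed to \cite{PRW2002}), and the paper records only a partial result, namely that the handle number and the Morse--Novikov number are additive under connected sum for almost small knots (\cite{MG2013}). Your reduction via $MN(K)=2h(K)$ (\cite{GP2005}) to additivity of the handle number is correct, and your subadditivity sketch $h(K_1\#K_2)\le h(K_1)+h(K_2)$ by boundary connected sum of Seifert surfaces is the standard, unproblematic direction. But your superadditivity argument is not a proof, and you effectively concede this in your final paragraph.

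The concrete gap is in the cutting step. Theorem \ref{intersection} (essential transversality) only lets you arrange that $A\cap F$ is essential in both the meridional annulus $A$ and the Seifert surface $F$; it gives no control whatsoever over how $A$ meets the $1$- and $2$-handles of the decomposition of $E(K_1\#K_2)-\mathrm{int}(F\times[0,1])$. Nothing forces a handle-number-minimizing Seifert surface $F$ to split as a boundary connected sum along $A$ (such an $F$ need not have minimal genus, so genus additivity does not apply, and swallow-follow phenomena can occur), and even if it did split, handles can straddle $A$, and cutting along $A$ can force the creation of new handles on each side, so the $1$-handle counts need not partition between the summands. This is exactly where the conjecture is open. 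Worse, the tunnel number analogy you invoke points the wrong way: superadditivity $t(K_1\#K_2)\ge t(K_1)+t(K_2)$ is in fact false in general --- the paper cites examples with arbitrarily large degeneration under connected sum (\cite{K1994}, \cite{M1995}) --- so ``the handles partition honestly between the two summands'' is a phenomenon that genuinely fails for the nearest cousin of this problem. Any correct approach must exploit structure special to the circle-valued or sutured setting, as \cite{MG2013} does for almost small knots; your sketch does not supply that ingredient.
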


The handle number and the Morse--Novikov number are additive with respect to the connected sum of almost small knots (\cite{MG2013}).
It also holds that $MN(K)\le 2t(K)$ (\cite{P2010}).

\subsection{Regular diagrams ($(S^3,K)\to S^2$)}

Concerning principal classes of knots obtained from regular diagrams, there is an inclusion relation as follows (cf. \cite{O2011}, \cite{MO2010}).

\begin{figure}[htbp]
\begin{center}
\includegraphics[trim=0mm 0mm 0mm 0mm, width=.6\linewidth]{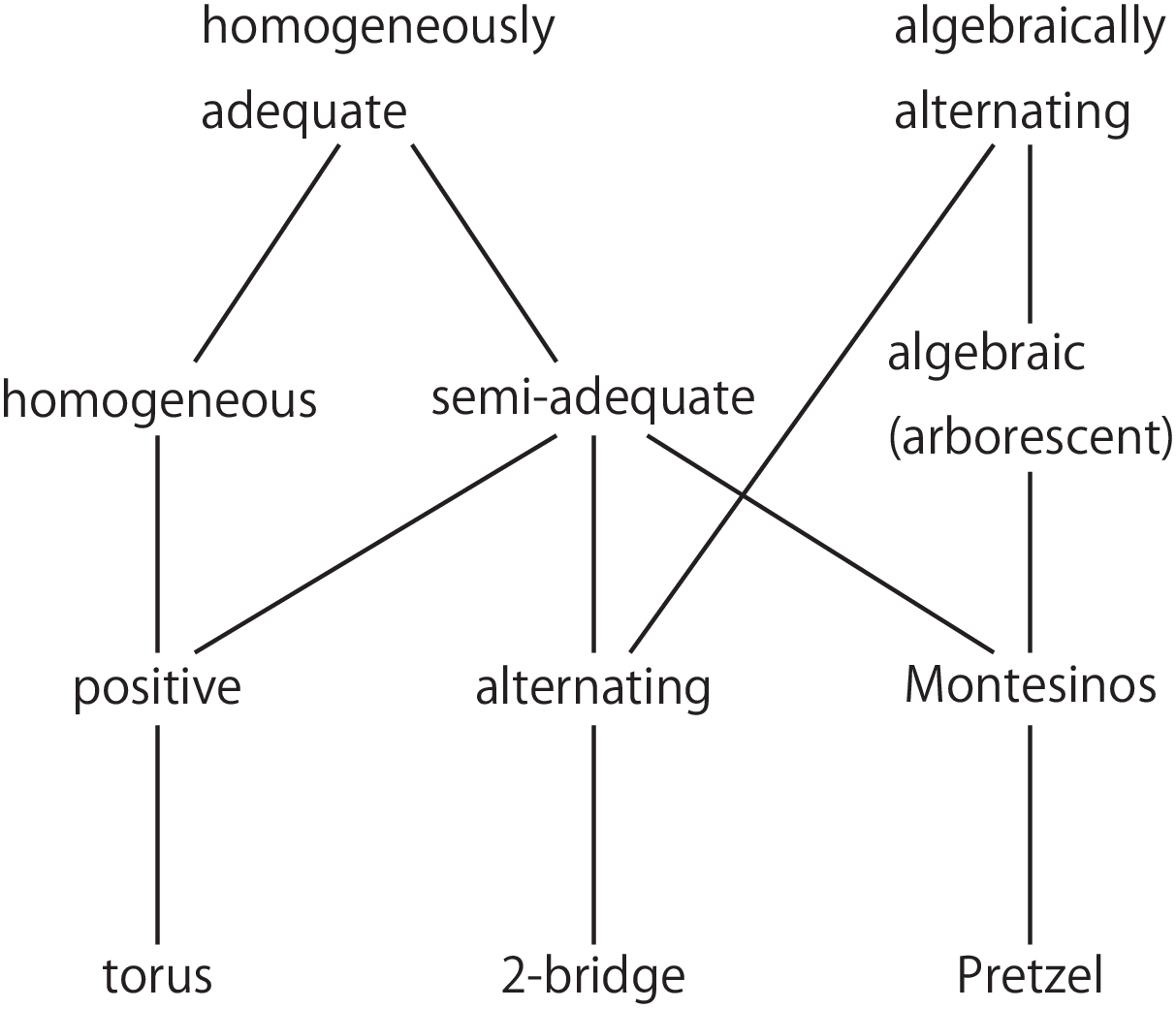}
\caption{A Hasse diagram of regular diagrams of knots}
\label{}
\end{center}
\end{figure}

\section{Fundamental theorems on knots}

The next knot complement theorem shows that embeddings of a knot exterior into the $3$-sphere are unique.

\begin{theorem}[Knot complement theorem \cite{GL1989}]
Two knots are equivalent if and only if their complements are orientation-preserving homeomorphic\footnote{Two prime knot complements are homeomorphic if and only if their fundamental groups are homomorphic.}.
\end{theorem}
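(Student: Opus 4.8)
The forward implication is immediate from the definition of equivalence: an orientation-preserving self-homeomorphism of $S^3$ carrying $K_1$ to $K_2$ restricts to an orientation-preserving homeomorphism $S^3-K_1\to S^3-K_2$, and, after shrinking tubular neighborhoods, to a homeomorphism of exteriors $E(K_1)\to E(K_2)$. The entire content lies in the converse, so suppose $f\colon E(K_1)\to E(K_2)$ is an orientation-preserving homeomorphism. It restricts to a homeomorphism of boundary tori $\partial E(K_1)\to\partial E(K_2)$, and one recovers $S^3$ from each $E(K_i)$ by Dehn filling along the meridional slope $\mu_i\subset\partial E(K_i)$, i.e.\ by attaching a solid torus whose meridian disk bounds $\mu_i$. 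The plan is to show $f(\mu_1)=\mu_2$ as slopes; granting this, $f$ extends across the two filling solid tori (meridian disk to meridian disk) to an orientation-preserving self-homeomorphism of $S^3$ taking $K_1$ to $K_2$, which is exactly equivalence.

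To establish $f(\mu_1)=\mu_2$, note that $f$ identifies the Dehn filling of $E(K_2)$ along $f(\mu_1)$ with $E(K_1)(\mu_1)=S^3$; hence $f(\mu_1)$ is a slope on $\partial E(K_2)$ whose filling yields $S^3$. If $K_2$ is the unknot, then $E(K_2)$ is a solid torus, so $E(K_1)$ is too and both knots are trivial, hence equivalent; so assume $K_2$ nontrivial. Everything then reduces to the following assertion of Gordon--Luecke, which I would prove: for a nontrivial knot $K$, the meridian $\mu$ is the \emph{only} slope on $\partial E(K)$ whose Dehn filling is $S^3$; equivalently, no nontrivial Dehn surgery on a nontrivial knot yields $S^3$. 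This forces $f(\mu_1)=\mu_2$ and finishes the proof.

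I would prove the assertion by contradiction. Suppose a slope $\gamma\neq\mu$ on $T=\partial E(K)$ satisfies $E(K)(\gamma)\cong S^3$. Write $M=E(K)$; the filling solid torus has core $K'$, and $M$ is simultaneously the exterior of $K$ (meridian $\mu$) and of $K'$ (meridian $\gamma$), each sitting inside a copy of $S^3$. Following Gabai's thin-position technology, I would place $K$ and $K'$ in thin position with respect to the standard Morse functions on the respective $3$-spheres and take thin level spheres; restricted to $M$ these become planar surfaces $P$ and $Q$ in $M$ with $\partial P$ of slope $\mu$ and $\partial Q$ of slope $\gamma$, so their boundaries meet $T$ in $\Delta(\mu,\gamma)\ge 1$ points on each circle. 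After putting $P,Q$ in general position (Theorem \ref{general position}) and minimizing $|P\cap Q|$, the intersection is a pair of graphs $G_P\subset P$, $G_Q\subset Q$ whose vertices are the boundary circles (the points where the level spheres met $K$ and $K'$) and whose edges are the arcs of $P\cap Q$; loop components inessential in $S^3$ are eliminated using Theorem \ref{3-sphere}.

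The decisive and by far hardest step is the combinatorial analysis of the pair $(G_P,G_Q)$, and this is where I expect the real obstacle to lie. The tools are the \emph{parity rule} governing how an edge is labelled at its two endpoints in $G_P$ versus $G_Q$; the thin-position hypothesis, which forbids edges from closing up into strongly upper/lower disk configurations and thereby forces the graphs to carry many edges; and the absence of essential surfaces in $S^3$ (Theorem \ref{3-sphere}), which rules out the surfaces that certain subconfigurations would produce. One locates \emph{Scharlemann cycles} and \emph{great webs} in $G_Q$ and transports them through the parity rule into $P$, producing an essential annulus, torus, or a reducing configuration incompatible with $P$ being a thin level sphere in $S^3$. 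A delicate Euler-characteristic and counting argument on the two graphs---the genuinely deep part of Gordon--Luecke's work---yields a contradiction with either the minimality of the thin position or the irreducibility of $S^3$. This forces $\gamma=\mu$, proving the assertion and hence the theorem; every step outside this combinatorial core is formal reduction.
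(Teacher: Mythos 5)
Your proposal takes essentially the same approach the paper itself describes: it reduces the theorem to the assertion that only the meridional Dehn filling of a nontrivial knot exterior yields $S^3$, and then invokes Gabai's thin position together with the Gordon--Luecke combinatorial analysis of the graphs formed by two intersecting planar level surfaces in the common exterior, which is precisely the strategy the paper sketches in its introduction before deferring to \cite{GL1989}. The paper gives no more detail than this---the deep graph-theoretic core is cited, not reproduced---so your outline, including its honest identification of where the real difficulty lies, matches the paper's treatment.
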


A knot is the {\em trivial knot} only if the knot exterior contains an essential disk.
A knot is said to be {\em satellite} if its exterior contains an essential torus.
A {\em torus knot} is characterized as a knot whose exterior contains an essential annulus and does not contain an essential torus (cf. Theorem \ref{classify_annuli}).

\begin{theorem}[Knot hyperbolization theorem \cite{T1982}]
Any knot is either the trivial knot, a torus knot, a satellite knot, or a hyperbolic knot.
\end{theorem}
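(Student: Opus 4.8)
The plan is to reduce the statement to Thurston's geometrization theorem for Haken manifolds, organizing the argument around the trichotomy of essential surfaces (disk, torus, annulus) that is exactly recorded in the characterizations of trivial, satellite, and torus knots stated just above. First I would record that the exterior $E(K)$ is a compact orientable $3$-manifold whose boundary is a single torus, and that it is irreducible: any $2$-sphere $S$ properly embedded in $E(K)$ bounds two balls in $S^3$ by the Schoenflies theorem, and since $K$ is connected it lies in one of them, so the other is a ball in $E(K)$ with boundary $S$; hence $S$ is inessential and $E(K)$ is irreducible.

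Next I would split off the trivial-knot case by examining boundary-irreducibility. If $\partial E(K)$ is compressible, then $E(K)$ contains an essential disk $D$; capping $\partial D$ off in $S^3$ shows that $K$ bounds a disk, so $K$ is the trivial knot. I may therefore assume $E(K)$ is irreducible and boundary-irreducible. In this case $E(K)$ is Haken: a minimal-genus Seifert surface for $K$ is two-sided and incompressible, so $E(K)$ contains an essential surface.

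With $E(K)$ Haken, I would apply the essential-surface trichotomy. If $E(K)$ contains an essential torus, then $K$ is a satellite knot by definition. If $E(K)$ contains an essential annulus but no essential torus, then by the classification of annuli (Theorem \ref{classify_annuli}) $K$ is a torus knot. It remains to treat the case that $E(K)$ is atoroidal and anannular. Here I would invoke Thurston's hyperbolization theorem for Haken manifolds: an atoroidal Haken $3$-manifold is either hyperbolic or a small Seifert fibered space; but a Seifert fibered knot exterior is a torus knot exterior, which contains an essential (cabling) annulus, contradicting anannularity. Hence the interior of $E(K)$ admits a complete finite-volume hyperbolic structure, and $K$ is a hyperbolic knot.

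The hard part is entirely Thurston's hyperbolization theorem \cite{T1982}, whose analytic and geometric content lies far beyond the elementary cut-and-paste techniques underlying Theorem \ref{3-sphere}; in a survey of this kind it must be cited rather than proved. The remaining work---irreducibility, the Haken property, and the exhaustiveness of the four cases via the disk/torus/annulus trichotomy---is routine once these characterizations are in hand, the only point requiring care being the confirmation that excluding essential annuli genuinely rules out the Seifert fibered alternative in the geometrization dichotomy.
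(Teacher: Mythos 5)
Your proposal is correct and matches the paper's treatment: the paper, being a survey, gives no proof at all and simply cites Thurston \cite{T1982}, relying implicitly on exactly the characterizations you use (trivial knot via an essential disk, satellite knot via an essential torus, torus knot via an essential annulus with no essential torus, cf.\ Theorem \ref{classify_annuli}). Your filling-in of the routine reduction---irreducibility, the Haken property via a minimal-genus Seifert surface, and ruling out the Seifert fibered alternative by anannularity---is sound and is precisely the standard argument the paper leaves unstated.
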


By Theorem \ref{distance}, if $d(K,F)\ge 3$ with respect to a bridge decomposing sphere $F$, then $K$ is a hyperbolic knot.

We say that for a sphere $S$ which intersects a knot $K$ in $2$ points transversely, $S$ is a {\em decomposing sphere} for $K$ if the annulus $S\cap E(K)$ is essential.
Then, $K$ is decomposed into two knots $K_1$ and $K_2$.
Conversely, we say that $K$ is obtained by a {\em connected sum} of $K_1$ and $K_2$, and denoted by $K=K_1\# K_2$.
We say that a knot $K$ is {\em prime} if $K$ is not the trivial knot and does not have a decomposing sphere.
A theorem similar to the uniqueness of prime factorization for the integers also holds for the knots.

\begin{theorem}[Uniqueness of prime decompositions \cite{S1949}, cf. \cite{H1958}]
Every non-trivial knot is uniquely decomposed into a connected sum of prime knots.
\end{theorem}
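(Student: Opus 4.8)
The plan is to establish existence and uniqueness separately, using the essential transversality theorem (Theorem \ref{intersection}) as the principal tool for the uniqueness half.

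For \emph{existence}, I would argue by a complexity bound. Given a non-trivial knot $K$, either $K$ is prime and there is nothing to prove, or $K$ admits a decomposing sphere and splits as $K = K_1 \# K_2$ with both $K_i$ non-trivial; I then recurse on each factor. To see that this terminates, I would invoke the additivity of the bridge number, $b(K_1 \# K_2) = b(K_1) + b(K_2) - 1$, together with the fact that every non-trivial knot has bridge number at least $2$. These give $b(K) \ge k+1$ for any expression $K = P_1 \# \cdots \# P_k$ into non-trivial factors, so the number of prime factors is bounded by $b(K)-1$ and the decomposition process halts, yielding $K = P_1 \# \cdots \# P_k$ with each $P_i$ prime. (Additivity of the Seifert genus would serve equally well.)

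For \emph{uniqueness}, suppose $K = P_1 \# \cdots \# P_k = Q_1 \# \cdots \# Q_l$. I would realize the two decompositions by two families $\mathcal{A} = \{A_1,\dots,A_{k-1}\}$ and $\mathcal{B} = \{B_1,\dots,B_{l-1}\}$ of mutually disjoint essential annuli properly embedded in $E(K)$, each $A_i = S_i \cap E(K)$ arising from a decomposing sphere $S_i$ and having both boundary curves meridional on $\partial E(K)$. Since $K$ is non-trivial, $E(K)$ is irreducible and boundary irreducible, so Theorem \ref{intersection} applies: after an isotopy I may assume every component of $(\bigcup_i A_i) \cap (\bigcup_j B_j)$ is essential in each annulus containing it. Arranging the meridional boundary curves to be disjoint on the torus $\partial E(K)$ kills all arc components, so only loops survive, and an essential loop in an annulus is parallel to its core.

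The heart of the argument, and the step I expect to be the main obstacle, is the removal of these core-parallel intersection loops: such a loop bounds no disk in the annulus, so the innermost-disk move used in the proof of Theorem \ref{intersection} does not apply directly. Instead I would take a loop cutting off an outermost sub-annulus on some $B_j$ and combine it with the sub-annulus it co-bounds on the corresponding $A_i$ to perform a cut-and-paste exchange across the two sub-annuli, strictly lowering the number of intersection loops; irreducibility of $E(K)$ supplies the balls needed for the isotopy. Iterating makes $\mathcal{A}$ and $\mathcal{B}$ disjoint, and then two disjoint systems of non-parallel decomposing spheres cut $(S^3,K)$ into the same collection of summand pieces. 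The remaining delicate point is to check that the resulting matching of pieces is forced — that no summand can be shared between non-corresponding pieces — which follows from the primeness of each factor: a sphere lying inside a prime piece must be trivial there. This gives $k=l$ and, after reordering, $P_i$ equivalent to $Q_i$ for every $i$.
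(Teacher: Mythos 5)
The paper itself offers no proof of this statement: it is quoted as a classical theorem with citations to Schubert \cite{S1949} and Hashizume \cite{H1958}, so your attempt can only be measured against the standard published arguments, and your overall plan is indeed that standard route. The existence half is correct as written: splitting along decomposing spheres terminates because any expression of $K$ as a connected sum of $k$ non-trivial knots forces $b(K)\ge k+1$; note, though, that additivity of the bridge number is itself a hard theorem of Schubert \cite{Sch1954}, so the parenthetical alternative (additivity of Seifert genus) is the customary and more elementary choice. The first stage of your uniqueness half is also fine: for non-trivial $K$ the exterior is irreducible and boundary irreducible, Theorem \ref{intersection} applies to the two systems $\mathcal{A}$, $\mathcal{B}$ of essential meridional annuli, disjoint meridional boundaries kill arc components, and essential loops in annuli are core-parallel.

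The genuine gap is the exchange step, and you have correctly identified its location but mischaracterized its nature: the cut-and-paste of $A_i$ across an outermost sub-annulus $B'\subset B_j$ is \emph{not} an isotopy, and irreducibility of $E(K)$ does not supply "the balls needed" for one. The new annulus $A''\cup B'$ caps off to a $2$-punctured sphere that in general is not isotopic to $S_i$; it may be inessential (it can cut off a ball meeting $K$ in an unknotted arc), and even when essential it may realize a different splitting of $K$. What must actually be proven is that one of the two possible exchanges (using $A'$ or $A''$) yields a sphere system still realizing the factorization $P_1\#\cdots\# P_k$ — this is where primeness and genus additivity do real work, exactly as in the swap analysis of the Haken--Milnor uniqueness proof for prime decompositions of $3$-manifolds (cf. \cite{AH1}), and your sketch does not close it. Your final matching step is similarly underdeveloped: once $\mathcal{A}$ and $\mathcal{B}$ are disjoint, their union does not cut $(S^3,K)$ into "the same collection of summand pieces"; rather, each $B$-sphere lying in the piece carrying the prime factor $P_i$ must, by primeness, cut off a ball meeting $K$ in an unknotted arc, hence is parallel rel $K$ to an adjacent $A$-sphere or to the boundary of the piece, and $k=l$ with a factor bijection then follows by a counting or induction argument on this parallelism. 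In short, the skeleton is the right one, but the two steps you flagged as delicate \emph{are} the theorem, and as written the proposal proves neither.
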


In general, there can exist infinitely many essential surfaces of arbitrarily large genus in a knot exterior (\cite{MSS2006}, \cite{W2008}, \cite{K1991}, \cite{HL1971}, \cite{RG1981}, \cite{RG1994}, \cite{OT2003}), but if we restrict the genus, then the finiteness follows.

\begin{theorem}[\cite{Oer2002}, cf. \cite{JO1984}]\label{finiteness}
In a hyperbolic knot exterior, for any natural number $N$, the number of essential surfaces of genus less than or equal to $N$ is finite.
\end{theorem}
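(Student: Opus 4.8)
The plan is to reduce the statement to a finiteness property for lattice points in finitely many cones, via branched surface theory. First I would invoke the Floyd--Oertel theorem (\cite{FO1984}): there is a finite collection of branched surfaces $B_1,\dots,B_n$ in $E(K)$ such that every essential surface is fully carried by some $B_i$. It therefore suffices to bound, for each fixed $B=B_i$, the number of carried essential surfaces of genus at most $N$. The surfaces carried by $B$ are exactly those assembled from nonnegative integer weights on the branch sectors that satisfy the branch (matching) equations; these weight vectors form a rational polyhedral cone $C\subset \mathbb{R}^k_{\ge 0}$, finitely generated by fundamental integral vectors $v_1,\dots,v_m$, and each carried surface $F(\vec w)$ corresponds to an integral point $\vec w\in C$.

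The key elementary observation is that the complexity of $F(\vec w)$ depends linearly on $\vec w$. Summing the Euler characteristics of the sectors and correcting along the branch locus gives a linear functional $L$ with $-\chi(F(\vec w))=L(\vec w)$, while the boundary curves of $F(\vec w)$ on $\partial E(K)$ are counted by a second linear functional $\beta(\vec w)=|\partial F(\vec w)|$. Since a connected surface of genus $g$ with $b$ boundary components has $-\chi=2g+b-2$, the genus is the affine-linear expression $g(\vec w)=\tfrac12\,(L(\vec w)-\beta(\vec w))+1$. Thus the constraint $g(\vec w)\le N$ that we must analyze is a single affine-linear inequality on the cone $C$.

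The geometric input is hyperbolicity. By the knot hyperbolization theorem (\cite{T1982}) together with the characterisations preceding it, a hyperbolic knot exterior contains no essential sphere, disk, annulus, or torus; equivalently every essential surface satisfies $-\chi>0$ and $E(K)$ is atoroidal and anannular. Now I would examine $C$ along each extreme ray $\mathbb{R}_{\ge 0}v_j$. If $L(v_j)>\beta(v_j)$ then $g(\vec w)\to\infty$ along that ray, so the inequality $g\le N$ truncates it to a bounded segment. The problematic rays are those with $L(v_j)=\beta(v_j)$: along such a ray the genus stays constant while $\beta\to\infty$, producing carried surfaces of bounded genus but arbitrarily many boundary components. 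I expect this to be the main obstacle. The intended resolution is that such a ray forces a carried summand of zero Euler characteristic whose boundary lies on $\partial E(K)$, namely an essential annulus (or, in the closed direction, an essential torus); by anannularity and atoroidality no such essential piece exists, so the surfaces produced along these rays are compressible, boundary compressible, or boundary parallel, hence not essential. Consequently no extreme ray of $C$ supports essential surfaces of unbounded weight and bounded genus.

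It then follows that the region $\{\vec w\in C : g(\vec w)\le N\}$ carrying essential surfaces is bounded, so it contains only finitely many integral points; ranging over the finitely many branched surfaces $B_i$ yields finitely many weight vectors, and hence, up to isotopy (using the essential transversality framework of Theorem \ref{intersection} to normalise representatives), finitely many essential surfaces of genus at most $N$. The crux of the argument, and the step I would develop most carefully, is the dichotomy on extreme rays together with the verification that the balanced ($L=\beta$) directions cannot carry essential surfaces in a hyperbolic exterior.
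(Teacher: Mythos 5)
The survey states this theorem without proof, citing \cite{Oer2002}, so the comparison is with the cited argument; your skeleton (Floyd--Oertel branched surfaces \cite{FO1984}, weight cones, linearity of $\chi$) is indeed the right framework. But two of your central claims fail. First, $\beta(\vec w)=|\partial F(\vec w)|$ is \emph{not} a linear functional of the weights: the number of components of a multicurve carried by the boundary train track on $\partial E(K)$ is not linear (a primitive weight class may give one circle while its multiples give many, and non-proportional weights mix slopes), so the single affine inequality $g(\vec w)\le N$ does not exist as stated. Only the total boundary weight is linear; $\beta$ becomes linear only after fixing the boundary slope, which requires invoking Hatcher's finiteness of slopes \cite{Hatcher1982} and passing to finitely many subfamilies --- a reduction you never make. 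Second, and more seriously, your ray dichotomy is incomplete: besides $L>\beta$ and $L=\beta$ there are rays with $L<\beta$, and these genuinely occur in hyperbolic exteriors. Since $(L-\beta)(\vec v)=\sum_i (2g_i-2)$ summed over the components of $F(\vec v)$, any carried planar piece gives $L<\beta$: a hyperbolic knot with an essential Conway sphere (a $4$-punctured sphere, $L=2$, $\beta=4$) yields an entire ray inside your region $\{\vec w\in C: L(\vec w)-\beta(\vec w)\le 2N-2\}$, so that region is unbounded and the step ``bounded region, hence finitely many lattice points'' collapses. Finiteness survives only because the surfaces along such a ray are disconnected parallel copies representing a single isotopy class --- which shows that connectedness, a non-linear condition on $C$ that your genus formula silently assumes, is doing essential work your argument never engages.

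Relatedly, $L(\vec v)=\beta(\vec v)$ does not force a $\chi=0$ summand: $\sum_i(2g_i-2)=0$ is satisfied by once-punctured tori (each with $\chi=-1$), or by a planar piece balanced against a genus-two piece, not only by annuli and tori. So the step you yourself flag as the crux --- converting balanced directions into an essential annulus or torus killed by hyperbolicity via \cite{T1982} --- is arithmetically wrong as formulated, and it is exactly where the real content of the cited theorem lies: one must show that an infinite family of \emph{connected} essential surfaces of bounded genus (hence unbounded boundary) forces, in the limit of the projectivized weights, an essential annulus or torus, and one must also rule out that a carried $\chi=0$ direction is merely boundary-parallel (e.g.\ the peripheral torus), which the Floyd--Oertel incompressibility properties alone do not exclude, since such a direction need not be fully carried and requires passing to a sub-branched surface. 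In short, polyhedral truncation of the cone is not the proof; the limiting argument for connected bounded-genus families is, and it is missing.
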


Every non-trivial knot exterior contains at least one both of non-separating essential surfaces and separating essential surfaces.

\begin{theorem}[\cite{FP1930}, \cite{S1934}]
For any knot exterior, there exists a non-separating essential surface with non-empty boundary.
\end{theorem}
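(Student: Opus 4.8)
The plan is to produce the required surface as a minimal genus Seifert surface, disposing of the trivial knot first. For the trivial knot, $E(K)$ is a solid torus and a meridian disk does the job: its boundary is a meridian, which bounds no disk on $\partial E(K)$, so the disk is incompressible, and cutting along it leaves a connected ball, so it is non-separating. Hence I may assume $K$ is non-trivial. I would then invoke Seifert's algorithm (\cite{S1934}, cf. \cite{FP1930}) applied to any diagram of $K$ to obtain a connected orientable spanning surface, and restrict it to $E(K)$ to get a properly embedded connected orientable surface $F$ whose boundary is a single longitude $\lambda\subset\partial E(K)$. Among all such Seifert surfaces I would choose one of minimal genus.

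Next I would verify that $F$ is \emph{non-separating}. Since $H_1(E(K);\mathbb{Z})\cong\mathbb{Z}$ is generated by a meridian $\mu$, and $\mu$ can be realized as a loop meeting $F$ transversely in exactly one point (it is dual to the boundary longitude $\lambda=\partial F$), the algebraic intersection number satisfies $\mu\cdot F=\pm 1$. A separating surface would meet every loop in an even algebraic number of points, so $F$ must be non-separating; equivalently, $[F]$ is the nonzero generator of $H_2(E(K),\partial E(K);\mathbb{Z})$.

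For \emph{incompressibility} I would argue by genus minimality. A compressing disk $D$ has $\partial D=c$ an essential loop on $F$. If $c$ is non-separating on $F$, compressing along $D$ produces a connected Seifert surface of genus $g(F)-1$, contradicting minimality. If $c$ is separating, compression splits $F$ into a closed surface and a new Seifert surface of strictly smaller genus, since the capped-off piece carries positive genus because $c$ is essential. Either way we contradict minimality, so $F$ is incompressible.

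The main obstacle is \emph{boundary incompressibility} together with ruling out boundary parallelism. Boundary parallelism is quickly excluded: a boundary-parallel surface is isotopic into the torus $\partial E(K)$, which forces genus $0$, whereas $g(F)\ge 1$ since $K$ is non-trivial. For boundary incompressibility I would take a hypothetical boundary-compressing disk $D$ with $D\cap F=\alpha$ an essential arc and $\beta=D\cap\partial E(K)$ an arc whose endpoints lie on $\lambda$. The complement $\partial E(K)\setminus\lambda$ is an annulus, and the behaviour of $\beta$ in this annulus is the crux: if $\beta$ is boundary-parallel in the annulus then $\alpha$ is forced to be inessential in $F$, a contradiction; if $\beta$ crosses the annulus, the boundary compression performs a band move on $\lambda$ yielding a spanning surface of strictly smaller genus once the boundary slope is restored, again contradicting minimality. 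Carrying out this slope-versus-genus bookkeeping carefully is the delicate step. Assembling all four properties exhibits $F$ as a non-separating essential surface with non-empty boundary.
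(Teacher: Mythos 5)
The paper states this result as a survey item, citing Frankl--Pontrjagin and Seifert, and gives no proof at all; so there is no in-paper argument to match, and your route --- Seifert's algorithm, minimal genus, the meridian's intersection number for non-separation, genus reduction under compression for incompressibility --- is exactly the standard classical completion of those citations. The non-separating step and the incompressibility step are correct as written (in the separating case you rightly note the capped-off closed piece has positive genus because the curve is essential), and the trivial-knot case via the meridian disk is fine.

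One step, however, is wrong as stated: your exclusion of boundary parallelism. Being parallel into the torus $\partial E(K)$ does \emph{not} force genus $0$: the complement of an open disk in a torus is a genus-one subsurface with a single boundary circle, so your argument breaks down precisely for genus-one Seifert surfaces (e.g.\ the trefoil). The repair is to look at the boundary rather than the genus: a subsurface of $\partial E(K)$ with connected boundary is a disk or a once-punctured torus, and in either case its boundary circle bounds a disk in $\partial E(K)$, whereas $\partial F$ is a longitude, which is essential on $\partial E(K)$; hence $F$ cannot be boundary parallel. As for your admittedly delicate boundary-incompressibility bookkeeping, it is repairable along the lines you sketch --- note that in the spanning case the banded boundary consists of two disjoint circles on the torus whose homology classes sum to the longitude class, so two essential circles would have to be parallel, which is impossible; thus exactly one circle is inessential, and capping it (or discarding the closed-off component when $\alpha$ separates) strictly decreases genus --- but it can also be bypassed entirely: the paper itself quotes the lemma of Waldhausen and Hayashi--Motegi that an incompressible, boundary-compressible surface in a knot exterior is a boundary-parallel annulus, and your $F$ has connected boundary, hence is not an annulus, so incompressibility alone already yields boundary incompressibility.
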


\begin{theorem}[\cite{CS1984}]\label{separating}
For any non-trivial knot exterior, there exists a separating essential surface with non-empty boundary.
\end{theorem}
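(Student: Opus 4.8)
The plan is to produce the surface by the character-variety method of Culler--Shalen, after first recording a homological criterion that converts ``separating'' into a statement about the parity of the number of boundary curves.

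First I would record that for a nontrivial knot the exterior $E(K)$ is irreducible and boundary irreducible: a reducing sphere is impossible since $E(K)\subset S^3$, and a compressing disk for $\partial E(K)$ would, by the loop theorem, force $K$ to be trivial. Since $E(K)$ collapses to a $2$-spine one has $H_2(E(K);\mathbb{Z}/2)=0$, so in the long exact sequence of the pair the boundary map $H_2(E(K),\partial E(K);\mathbb{Z}/2)\to H_1(\partial E(K);\mathbb{Z}/2)$ is injective. A connected properly embedded surface $F$ separates $E(K)$ exactly when $[F]=0$ in $H_2(E(K),\partial E(K);\mathbb{Z}/2)$, hence exactly when $[\partial F]=0$ in $H_1(\partial E(K);\mathbb{Z}/2)$. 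The boundary curves of an essential surface are mutually parallel of a single primitive slope, and a primitive slope is nonzero modulo $2$; therefore $F$ is separating if and only if $F$ has an even number of boundary components. The goal is thereby reduced to exhibiting an essential surface whose boundary is nonempty and of even cardinality.

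Second, I would invoke the Culler--Shalen construction to produce essential surfaces with boundary. A nontrivial knot group is non-abelian and admits an irreducible $SL_2(\mathbb{C})$ representation (the discrete faithful representation for hyperbolic knots via the hyperbolization theorem, and an explicit representation in the Seifert fibered and satellite cases supplied by the trichotomy). The component $X_0$ of the character variety $X(E(K))$ through such a character satisfies $\dim X_0\ge 1$, because $\chi(E(K))=0$ and the peripheral torus contributes a deformation parameter; thus $X_0$ is a curve, and its smooth projective model has ideal points. At an ideal point certain trace functions blow up, yielding an action of $\pi_1 E(K)$ on a simplicial tree with no global fixed point; making an equivariant map to the tree transverse and pulling back edge midpoints gives a nonempty essential surface $S_x$, which the transversality arguments of Theorem~\ref{intersection} let me take with components incompressible, boundary incompressible, and not boundary parallel, and with no sphere or disk components.

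Third, and this is where I expect the genuine difficulty, I would arrange that some essential component has an \emph{even} number of boundary curves, equivalently that the splitting of $\pi_1 E(K)$ dual to the tree action can be taken to be an amalgamated product $A*_C B$ rather than an HNN extension. The clean model is the torus knot, whose Seifert fibered exterior already contains a vertical essential annulus separating its two exceptional fibers; with two parallel boundary curves this annulus is separating and settles that case outright. In general the detected boundary slope is forced by the ideal point, but the number of sheets, equivalently the mod-$2$ dual class of $S_x$ in $H^1(E(K);\mathbb{Z}/2)$, must be controlled: tracking the valuation through the abelianization $\pi_1 E(K)\to H_1(E(K))=\mathbb{Z}$ should show that a non-separating detected surface meets the meridian dual oddly, so that selecting the ideal point to kill this class mod $2$ yields a separating surface. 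As a fallback using the preceding theorem, I would take the guaranteed non-separating essential surface $S$, form two parallel copies tubed along an arc to get a connected surface of class $0$ mod $2$ with $2|\partial S|$ boundary curves, and then compress and boundary-compress to an essential surface; the main obstacle is exactly to verify that this process preserves the nonempty boundary and the separating class while reaching a component that is incompressible, boundary incompressible, and not boundary parallel.
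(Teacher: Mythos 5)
The survey itself contains no proof of Theorem~\ref{separating}; it cites \cite{CS1984}, and your outline correctly reconstructs the skeleton of that proof. Your first step is sound: since $H_2(E(K);\mathbb{Z}/2)=0$, a connected properly embedded surface $F$ separates if and only if $[\partial F]=0$ in $H_1(\partial E(K);\mathbb{Z}/2)$, i.e.\ if and only if $|\partial F|$ is even, which is exactly the reduction Culler--Shalen use. But your third step, which you rightly identify as the genuine difficulty, is where the entire content of \cite{CS1984} lives, and what you offer there is a gesture, not an argument. In the actual proof the separating property is extracted from the \emph{tree}, not from ``tracking the valuation through the abelianization'': one arranges an ideal point at which the trace function $I_\mu$ of the meridian remains bounded; a standard lemma then says $\mu$ fixes a vertex of the Bass--Serre tree, so a meridian loop can be homotoped off the dual surface, and since $[\mu]$ generates $H_1(E(K);\mathbb{Z}/2)\cong\mathbb{Z}/2$, each surviving essential component has trivial mod-$2$ dual class and is therefore separating. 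Your phrase ``selecting the ideal point to kill this class'' presupposes precisely what must be proved, namely that such an ideal point exists \emph{and} simultaneously detects a boundary slope (some peripheral trace must blow up, or the dual surface may be chosen closed, which the theorem does not allow); neither existence statement is addressed. A further unproved assertion is that irreducible $SL_2(\mathbb{C})$ characters, let alone a curve of them, are ``supplied by the trichotomy'' in the satellite case: there is in general no homomorphism from a satellite knot group onto the companion knot group, and Culler--Shalen handle the non-hyperbolic cases by separate arguments (your observation that the cabling annulus of a torus knot is separating, having two boundary curves, is one such; the decomposing annulus settles composite knots).

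Your fallback collapses at exactly the point you flag, and it cannot be repaired by a ``routine verification.'' The tube joining the two parallel copies of the non-separating surface $S$ is itself an obvious compression (its cocore is a compressing disk), and compressing along it simply returns the two disjoint copies of $S$, each non-separating. More fundamentally, every separating surface represents $0$ in $H_2(E(K),\partial E(K);\mathbb{Z}/2)$, so there is no homological quantity that survives the compressions and boundary-compressions to prevent the process from terminating in boundary-parallel annuli and discarded pieces: unlike in the non-separating case (where the nonzero class of the surface of \cite{FP1930}, \cite{S1934} protects it), nothing forces an essential, separating, non-boundary-parallel component with non-empty boundary to remain. This is exactly why the theorem resisted cut-and-paste techniques and why the character-variety argument of \cite{CS1984} remains essentially the only known proof; your fallback, if completed, would itself constitute a new elementary proof of the theorem rather than a verification.
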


For any odd number $n$, there exists a knot whose exterior contains a non-separating essential surface with $n$ boundary components (\cite{E2013}).

\subsection{Boundary slopes}

\begin{definition}[Boundary slopes]
For a Seifert surface $F$ of a knot $K$, the isotopy class of $\partial(F\cap E(K))$ on $\partial E(K)$ is called a {\em longitude}\footnote{The uniqueness of a longitude and a meridian can be proved by using the parity rule ({\cite[Lemma 4.1]{HM1995}}, \cite{CGLS1987}).}.
And the isotopy class of an essential loop on $\partial E(K)$ which bounds a disk in $N(K)$ is called a {\em meridian}.
Th $1$-dimensional homology class of an essential loop $\alpha$ on $\partial E(K)$ can be represented as $[\alpha]=p[m]+q[l]$ by using a meridian $m$ and a longitude $l$.
The isotopy classes of essential loops on $\partial E(K)$ correspond to $\mathbb{Q}\cup \{1/0\}$ one-to-one by regarding as a rational number $p/q$ when $q\ne 0$, $1/0$ when $q=0$.
For an essential surface $F$ with boundary embedded in a knot exterior $E(K)$, a component $\alpha$ of $\partial F$ determines $p/q\in \mathbb{Q}\cup \{1/0\}$.
We call this $p/q$ the {\em boundary slope} of $F$.
The set of all boundary slopes of $K$ is denoted by $\mathcal{B}(K)$.
\end{definition}

\begin{theorem}[\cite{Hatcher1982}]
For any knot $K$, $\mathcal{B}(K)$ is a finite set.
\end{theorem}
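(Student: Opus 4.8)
The plan is to pass from the continuum of essential surfaces to a finite combinatorial model via branched surfaces, and then to read off the boundary slope as an essentially discrete quantity on that model. First I would recall that a boundary slope in $\mathcal{B}(K)$ is, by definition, realized by an essential (hence incompressible and boundary incompressible) surface with non-empty boundary in $E(K)$. Since $E(K)$ is irreducible and has a single torus boundary component, I would invoke the Floyd--Oertel theorem (\cite{FO1984}), already highlighted in the introduction: there is a \emph{finite} collection of branched surfaces $B_1,\dots,B_m$ properly embedded in $E(K)$ such that every incompressible and boundary incompressible surface is carried by some $B_j$ with positive weights. Because $\mathcal{B}(K)$ is then the union over $j$ of the slopes realized by surfaces carried by $B_j$, it suffices to prove that each $B_j$ contributes only finitely many slopes.

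So fix one branched surface $B=B_j$. It meets the boundary torus $\partial E(K)$ in a train track $\tau$, and a surface $S$ carried by $B$ restricts to a curve system $\partial S$ carried by $\tau$. The surfaces carried by $B$ are exactly the non-negative integer weight vectors on the sectors of $B$ satisfying the branch (switch) equations; these form the lattice points of a rational polyhedral cone $C$. I would introduce the linear ``boundary class'' map $\rho\colon C\to H_1(\partial E(K);\mathbb{R})\cong\mathbb{R}^2$ sending a weight vector $w$ to the homology class $[\partial S]$ of the induced boundary curve. Since all boundary components of an essential surface on a torus are mutually parallel, the boundary slope of $S$ is precisely the projective class of $\rho(w)$ in the circle of slopes $\mathbb{Q}\cup\{1/0\}$. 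The goal thus becomes: the projective image under $\rho$ of the weight vectors coming from essential surfaces is finite.

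The heart of the argument --- and the step I expect to be the main obstacle --- is to show that this projective image cannot sweep out an interval of slopes, even though $\rho$ is continuous and $C$ is connected. The approach is to use that $C$ is spanned by finitely many extremal rays carrying ``fundamental'' surfaces $S_1,\dots,S_k$ of boundary slopes $s_1,\dots,s_k$, and to prove that an essential surface cannot realize a slope strictly between these. Heuristically, if a carried essential surface $S$ had $\rho(w)$ pointing in a direction not parallel to any single contributing $\rho(u_i)$, one would want $\partial S$ to decompose as a weighted combination forcing distinct slopes to appear simultaneously on $\partial E(K)$, contradicting that the boundary of an incompressible, boundary incompressible surface consists of mutually parallel curves. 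Making this rigorous is exactly where the geometry of how $B$ meets the boundary torus must be controlled, ruling out the ``recombination'' of two slopes carried by $\tau$ into a single intermediate slope; this is the technical core of \cite{Hatcher1982}. Once it is established that each $B_j$ realizes only the finite slope set $\{s_1,\dots,s_k\}$, taking the union over the finitely many branched surfaces $B_1,\dots,B_m$ shows that $\mathcal{B}(K)$ is finite.
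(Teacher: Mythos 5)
First, a point of order: the paper gives no proof of this theorem at all --- it is a survey, and the statement is quoted with a citation to \cite{Hatcher1982} --- so your proposal can only be judged on its own terms. Its architecture (reduce to finitely many Floyd--Oertel branched surfaces \cite{FO1984}, then show each branched surface contributes finitely many slopes) is indeed the architecture of Hatcher's proof, but the step you yourself flag as ``the technical core'' is not a deferrable technicality: it \emph{is} the theorem, and appealing to the technical core of \cite{Hatcher1982} inside a proof of Hatcher's theorem leaves the argument empty. Worse, the heuristic you offer for that step would not survive an attempt to make it rigorous. A train track on a torus can carry simple closed curves of infinitely many different slopes, all with strictly positive weights on every branch (smooth the single crossing of a meridian and a longitude: the resulting track carries a $(p,q)$-curve for every $p,q>0$), so no argument that stays at the level of the weight cone and its boundary train track can possibly yield finiteness. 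Moreover the ``recombination'' you hope to rule out actually happens and produces no contradiction: the Haken sum of two carried surfaces with boundary slopes $p_1/q_1\ne p_2/q_2$ has boundary equal to the resolution of the two multicurves along the track, which is again a family of \emph{mutually parallel} curves, of a single intermediate slope such as $(p_1+p_2)/(q_1+q_2)$; nothing contradicts the parallelism of boundary components. There is also a flaw in your setup: the map $\rho$ to $H_1(\partial E(K);\mathbb{R})$ records the oriented homology class of $\partial S$, which can vanish for an essential surface (parallel boundary components with cancelling orientations), so the slope is not in general ``the projective class of $\rho(w)$''.

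What actually fills the gap is three-dimensional control of how the branched surfaces meet the boundary torus, not linear algebra on the weight cone. One arranges (using incompressibility and boundary incompressibility of the carried surfaces) that each train track $\tau_j=B_j\cap\partial E(K)$ has no complementary monogon or bigon in the torus $T$; an index count --- each complementary region $R$ with $c$ cusps contributes $\chi(R)-c/2$, and these contributions sum to $\chi(T)=0$ --- then forces every complementary region to be a smooth annulus, i.e.\ forces $\tau_j$ to be a disjoint union of parallel essential circles. A branched surface whose boundary is a family of parallel circles carries surfaces of exactly one boundary slope, so each $B_j$ contributes at most one slope and the union over the finitely many $B_j$ is finite. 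Note how this inverts your picture: finiteness does not come from extremal rays of the cone bounding the possible slopes, but from showing the boundary train track is so degenerate that the slope is constant on each branched surface. Without this boundary-control lemma, your proposal is a restatement of the problem rather than a proof.
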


By Theorem \ref{handlebody}, for the trivial knot, $\mathcal{B}(K)=\{0\}$.

\begin{theorem}[\cite{CS1984}]
For any non-trivial knot $K$, $\mathcal{B}(K)$ contains at least two elements.
\end{theorem}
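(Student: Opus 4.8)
The plan is to exhibit two essential surfaces in $E(K)$ realizing distinct boundary slopes. One of them is immediate: a minimal genus Seifert surface $F$ of the non-trivial knot $K$ is incompressible and boundary incompressible, and since $F$ has positive genus it is not boundary parallel, so $F$ is essential; its boundary is a longitude and hence contributes the slope $0$ to $\mathcal{B}(K)$. Thus the whole problem reduces to producing a single essential surface whose boundary slope is different from $0$, and for this I would invoke the $SL_2(\mathbb{C})$ representation variety machinery of Culler and Shalen that underlies \cite{CS1984}.

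First I would pass to the character variety $X(E(K))$ of $\pi_1(E(K))$. Since $K$ is non-trivial its group is non-abelian and admits an irreducible representation into $SL_2(\mathbb{C})$ (for a hyperbolic knot one lifts the holonomy of the complete hyperbolic structure, and the remaining torus and satellite knots may be handled separately; in all cases the existence of an irreducible representation is known). A standard dimension estimate shows that any component of $X(E(K))$ passing through an irreducible character has complex dimension at least the number of torus boundary components of $E(K)$, that is, at least one. I would fix such a curve $C$, pass to its smooth projective completion $\tilde{C}$, and note that, $C$ being affine, $\tilde{C}$ has at least one ideal point $x$.

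Next I would run the Culler--Shalen construction at $x$. The valuation of the function field of $\tilde{C}$ determined by $x$ produces a non-trivial action of $\pi_1(E(K))$ on a simplicial tree with no global fixed point, and the splitting dual to this action is carried by a non-empty essential surface $S\subset E(K)$. The boundary behaviour of $S$ is controlled by the peripheral subgroup $\langle m,l\rangle\cong\mathbb{Z}^2$: the detected boundary slope is the class $m^{p}l^{q}$ whose trace function remains finite at $x$ while the traces of the other peripheral elements have poles, so that once the slope is seen to be non-longitudinal it gives an element of $\mathcal{B}(K)$ other than $0$.

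The delicate point, and the place where non-triviality of $K$ must really be used, is to guarantee that the slope detected by the irreducible curve is not the longitude. The longitude is detected precisely when $\mathrm{tr}_{l}$ stays bounded at the ideal point while $\mathrm{tr}_{m}$ blows up; this is exactly what happens on the curve of reducible (abelian) characters, where $l$ lies in the commutator subgroup, has constant trace $2$, and the meridian eigenvalue runs off to infinity. To finish I would show that along an irreducible curve the meridian eigenvalue cannot be constant, so that the restriction $C\to X(\partial E(K))$ is non-constant and some peripheral trace other than that of $l$ forces a non-zero slope. Concretely this is the assertion that the non-abelian factor of the $A$-polynomial of $K$ is non-trivial and genuinely depends on the meridian variable, equivalently that its Newton polygon has an edge that is not vertical; the reciprocal slope of such an edge is then a boundary slope different from $0$. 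I expect this separation of the irreducible curve from the abelian one to be the main obstacle, the rest being the standard Culler--Shalen package together with the elementary remark about the Seifert surface; combining the non-zero slope with the Seifert slope $0$ yields $|\mathcal{B}(K)|\ge 2$.
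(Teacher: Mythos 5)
This survey does not prove the statement; it simply quotes it from Culler--Shalen \cite{CS1984}, and your outline is an attempt to reconstruct exactly that reference's strategy (a Seifert surface for the slope $0$, an ideal point of a curve of $SL_2(\mathbb{C})$-characters for a second slope). The skeleton is right, and you have correctly located the crux; but your resolution of the crux is backwards. To produce a non-zero slope you need an ideal point $x$ at which the \emph{longitude} trace $I_l$ has a pole, because the detected slope is precisely the peripheral class whose trace stays finite at $x$; equivalently, you need $I_l$ (the longitude eigenvalue $L$) to be \emph{non-constant} on the curve $C$. Your plan is instead to show that the meridian eigenvalue is non-constant, and the ``equivalently'' you assert is false: for an irreducible factor $f(M,L)$ of the $A$-polynomial, non-constancy of $M$ on $\{f=0\}$ is equivalent to $f$ depending on $L$ (a non-horizontal edge of the Newton polygon), whereas what yields a non-vertical edge, hence a slope different from $0$, is non-constancy of $L$, i.e.\ $f$ genuinely depending on $M$. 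The difference is fatal: on a component whose image in the eigenvalue plane is a horizontal line $\{L=c\}$ (factor $L-c$), the meridian eigenvalue is non-constant, the restriction $C\to X(\partial E(K))$ is non-constant, and yet every ideal point detects the longitude, so the only slope produced is $0$. As written, your final step proves ``some detected slope is not the meridian,'' which is not what is needed.

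Nor can this be dismissed as bookkeeping, because curves of irreducible characters on which $I_l$ is constant genuinely occur: bending an irreducible representation along a closed essential surface (for instance the companion torus of a satellite knot) produces positive-dimensional families of irreducible characters whose restriction to $\partial E(K)$ is constant, so ``any curve through any irreducible character'' will not do. A correct implementation must choose the curve with care or split into cases: for hyperbolic knots one uses the geometric component, where Thurston--Neumann--Zagier deformation theory shows that every non-trivial peripheral trace, in particular $I_l$, is non-constant; torus knots are handled directly (the cabling annulus has slope $pq\neq 0$); and satellite knots require a separate topological argument via the companion, which is essentially how Culler and Shalen proceed. Your blanket claim that an irreducible representation ``is known'' to exist in all cases compounds the problem: for satellite knots (e.g.\ those with trivial Alexander polynomial) this is a deep theorem of Kronheimer and Mrowka, far from routine, and even granting it, the component containing that character may be exactly one of the bending components above. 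So the gap is concrete: the non-constancy statement at the heart of your argument is the wrong one, and the case analysis needed to establish the right one ($I_l$ non-constant on a suitably chosen curve) is absent.
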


Since the essential surfaces in the torus knot exterior are only the minimal genus Seifert surface and the cabling annulus (\cite{T1994}), we have $|\mathcal{B}(K)|=2$ for a torus knot $K$.
In general, since the more the number of essential surfaces increases as the knot becomes complicated, the more $|\mathcal{B}(K)|$ increases (\cite{HT1985}, \cite{HO1989}, \cite{ND2001}).

\begin{conjecture}[Two slopes conjecture \cite{Z1991}]
If $|\mathcal{B}(K)|=2$, then $K$ is a torus knot.
\end{conjecture}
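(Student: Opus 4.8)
The plan is to combine the geometric trichotomy with a lower bound on the number of boundary slopes for the non-torus cases. By the knot hyperbolization theorem \cite{T1982}, a knot is the trivial knot, a torus knot, a satellite knot, or a hyperbolic knot. The hypothesis $|\mathcal{B}(K)|=2$ already excludes the trivial knot, for which $\mathcal{B}(K)=\{0\}$. Since the torus knots realize exactly the value $|\mathcal{B}(K)|=2$, the two slopes coming from the minimal genus Seifert surface and the cabling annulus \cite{T1994}, it suffices to prove that every satellite knot and every hyperbolic knot carries at least three distinct boundary slopes. In both cases the Seifert surface contributes the slope $0$, and the theorem of Culler--Shalen \cite{CS1984} already supplies a second, nonzero slope; the entire content of the conjecture is therefore the production of a third, genuinely distinct, boundary slope.

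For the satellite case I would exploit the companion torus $T$ together with the JSJ decomposition of $E(K)$. Cutting $E(K)$ along $T$ yields the exterior $E(C)$ of the companion knot $C$ and the pattern piece $V-\mathrm{int}\,N(K)$, where $V$ is the companion solid torus. First I would record the slope $0$ from a Seifert surface. Next, since $C$ is non-trivial, $E(C)$ contains essential surfaces with at least two distinct slopes by \cite{CS1984}; I would attempt to extend these across the pattern region to essential surfaces in $E(K)$ and track how the winding number $w$ of $K$ in $V$ rescales the resulting slopes on $\partial E(K)$. Finally, when $K$ is a genuine cable the cabling annulus supplies a further slope of the form $pq$. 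The key bookkeeping step is to verify that the slope $0$, the Culler--Shalen slope, and the slope inherited from the companion (or the cabling annulus) cannot all coincide; a winding-number and linking-number computation should separate them and give $|\mathcal{B}(K)|\ge 3$.

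The hyperbolic case is where I expect the real difficulty, and it is the reason the statement remains a conjecture. Here I would pass to the $SL_2(\mathbb{C})$ character variety of $E(K)$ and use Culler--Shalen theory: ideal points of a curve component detect essential surfaces, and the associated boundary slopes appear as the slopes of the edges of the Newton polygon of the $A$-polynomial. The goal would be to show that for a hyperbolic knot this Newton polygon has at least three edges of distinct slope, or equivalently that at least three distinct boundary slopes are strongly detected. One would then try to rule out the degenerate configuration in which all detected slopes collapse to two values, arguing that such degeneracy forces the $A$-polynomial into the rigid shape characteristic of torus knots and hence contradicts hyperbolicity.

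The main obstacle is precisely this last step: there is no known mechanism guaranteeing that the character variety or the $A$-polynomial of an arbitrary hyperbolic knot detects a third boundary slope, and hyperbolic knots whose detected slopes might, a priori, coincide are exactly the cases that must be controlled. Thus, while the satellite case plausibly reduces to an essentially combinatorial slope computation, the hyperbolic case appears to require genuinely new input beyond Culler--Shalen theory, and this is where I would expect the proof to stall.
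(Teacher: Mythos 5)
The statement you were asked to prove is not a theorem of the paper at all: it appears there as an open problem, the Two slopes conjecture of Zhang \cite{Z1991}, and the paper offers no proof of it. So there is nothing in the paper to compare your argument against, and you correctly sensed this --- your write-up is an attack plan rather than a proof, and you say so yourself. Your initial reductions are sound and match what the paper records: the trivial knot is excluded since $\mathcal{B}(K)=\{0\}$; torus knots realize exactly two slopes, coming from the minimal genus Seifert surface and the cabling annulus \cite{T1994}; and by \cite{CS1984} every non-trivial knot already has at least two boundary slopes, so the conjecture is equivalent to producing a third slope for every satellite knot and every hyperbolic knot. That framing is correct.

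However, both halves of your plan have genuine gaps, not only the one you flag. In the satellite case, extending an essential surface of the companion exterior $E(C)$ across the pattern piece is not automatic: the boundary of such a surface lies on the companion torus, not on $\partial E(K)$, so one must tube or cap along the companion torus, and the result can fail to be essential; moreover the winding-number rescaling you invoke degenerates completely when the winding number of $K$ in the companion solid torus is $0$, in which case the companion's slopes contribute nothing visible on $\partial E(K)$. So even the case you call combinatorial needs a real idea, not just bookkeeping. In the hyperbolic case, as you yourself note, Culler--Shalen theory detects only some boundary slopes: there is no mechanism forcing the Newton polygon of the $A$-polynomial of a hyperbolic knot to have sides of three distinct slopes, and essential surfaces need not be detected by ideal points at all. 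Since neither case is closed, your proposal does not establish the statement --- which is exactly consistent with its status in the paper as a conjecture that remains open.
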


\begin{theorem}[\cite{HO1989}]
For any rational number $p/q$, there exists a knot $K$ such that $p/q\in \mathcal{B}(K)$.
\end{theorem}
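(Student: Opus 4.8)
The plan is to realize each prescribed slope by an \emph{explicit} essential surface inside a conveniently chosen family of knots, rather than to search blindly among all knots. Since $\mathcal{B}(K)$ is by definition the set of slopes carried by essential surfaces, it suffices, for a fixed reduced fraction $p/q$, to produce a single nontrivial knot $K$ together with one incompressible, boundary incompressible, and not boundary parallel surface $F\subset E(K)$ whose boundary curves have slope $p/q$. I would take the family of Montesinos knots $K(\beta_1/\alpha_1,\dots,\beta_n/\alpha_n)$, assembled from $n$ rational tangles glued along Conway decomposing spheres. For nontrivial parameters the exterior is irreducible and boundary irreducible, so the essential transversality machinery of Theorem \ref{intersection} and the handlebody rigidity of Theorem \ref{handlebody} are both available for the final essentiality analysis.

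Next I would build the candidate surface tangle by tangle. In each rational tangle ball $B_i$ there is a standard catalogue of properly embedded \emph{partial} surfaces, each determined by how many sheets it presents on the tangle sphere and by how it twists inside $B_i$; two partial surfaces in adjacent balls glue along their common sphere exactly when their arc systems match in number and in slope. I would fix a common number $m$ of sheets meeting each decomposing sphere and select, in tangle $i$, a partial surface contributing a local twisting number $\tau_i$ depending on $\beta_i/\alpha_i$ and on the chosen arc pattern. Gluing the pieces closes up to a surface $F\subset E(K)$, and the framing it induces on $\partial E(K)$ is read off by summing the local twisting against the Seifert framing, giving a boundary-slope formula in which the denominator is governed by the number $m$ of sheets and the numerator by the aggregate twisting $\sum_i \tau_i$. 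By first choosing $m$ so that the reduced denominator equals $q$, and then adjusting the tangle slopes $\beta_i/\alpha_i$ (adding an extra integer twist tangle if needed) so that the reduced numerator equals $p$, any target $p/q$ can be hit. The integral case $q=1$ is already transparent for pretzel knots, whose checkerboard spanning surfaces have slope linear in the half-twist counts, which serves as a useful sanity check on the constant in the formula.

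The main obstacle is not the slope computation, which is combinatorial bookkeeping, but proving that the assembled $F$ is genuinely essential, for only then does its slope lie in $\mathcal{B}(K)$. I would establish incompressibility and boundary incompressibility by a cut-and-paste argument modeled on the proof of Theorem \ref{intersection}: put a hypothetical (boundary) compressing disk $D$ in general position with the decomposing spheres, and use innermost-loop and outermost-arc surgeries to push $D$ off those spheres into a single tangle ball $B_i$, where $F\cap B_i$ is one of the standard partial surfaces known to be incompressible and boundary incompressible relative to the tangle sphere, yielding a contradiction; the spheres themselves are incompressible Conway spheres, which is what legitimizes the surgeries in the irreducible, boundary irreducible exterior. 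Not-boundary-parallelism would then be confirmed by an Euler-characteristic or homological count ruling out an isotopy of $F$ into $\partial E(K)$. The genuinely delicate points, requiring care uniformly across all target fractions, are (i) arranging that the chosen arc patterns on the spheres actually match so the pieces glue into a connected surface of the intended topology, and (ii) certifying that the partial pieces remain incompressible for the specific twisting numbers $\tau_i$ forced by the realization; controlling both of these simultaneously as $p/q$ ranges over all of $\mathbb{Q}$ is where the bulk of the argument lies.
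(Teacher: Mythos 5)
The paper itself contains no proof of this statement; it is quoted with only the citation to \cite{HO1989}, so the comparison must be made against Hatcher--Oertel's actual argument. Your plan correctly reconstructs the broad strategy of that source: work inside the family of Montesinos knots, assemble candidate surfaces from standard pieces in the rational tangle balls glued along Conway spheres, read off the boundary slope from the twisting data, and certify essentiality by a cut-and-paste gluing argument in the spirit of Theorem \ref{intersection}. So you are aimed at the right family and the right machinery. As a proof, however, the proposal has two genuine gaps, and they sit exactly where the content of \cite{HO1989} lies.

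First, the realization step is asserted rather than proved. You claim the reduced denominator can be set to $q$ by choosing the number $m$ of sheets and the reduced numerator then adjusted to $p$ by retuning the tangle fractions, but you give no slope formula and no argument that numerator and denominator can be tuned independently: in the Hatcher--Oertel framework the slope of a candidate surface is computed from twist numbers of an edgepath system, and the achievable values are constrained by which arc patterns on the Conway spheres extend consistently across \emph{all} tangle balls at once --- precisely the matching problem you flag as point (i) and then defer. Second, the essentiality argument is circular at the decisive moment. The innermost-loop/outermost-arc scheme for pushing a compressing disk into one ball $B_i$ is a legitimate gluing lemma, but it needs each piece $F\cap B_i$ to be incompressible \emph{and} boundary incompressible rel the tangle sphere, and that is exactly what can fail for the twisting parameters forced by your target slope; the technical core of \cite{HO1989} is an incompressibility criterion that sorts the candidate surfaces (all built just as you describe) into essential and inessential ones, and many are inessential in a way not visible piece by piece. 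Writing that the pieces are ``standard partial surfaces known to be incompressible'' assumes the conclusion. Since you acknowledge both difficulties but resolve neither, the proposal is a correct research plan matching the cited source, not yet a proof of the theorem.
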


Some general evaluation formulae on boundary slopes for two surfaces in a knot exterior are given in \cite{HM1997}, \cite{T1996} etc.
And an evaluation on peripheral slopes of peripherally compressible essential closed surface is in \cite{IO2002}.
On the other hand, for any integer $m\ge0$, there exists a hyperbolic knot whose exterior contains $m+1$ peripherally compressible essential closed surfaces with peripheral slopes $0, 1, \ldots ,m$ (\cite{Mi2004}).
And for a set $\{a_1,\ldots,a_n\}$ of any even integers, there exists a hyperbolic knot which has an orientable essential spanning surface (namely, a surface without closed component, and its boundary is the knot) $F_i$ $(i=1,\ldots,n)$ with boundary slope $a_i$ (\cite{T2004}).

\subsection{Non-meridional essential planar surfaces and the cabling conjecture}

\begin{lemma}[{\cite[Lemma 1.10]{W1967}}, {\cite[Lemma 1.10]{AH1}}, {\cite[Proposition 2.1]{HM1995}}]
An incompressible and boundary compressible surface in a knot exterior is a boundary parallel annulus.
\end{lemma}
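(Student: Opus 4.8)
The plan is to exploit the one structural feature peculiar to a knot exterior, namely that $\partial E(K)$ is a torus $T$, together with the irreducibility of $E(K)$ (which follows from Theorem~\ref{3-sphere}, since a sphere in $E(K)\subset S^3$ bounds a ball on the side missing $K$). Let $D$ be a boundary compressing disk for $F$, with $\partial D=\alpha\cup\beta$, where $\alpha$ is an essential arc in $F$ and $\beta$ an arc in $T$ meeting $\partial F$ only at its endpoints. First I would show, assuming $F$ connected, that the components of $\partial F$ are mutually parallel essential loops on $T$: a trivial component would, via an innermost disk in $T$, either yield a compressing disk for $F$ or force $F$ to be a disk, which has no essential arc and hence is not boundary compressible; and disjoint essential loops on a torus are parallel. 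Consequently $T$ cut along $\partial F$ is a union of annuli, and after isotopy $\beta$ lies in one of them.

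The key step is a dichotomy for $\beta$. If $\beta$ cuts off a disk $\Delta\subset T$ with $\partial\Delta=\beta\cup\gamma$, where $\gamma$ is a subarc of a single component of $\partial F$, then $D\cup_\beta\Delta$ is a disk in $E(K)$ whose interior misses $F$ and whose boundary is the loop $\alpha\cup\gamma\subset F$. Since $\alpha$ is essential, $\alpha\cup\gamma$ does not bound a disk in $F$, so $D\cup\Delta$ is a compressing disk, contradicting incompressibility of $F$. Here I would use innermost-disk choices, exactly as in the proof of Theorem~\ref{intersection}, to guarantee that $\Delta$, and hence the compressing disk, has interior disjoint from $F$. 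Therefore $\beta$ must be a spanning arc of its annulus, so that $\alpha$ runs between two boundary components $c_1,c_2$ of $F$.

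It then remains to see that $F$ is a boundary parallel annulus. Let $c_\alpha$ be the band sum of $c_1$ and $c_2$ along $\alpha$, a loop in $F$. Sliding $\alpha$ across $D$ shows that $c_\alpha$ is isotopic in $E(K)$ to the band sum $c'$ of $c_1$ and $c_2$ along the spanning arc $\beta$, and $c'$ bounds a disk $\Delta'$ in $T$ (the band sum of the two ends of an annulus along a spanning arc is inessential in that annulus). Hence $c_\alpha$ bounds a disk in $E(K)$ whose interior misses $F$, so by incompressibility $c_\alpha$ bounds a disk $d$ in $F$. Capping the pair-of-pants neighborhood of $c_1\cup\alpha\cup c_2$ by $d$ then exhibits $F$ as an annulus $A$ with $\partial A=c_1\cup c_2$. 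Finally, boundary compressing $A$ along $D$ produces a disk whose boundary bounds a disk in $T$; capping off gives a $2$-sphere, which bounds a ball by irreducibility, and this ball realizes the parallelism of $A$ into $T$.

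The hard part will be the boundary analysis underlying this reduction, and in particular the degenerate configuration in which $\alpha$ has both endpoints on a single boundary circle (so $F$ would have just one boundary component). There the spanning arc $\beta$ need not produce a loop bounding in $T$, the band-sum argument of the previous paragraph breaks down, and one must argue separately that this configuration cannot arise for an incompressible surface, invoking the boundary irreducibility of $E(K)$ for a nontrivial knot and treating the trivial knot directly via its solid-torus exterior (Theorem~\ref{handlebody}). Keeping every auxiliary disk embedded with interior off $F$, through the innermost/outermost bookkeeping used in Theorem~\ref{intersection}, is the recurring technical obstacle throughout.
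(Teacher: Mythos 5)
The paper itself states this lemma without proof, citing Waldhausen and Hayashi--Motegi, so your proposal can only be measured against the classical argument of those sources; your main line reproduces it faithfully and correctly. The reduction to essential boundary circles on the torus $T$ (a disk has no essential arc, so $F$ is not a disk, and an innermost trivial component of $\partial F$ would cap to a compressing disk), the dichotomy for $\beta$ in the complementary annulus $R$, the inessential case killed by the disk $D\cup\Delta$ with boundary $\alpha\cup\gamma$ (essential precisely because $\alpha$ is), the band-sum disk forcing $c_\alpha$ to bound a disk $d\subset F$ and hence $F=P\cup d$ to be an annulus, and the final sphere-plus-irreducibility parallelism step are all sound. (Two small simplifications you could note: $\mathrm{int}\,\beta$ automatically misses $\partial F$ since $D\cap F=\alpha$, and the disk bounded by $c_\alpha$ can be taken embedded at once as two parallel copies of $D$ joined by the rectangle $\overline{R\setminus N(\beta)}$, so no innermost bookkeeping is needed there.)

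The genuine gap is your treatment of the degenerate case $|\partial F|=1$, and the tool you reach for --- boundary irreducibility of $E(K)$ --- is the wrong one: the configuration produces no disk with essential boundary on $T$, so $\partial$-irreducibility never engages it, and indeed in this case the two-copies-of-$D$ disk has boundary that is null-homologous in a pair-of-pants neighborhood of $c\cup\alpha$ and bounds a disk in $F$, so incompressibility yields nothing. What actually kills the case is a sidedness (parity) argument, which is exactly the method of the cited Hayashi--Motegi survey (compare the paper's footnote invoking the parity rule for longitudes): since $\mathrm{int}\,D\cap F=\emptyset$ and $F$ is two-sided, $D$ approaches $\alpha$ from one fixed side of $F$, and the corner structure of $D$ at the two points of $\partial\alpha=\partial\beta$ then forces $\beta$ to depart $c$ into the \emph{same} side of $c$ in $T$ at both endpoints. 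Hence both endpoints of $\beta$ lie on the same boundary circle of the annulus $R$ obtained by cutting $T$ along $c$, so $\beta$ cannot be spanning; it cuts off a disk in $R$, and your own inessential-$\beta$ argument gives the contradiction with the essentiality of $\alpha$. With that substitution the degenerate case disappears uniformly (for the unknot as well, so no separate solid-torus discussion via Theorem \ref{handlebody} is needed), and the rest of your proof stands.
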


\begin{theorem}[The classification of essential annuli \cite{S1973}]\label{classify_annuli}
Let $K$ be a knot in $S^3$.
If $E(K)$ contains an essential annulus $A$, then one of the followings holds.
\begin{enumerate}
\item $K$ is a torus knot or a cabled knot, and $A$ is its cabling annulus.
\item $K$ is a composite knot, and $A$ can be extend into a decomposing sphere for $K$.
\end{enumerate}
\end{theorem}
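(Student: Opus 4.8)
The plan is to read off the common boundary slope of the essential annulus $A$ and to split the argument according to whether this slope is meridional, the two cases producing conclusions (2) and (1) respectively. Two preliminaries feed the split. Since the two components of $\partial A$ cobound the annulus $A$, they are homologous on the torus $\partial E(K)$ and hence isotopic there, so $\partial A$ determines a single slope $r$. Moreover, the exterior of the trivial knot is a solid torus, i.e. a handlebody, which by Theorem~\ref{handlebody} contains no essential annulus; therefore $K$ is nontrivial and $E(K)$ is irreducible and boundary irreducible, so the cut-and-paste machinery behind Theorem~\ref{intersection} is available.

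If $r$ is meridional, each component of $\partial A$ bounds a meridian disk of $N(K)$ meeting $K$ once. Capping $A$ with two such disks $D_1,D_2$ yields a $2$-sphere $S=A\cup D_1\cup D_2$ meeting $K$ transversely in two points, with $S\cap E(K)=A$. Because $A$ is essential, $S\cap E(K)$ is essential, so $S$ is by definition a decomposing sphere; and since $A$ is not boundary parallel, neither summand is trivial, so $K$ is composite. As $A=S\cap E(K)$, the annulus extends into $S$, which is exactly conclusion (2).

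If $r$ is non-meridional, the two curves $\partial A$ cut $\partial E(K)$ into annuli $T_1,T_2$, and I would form the torus $\hat A=A\cup T_1$, which is embedded in $S^3$ and disjoint from $K$. A torus in $S^3$ bounds a solid torus on at least one side; taking such a side $W$, the aim is to exhibit $K$ as a curve $\gamma$ on $\partial W$ winding at least twice around $W$. Incompressibility of $A$ should prevent $\gamma$ from bounding a meridian disk of $W$, while non-boundary-parallelism should prevent $\gamma$ from being a core-parallel longitude; hence $\gamma$, and with it $K$, is a genuine $(p,q)$-curve with $|q|\ge 2$ on $\partial W$. When the region complementary to $\hat A$ is also a solid torus, $\hat A$ is unknotted and $K$ is a torus knot; otherwise the core of $W$ is a nontrivial companion and $K$ is its cable. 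In both subcases $\hat A=A\cup T_1$ is the cabling torus and $A$ its cabling annulus, giving conclusion (1).

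The main obstacle is the non-meridional case, precisely the step of turning the three essentiality hypotheses on $A$ into winding-number control for $K$ on $\partial W$. Concretely, I must guarantee that $\hat A$ can be chosen (capping with $T_1$ rather than $T_2$, and selecting the correct complementary solid torus) so that $K$ really lies on $\partial W$ winding at least twice, ruling out the degenerate configurations in which $A$ would instead compress or be boundary parallel. Carrying this out rigorously amounts to an innermost-disk and outermost-arc analysis in the spirit of Theorem~\ref{intersection}, applied to the intersections of $A$ with meridian disks of the candidate solid tori, together with the standard fact that a torus embedded in $S^3$ bounds a solid torus on at least one side; correctly separating the torus-knot conclusion from the cable-knot conclusion via the knottedness of the complementary region is the remaining bookkeeping.
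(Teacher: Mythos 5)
The paper itself offers no proof of Theorem \ref{classify_annuli}: it is a survey statement quoted from Simon \cite{S1973}, so your proposal has to stand on its own. Its first half does stand: the reduction to a single boundary slope, the nontriviality of $K$ via Theorem \ref{handlebody}, and the whole meridional case are correct. Indeed, with the paper's definition of a decomposing sphere (a sphere meeting $K$ transversely in two points such that $S\cap E(K)$ is essential), your sphere $S=A\cup D_1\cup D_2$ is a decomposing sphere by hypothesis, and non-boundary-parallelism of $A$ excludes a trivial summand.

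The non-meridional case, however, contains a genuine gap, and it is not the innermost-disk ``bookkeeping'' you describe at the end. Your plan requires exhibiting $K$ as a curve on the torus $\hat A=A\cup T_1$, but $K$ does not lie on $\hat A$, and it can be isotoped onto $\hat A$ (in a way that keeps track of $A$) only if the slope of $\partial A$ is integral: the curve of $\hat A$ available to receive $K$ is the core of $T_1$, which inside the solid torus $N(K)$ is homotopic to $K^q$, where $p/q$ is the boundary slope, and an embedded curve in a solid torus is isotopic to the core only if it winds once; so $K$ sits on $\hat A$ exactly when $|q|=1$. Integrality of the slope is itself one of the substantive conclusions of the theorem (the cabling annulus of a $(p,q)$-torus or cable knot has slope $pq$), so assuming $K$ can be placed on $\hat A$ begs the question. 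The real work is to rule out $|q|\ge 2$: in the standard argument one cuts $E(K)$ along the separating annulus $A$ into pieces $X_1,X_2$, so that $S^3=N(K)\cup X_1\cup X_2$ is glued along the three annuli $T_1,T_2,A$; essentiality of $A$ shows that if $X_i$ is a solid torus then the annulus cores wind at least twice around it (this is the one part of your winding argument that is correct --- winding $0$ contradicts incompressibility, winding $1$ produces a product region making $A$ boundary parallel), and then the classification of Seifert fibrations of $S^3$, which admit at most two exceptional fibers (or an equivalent $\pi_1$ argument), forces the remaining multiplicity, the winding around $N(K)$, to be $1$. Relatedly, Alexander's theorem may hand you the solid torus on the side of $\hat A$ whose \emph{interior} contains $K$, in which case $K$ is nowhere near $\partial W$ and your analysis cannot start; you must prove that one of $X_1,X_2$ (the two sides disjoint from $K$) is a solid torus, which needs the same non-elementary input. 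Analyzing intersections of $A$ with meridian disks by innermost-disk and outermost-arc arguments will produce neither fact.
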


\begin{conjecture}[The cabling conjecture \cite{GS1986}]\label{cabling}
If a reducible manifold is obtained by a Dehn surgery along a knot $K$ in $S^3$, then $K$ is a cabled knot, and the surgery slope is the boundary slope of the cabling annulus.
\end{conjecture}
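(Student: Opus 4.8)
\section*{A strategy for the cabling conjecture}

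The statement is a well-known open problem, so what follows is an attack strategy rather than a complete argument. The plan is to argue by contradiction. Suppose Dehn surgery with slope $\gamma$ along $K$ yields a reducible manifold $M = E(K) \cup_{\gamma} V$, where $V$ is the filling solid torus, and suppose that $K$ is \emph{not} a cabled knot (or that $\gamma$ is not the cabling slope). A reducible manifold contains an essential sphere $\widehat{S}$. Since $E(K)$ is irreducible (a standard consequence of the irreducibility of $S^3$, Theorem \ref{3-sphere}), $\widehat{S}$ cannot be isotoped off $V$, so after an isotopy minimizing the intersection, $\widehat{S} \cap V$ is a nonempty collection of meridian disks of $V$. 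Hence $P = \widehat{S} \cap E(K)$ is an incompressible planar surface properly embedded in $E(K)$ whose boundary consists of $n \ge 1$ parallel curves on $\partial E(K)$, all of slope $\gamma$; I would choose $\widehat{S}$ so that $n$ is minimal.

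Following the philosophy emphasized throughout this survey, the next move is to introduce a second surface in order to manufacture an intersection to analyze. Two natural choices are a minimal-genus Seifert surface $F$ for $K$ (which exists for every knot) or a second reducing sphere. Using the essential transversality theorem (Theorem \ref{intersection}), I would arrange $P$ and $F$ so that every component of $P \cap F$ is essential in each, and then pass to the intersection graphs $G_P \subset \widehat{S}$ and $G_F \subset \widehat{F}$, whose fat vertices are the meridian disks (respectively, the copies of $F$ met by $V$) and whose edges are the arcs and loops of $P \cap F$. The heart of the argument is the Gordon--Luecke style combinatorics of these two graphs: counting edges by their labels around each vertex, applying the parity rule, and locating a \emph{Scharlemann cycle}.

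The payoff step is to convert a Scharlemann cycle into a cabling annulus. Such a cycle bounds a disk face whose boundary runs over a single meridian disk of $V$; banding the corresponding sub-disks of $P$ along $\partial V$ produces an annulus or M\"obius band embedded in $E(K)$ whose core lies on $\partial N(K)$ parallel to $K$. After verifying incompressibility and boundary incompressibility, the classification of essential annuli (Theorem \ref{classify_annuli}) forces $K$ to be a torus knot or a cabled knot and identifies the relevant annulus; a framing and linking-number computation then matches the surgery slope $\gamma$ with the boundary slope of the cabling annulus, contradicting the assumption and completing the scheme.

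The hard part --- and the reason the conjecture remains open in general --- is gaining enough control over the combinatorics when the number of punctures $n$ is large. One must bound $n$, exclude the graph configurations that carry no Scharlemann cycle of the required type, and control the genus and the number of intersection curves contributed by the auxiliary surface $F$; the interaction between the two graphs is fully understood only in special situations. This is precisely why the conjecture has been confirmed only in restricted classes --- for instance satellite knots, where the essential torus in the exterior and Theorem \ref{classify_annuli} already do most of the work, as well as alternating, strongly invertible, and various symmetric or small knots --- while the general case resists at exactly the step of taming these intersection graphs.
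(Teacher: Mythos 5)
This statement is the Cabling Conjecture: it is presented in the paper as an open \emph{conjecture}, and the paper contains no proof of it --- only a list of classes of knots for which it has been verified (satellite knots \cite{S1990}, strongly invertible knots \cite{EM1992}, alternating knots \cite{MT1992}, symmetric knots \cite{HS1998c}, knots with bridge spheres of Hempel distance at least $3$ \cite{BCJTT201211}, and the case of connected sums of lens spaces \cite{G2010}). So there is no proof in the paper against which your attempt can be compared, and you were right not to claim one: your text is explicitly a strategy sketch, and the ``hard part'' you flag at the end is precisely the open problem. In that sense your proposal contains a genuine and unavoidable gap, and you have identified it honestly.

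Two comments on the strategy itself, since it is close to, but not identical with, the standard line of attack. First, your opening reduction is correct and is exactly how the problem is set up in the literature: irreducibility of $E(K)$ forces the reducing sphere to meet the filling torus $V$, giving an incompressible planar surface $P\subset E(K)$ with all boundary components of slope $\gamma$, and one takes $|\partial P|$ minimal. However, in the foundational result \cite{GL1987} the second surface is not a Seifert surface but another \emph{planar} surface (a punctured sphere with meridional boundary coming from $S^3$), and the combinatorics of the two graphs yields only that $\gamma$ is integral --- not the full conjecture. Second, your ``payoff step'' is stated too optimistically: a Scharlemann cycle face does not in general band together to give a cabling annulus; what a Scharlemann cycle standardly produces is a lens space summand in the surgered manifold (or a M\"obius band/cable space configuration under extra hypotheses), and promoting such configurations to an essential annulus in $E(K)$, so that Theorem \ref{classify_annuli} can be invoked, is exactly the step that no one knows how to do when $|\partial P|$ is large. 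So the structure of your sketch is faithful to how the partial results are actually proved, but the two steps above are where a referee would stop you, and where the conjecture itself currently stands.
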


The cabling conjecture was solved on satellite knots(\cite{S1990}), strongly invertible knots (\cite{EM1992}), alternating knots (\cite{MT1992}), symmetric knots (\cite{HS1998c}), knots with bridge decomposing sphere of Hempel distance greater than or equal to $3$ (\cite{BCJTT201211}).
In particular, since knots with Hempel distance of $2$ are classified (\cite{BCJTT201309}), it would be one way for the solution to consider the cabling conjecture on this class.
And if the manifold obtained by a Dehn surgery is a connected sum of lens spaces, then the cabling conjecture is true (\cite{G2010}).

\begin{conjecture}[Strong cabling conjecture]
A non-meridional essential planar surface in a knot exterior is either a disk or an annulus.
\end{conjecture}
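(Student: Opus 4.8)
The plan is to fix a non-trivial knot $K$---for the trivial knot $E(K)$ is a solid torus and Theorem~\ref{handlebody} already forces every essential surface to be a disk---together with a non-meridional essential planar surface $P \subset E(K)$ of boundary slope $r = p/q$ with $q \neq 0$, and to show that $n := |\partial P| \le 2$. First I would extract the homological constraint. Since $P$ is planar it is orientable, and as $E(K)$ is orientable it is two-sided, so $\partial P$ is coherently oriented and $[\partial P] = 0$ in $H_1(E(K)) = \mathbb{Z}\langle m\rangle$. Because the longitude is null-homologous, each boundary curve contributes $\pm p\,[m]$, so when $p \neq 0$ the positively and negatively oriented components must balance and $n$ is even; the residual slope $p = 0$ is longitudinal, which I would dispatch separately using that $0$-surgery on a non-trivial knot is irreducible (\cite{G1987a}). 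This reduces the task to excluding $n \geq 4$.

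For the core argument I would pass to the surgered manifold. Capping the $n$ boundary curves of $P$ with meridian disks of the surgery solid torus in $K(r)$ produces a closed orientable surface $\widehat P$ with $\chi(\widehat P) = (2-n) + n = 2$, i.e.\ a $2$-sphere. If $\widehat P$ were essential, then $K(r)$ would be reducible; granting the cabling conjecture \cite{GS1986} in the form already proven for the relevant knot classes, $K$ is cabled, $r$ is the slope of its cabling annulus, and the reducing sphere meets $K$ in exactly $2$ points, whence $n = 2$ and $P$ is an annulus (consistent with Theorem~\ref{classify_annuli}). If instead $\widehat P$ bounds a $3$-ball $B$ in $K(r)$, I would analyse the position of the surgery solid torus relative to $B$ and identify $B \cap E(K)$ as a compression region or a product region for $P$, contradicting incompressibility or non-boundary-parallelism.

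Because the capping argument leans on the cabling conjecture itself, a self-contained attack must instead intersect $P$ with a second essential surface. I would take a minimal-genus Seifert surface $F$ (boundary slope $0$) and, using the essential transversality theorem (Theorem~\ref{intersection}) in the irreducible and boundary-irreducible manifold $E(K)$, arrange $P \cap F$ to consist of loops and arcs essential in both. Capping $P$ to $\widehat P$ and forming the Gordon--Luecke intersection graphs $G_P$ on $\widehat P$ and $G_F$ on $F$---whose edges record the arcs of $P\cap F$ and whose fat vertices record the capping disks and the boundary intersections---I would invoke the parity rule, bound the number of faces via $\chi(\widehat P) = 2$, and hunt for Scharlemann cycles forcing the annular conclusion, in the spirit of the treatment of \cite{GL1989}.

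The hard part will be precisely what keeps this conjecture open. Breaking the circularity between the capping argument and the cabling conjecture requires either proving that $\widehat P$ is essential outright or carrying the graph combinatorics to completion for arbitrary $K$; in every known partial result this step is forced only under extra hypotheses (alternating, small or meridionally small, strongly invertible, or large bridge distance), and the general high-valence configurations of $G_P$ with $n \geq 4$ resist a uniform count. I expect no elementary argument to close this gap, and would regard a genuine proof as demanding either sutured-manifold machinery in the spirit of \cite{G1987a} or a substantial extension of the combinatorial methods of \cite{GL1989} so as to handle all non-meridional slopes simultaneously.
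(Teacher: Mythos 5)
You have not produced a proof, and indeed you could not have: the statement is the Strong cabling conjecture, which the paper presents as an open problem with no proof at all. The paper only records partial results immediately after stating it: the boundary slope of such a surface $P$ is integral (\cite{GL1987}), and $|\partial P|$ cannot be an odd integer at least $3$ nor equal to $4$ (\cite{KO2013}). So the honest verdict is that your proposal is a survey of attack routes rather than a proof, and your own closing assessment --- that the missing step is exactly what keeps the conjecture open --- is correct. Within that frame, your partial observations are sound: the homological argument that $[\partial P]=0$ in $H_1(E(K))=\mathbb{Z}$ forces an even number of boundary components when the slope is non-longitudinal is standard (though it is subsumed by the stronger cited results of \cite{GL1987} and \cite{KO2013}); the capping construction producing a $2$-sphere $\widehat P$ in the surgered manifold is the standard translation between essential planar surfaces and reducing spheres; and you correctly identified the circularity --- via Theorem~\ref{classify_annuli}, the Strong cabling conjecture implies the cabling conjecture of \cite{GS1986}, so no argument may assume the latter.

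Two smaller points. First, your dispatch of the longitudinal case is not actually complete as stated: Gabai's irreducibility of $0$-surgery (\cite{G1987a}) only rules out $\widehat P$ being essential there; the case where $\widehat P$ bounds a ball containing the surgery solid torus requires the same unresolved analysis you defer in the general case, so the slope $0$ case is not genuinely ``separate.'' Second, your final graph-theoretic program (intersecting $P$ with a Seifert surface via Theorem~\ref{intersection} and running Gordon--Luecke combinatorics in the spirit of \cite{GL1989}) is a reasonable description of how the known partial results such as \cite{KO2013} are obtained, but it is a plan, not an argument; since the paper itself offers nothing beyond the conjecture and those citations, there is no proof to compare yours against, and none should be claimed.
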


A non-meridional essential planar surface $P$ properly embedded in a knot exterior $E(K)$ has an integral boundary slope (\cite{GL1987}). 
$|\partial P|$ is not an odd integer greater than or equal to $3$ ({\cite[Lemma 3.5]{KO2013}}), and not $4$ ({\cite[Lemma 3.6]{KO2013}}).

\section{Knots and surfaces}

\subsection{Closed surfaces ($K\cap F=\emptyset$)}

\subsubsection{Essential closed surfaces}

We say that a knot is {\em small} if there does not exist an essential closed surface in its exterior, and that it is {\em large} if there exists an essential closed surface.
Essential closed surfaces in the exterior of a large knot are divided into peripherally compressible and peripherally incompressible.
For a peripherally compressible essential closed surface, the peripheral slope is defined from a peripherally compressing annulus (\cite{IO2000}), it is either meridional or integral ({\cite[Lemma 2.5.3]{CGLS1987}}).
A tangle decomposing sphere is obtained from a meridional peripherally compressible essential closed surface, and a coiled surface is obtained from an integral peripherally compressible essential closed surface.
Conversely, if there exists a peripherally compressible essential closed surface with boundary slope $\gamma$, then there exists a peripherally compressible essential closed surface with peripheral slope $\gamma$ (\cite{OT2003}).

\begin{theorem}[\cite{T1994},\cite{HT1985},\cite{O1984},\cite{M2011}]
Torus knots, $2$-bridge knots, Montesinos knots with length $3$, $2$-twisted torus knots are small.
\end{theorem}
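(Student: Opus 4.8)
The plan is to treat the four families separately and, in each case, to argue by contradiction: I assume a closed essential (hence incompressible, not boundary-parallel) surface $F$ lives in the exterior and derive a contradiction. The common engine is Theorem \ref{intersection}: after choosing a \emph{standard decomposing surface} adapted to each knot class, I put $F$ in general position with it, minimise the intersection, and use essentiality of the intersection $1$-manifold to pin down how $F$ sits in each piece of the decomposition. Theorem \ref{handlebody} is then the tool that forbids a closed surface from hiding inside a ball or handlebody summand.

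For torus knots, smallness is essentially immediate from the classification already recalled above: the only essential surfaces in $E(T(p,q))$ are the minimal-genus Seifert surface and the cabling annulus of Theorem \ref{classify_annuli} (\cite{T1994}), both of which have non-empty boundary, so no closed essential surface exists. Structurally the reason is that $E(T(p,q))$ is Seifert fibered over a disk with two exceptional fibers, and an incompressible surface there is isotopic to one that is vertical or horizontal; a vertical closed surface would project to a non-existent essential closed curve in the disk base, while a horizontal surface is a branched cover of the base and necessarily meets $\partial E(K)$.

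For $2$-bridge knots and for Montesinos knots of length $3$, I would use the tangle/branched-surface classification of \cite{HT1985} and \cite{O1984}. The idea is to cut the knot along the spheres of its rational-tangle decomposition, make $F$ transverse to them, and minimise $|F\cap(\text{spheres})|$. Each piece is a rational tangle whose incompressible surfaces are understood, and the resulting system of essential curves on the punctured decomposing spheres is carried by an explicit branched surface; one then reads off from the admissible edge-weight inequalities that every incompressible surface so produced carries non-zero boundary weight. For $2$-bridge knots this forces every incompressible surface to meet the boundary torus, so none is closed; for Montesinos knots the same bookkeeping shows that a closed incompressible surface can first appear only at length $\ge 4$, whence length $3$ is small. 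For $2$-twisted torus knots I would instead follow the direct argument of \cite{M2011}, decomposing the knot along its natural tangle sphere, analysing $F\cap S$ via Theorem \ref{intersection}, and eliminating the pieces with Theorem \ref{handlebody}.

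The hard part is the combinatorial heart shared by the two middle families: controlling the system of essential intersection curves on the punctured decomposing spheres and verifying, through the branched-surface edge-weight inequalities, that no admissible weighting corresponds to a surface with empty boundary. This is exactly where the sharp threshold between length $3$ (small) and length $\ge 4$ (possibly large) for Montesinos knots is forced, and it is the most delicate step; the torus-knot and $2$-twisted-torus cases are comparatively direct once the right decomposing surface is fixed.
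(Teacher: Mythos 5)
The first thing to say is that the paper contains no argument for this statement at all: it is a survey item, and its ``proof'' is precisely the list of citations \cite{T1994}, \cite{HT1985}, \cite{O1984}, \cite{M2011}. Your routing of the four families to exactly those references is therefore the same move the paper makes, and your torus-knot case is genuinely complete and correct: the paper itself records elsewhere that the only essential surfaces in a torus knot exterior are the minimal genus Seifert surface and the cabling annulus (\cite{T1994}), both of which have boundary, and your vertical/horizontal analysis of the Seifert fibration over the disk with two exceptional fibers is a sound independent way to see it.

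The genuine gap is in your ``common engine.'' Theorem \ref{intersection} requires \emph{both} surfaces to be incompressible and boundary-incompressible, but the standard decomposing surfaces you propose for the two middle families --- the $4$-punctured bridge sphere of a $2$-bridge knot, and the Conway spheres cutting a Montesinos knot into rational tangles --- are \emph{compressible} in the knot exterior: a rational tangle is trivial, so there is a disk in the tangle ball separating its two strings, and its boundary is an essential curve on the punctured sphere. Hence you cannot conclude that the intersection curves are essential on both sides. After minimizing $|F\cap S|$, incompressibility of the closed surface $F$ (plus irreducibility) only eliminates curves that are inessential in $S$ and essential in $F$; curves that are inessential in $F$ but essential in $S$ --- compressions of $S$ along disks lying in $F$ --- cannot be removed, and dealing with exactly these is the real content of \cite{HT1985} and \cite{O1984}. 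Accordingly, those papers do not run an essential transversality argument against tangle spheres: Hatcher--Thurston put the surface in Morse position with respect to the height function of the $4$-plat and read off branched surfaces from that, and Oertel exploits the structure of the Montesinos exterior (Seifert-fibered pieces and an explicit branched-surface analysis), in both cases doing a one-sided curve analysis in which the sphere side is allowed to compress. A smaller imprecision: for $2$-bridge knots the correct conclusion is not that every closed incompressible surface meets $\partial E(K)$ --- the peripheral torus is closed, incompressible, and disjoint from the boundary --- but that every closed incompressible surface is boundary-parallel, hence not essential. None of this damns the overall plan, since the hard combinatorics is deferred to the cited papers in any case, but the unifying mechanism you describe would not, as stated, produce the conclusion you attribute to it.
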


$2977$ knots with $12$ crossings or less are $1019$ large, and $1958$ small (\cite{BCT2012}).
For tunnel number $1$ knots, there exists a large knot having meridional peripherally incompressible essential closed surface in its exterior (\cite{EM2000}).
If any essential closed surface in the knot exterior $E(K)$ is meridional peripherally compressible, then $K$ is said to be {\em meridional}.

\begin{theorem}[\cite{LP1985},\cite{M1984},\cite{A1992},\cite{O1984},\cite{A1994},\cite{MO2010}]
$3$-braid knots, alternating knots, almost alternating knots, Montesinos knots, toroidally alternating knots, algebraically alternating knots are meridional.
\end{theorem}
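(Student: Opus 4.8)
The statement collects results proved family by family in the cited papers, but all of them follow one strategy: place an essential closed surface into a \emph{standard position} with respect to a distinguished $2$-sphere (or family of spheres) read off from the defining diagram, and then show that meridional incompressibility is impossible. The plan is to take the alternating case as the model and to obtain the remaining families by enlarging the same machinery.

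For an alternating knot I would start from a reduced alternating diagram on the projection sphere $S^2$ and insert a small ball (a \emph{bubble}) at each crossing, so that $S^2$ and the bubbles cut $S^3$ into two balls $B_\pm$, each meeting $K$ in trivial arcs (Menasco's construction \cite{M1984}). Given an essential closed surface $F$, first I would use Theorem \ref{intersection} to make $F\cap S^2$ a union of simple closed curves meeting each bubble in a standard family of saddles, and then minimize $|F\cap S^2|$. Incompressibility removes the curves bounding disks in $S^2-K$, and the alternating condition forces an innermost surviving curve to produce either a complexity-reducing isotopy or a disk meeting $K$ once with essential boundary on $F$. In the first case minimality is contradicted; in the second we obtain a meridional compression. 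If instead $F$ were both incompressible and meridionally incompressible, the saddle analysis would push $F$ entirely into one $B_\pm$, forcing $F$ to be compressible or boundary parallel, against essentiality. Hence every essential closed surface is meridionally compressible, and the meridional compressing disk, joined to a meridian disk of $N(K)$, yields the required meridional peripherally compressing annulus; thus $K$ is meridional.

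The next step is to run the same argument for the remaining families, each time adapting the construction. For almost alternating knots \cite{A1992} the diagram differs from an alternating one by a single dealternator, and for toroidally alternating knots \cite{A1994} it is alternating on a Heegaard torus; in both cases the extra feature contributes only bounded additional combinatorics to the saddle count, so the conclusion survives. For algebraically alternating knots \cite{MO2010} the distinguished surface is the union of the Conway spheres of the tangle decomposition with the alternating part of the diagram: an essential closed surface either compresses into a tangle, which is controlled by the structure inside the tangle balls, or meets the alternating part, where Menasco's argument reapplies. For Montesinos knots \cite{O1984} I would instead appeal to Oertel's classification via the arborescent (Conway sphere) decomposition: cut $E(K)$ along the Conway spheres, isotope $F$ to meet them essentially by Theorem \ref{intersection}, and check on each tangle piece that a surviving closed component is meridionally compressible. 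For closed $3$-braid knots \cite{LP1985} the right spheres are the meridian disks of the solid torus carrying the braid: intersecting $F$ with these disks and exploiting that only three strands are present pins down the intersection pattern and again forces meridionality.

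The hard part in every case is the combinatorial bookkeeping inside the standard position: controlling how $F$ meets the bubbles in saddles, ruling out the meridionally incompressible alternative without introducing new intersection curves, and verifying that the exceptional structure of each family (the dealternator, the torus, the tangles, the third strand) opens no loophole. This Euler-characteristic and innermost-disk analysis is the technical heart of Menasco's original proof, and it must be re-established for each generalization.
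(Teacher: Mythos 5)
The survey itself gives no proof of this theorem: it is a compilation statement whose proof is, by design, the six cited papers. Measured against those sources, your outline is faithful. Menasco's bubble/standard-position argument \cite{M1984} is indeed the model; Adams and coauthors adapt it to the dealternator \cite{A1992} and to alternating diagrams on a Heegaard torus \cite{A1994}; Oertel \cite{O1984} treats Montesinos (star) links via the Conway-sphere decomposition into rational tangles; Lozano--Przytycki \cite{LP1985} work with the disk fibration of the solid torus carrying the closed $3$-braid; and the algebraically alternating case \cite{MO2010} mixes the tangle decomposition with Menasco's machinery. Your final step, converting a meridional compressing disk $D$ into the annulus $D\cap E(K)$, is exactly how meridional compressibility translates into the meridional peripheral compressibility demanded by the paper's definition of a meridional knot.

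Two corrections, neither fatal. First, you cannot invoke Theorem \ref{intersection} to put $F$ in standard position with respect to the projection sphere (or, in the Montesinos case, the Conway spheres): that theorem requires both surfaces to be incompressible and boundary incompressible in $E(K)$, whereas the punctured projection sphere $S^2\cap E(K)$ is highly compressible. Standard position is obtained instead by the general-position-plus-minimization argument you also describe, together with Menasco's specific lemmas controlling how curves of $F\cap S^2$ traverse bubbles (every curve must meet a bubble, no curve may cross one in a forbidden pattern, and so on). Second---as you acknowledge---those lemmas and their analogues for each generalization (the dealternator, curves that are essential on the Heegaard torus, the tangle pieces, the three-strand disks) constitute essentially the entire content of the cited papers, so what you have written is a correct roadmap of the literature rather than a self-contained proof. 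Since the survey treats the theorem exactly the same way, this is an acceptable reconstruction, but the combinatorial core remains outsourced to the references rather than supplied.
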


Here, toroidally alternating knots are a class containing alternating knots and almost alternating knots, Montesinos knots, and algebraically alternating knots are a class containing algebraic knots and alternating knots.

We say that a surface $F$ embedded in the knot exterior $E(K)$ is {\em free} if each component of $E(K)-\text{int}N(F)$ is a handlebody.

\begin{theorem}[\cite{MOz2010}]\label{meridional_free}
Any non-meridional essential surface embedded in the exterior of a meridional knot is free.
\end{theorem}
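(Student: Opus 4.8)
The plan is to argue component by component. Write $W$ for a connected component of $E(K)\setminus \mathrm{int}\,N(F)$; the goal is to show that each such $W$ is a handlebody. Since $E(K)$ is irreducible and (we may assume $K$ is non-trivial, the trivial knot carrying no non-meridional essential surface) boundary irreducible, and since $F$ is essential, $W$ is irreducible, and $\partial W$ is a \emph{closed} orientable surface, assembled from copies of $\partial N(F)$ (using $\partial N(F)$ rather than $F$ lets me treat the non-orientable case at the same time) glued along $\partial F$ to annuli $T\cap W\subset \partial E(K)$ whose cores all carry the boundary slope $\gamma$ of $F$. The hypothesis that $F$ is non-meridional says precisely that $\gamma$ is not the meridian.

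Suppose, for contradiction, that some $W$ is not a handlebody. The structural input I would invoke is the standard characterisation of handlebodies: a compact orientable irreducible $3$-manifold with non-empty boundary is a handlebody if and only if its fundamental group is free; equivalently, if it is not a handlebody then maximally compressing $\partial W$ inside $W$ cannot reduce it to a union of balls, so there remains a piece with incompressible boundary and non-free $\pi_1$. From this I extract a closed surface $S\subset \mathrm{int}\,W$ that is incompressible in $W$. Using the irreducibility of $E(K)$ and the incompressibility of $F$, I would then arrange $S$ to be incompressible and not boundary parallel in $E(K)$ itself, that is, an \emph{essential closed surface} in the knot exterior. (If $K$ is small, the mere existence of such an $S$ is already the desired contradiction.)

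Now I use that $K$ is meridional: the essential closed surface $S$ must be meridionally peripherally compressible, so there is a peripherally compressing annulus $A$ with one boundary an essential, not boundary parallel loop on $S$, and the other boundary a \emph{meridian} $\mu$ on $\partial E(K)$. Putting $A$ transverse to $F$ and running an innermost-loop, outermost-arc analysis in the spirit of Theorem~\ref{intersection} (exploiting that $F$ is incompressible and boundary incompressible), I reduce to two cases. If $A\cap F=\emptyset$, then $A\subset W$, whence $\mu\subset T\cap W$ would be isotopic to the non-meridional slope $\gamma$, contradicting that $\mu$ is a meridian. If $A\cap F\neq\emptyset$, an outermost arc of $A\cap F$ whose complementary boundary arc lies on $\mu$ cuts off a disk in $A$ realising a boundary compression of $F$, contradicting the boundary incompressibility of $F$; outermost arcs whose complementary boundary arc lies on $S$, and inessential loops, are removed by isotopies chosen to minimise $|A\cap F|$. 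Every configuration thus collapses to one of these two contradictions.

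The part I expect to be the genuine obstacle is the middle step: manufacturing, from the \emph{mere} failure of the handlebody property, a surface that is essential \emph{in $E(K)$} rather than only incompressible in the cut-open piece $W$, and in particular ruling out that the extracted surface is boundary parallel---which would force $W\cong T^2\times I$ and require a separate argument using the essentiality (non-boundary-parallelism) of $F$. The concluding slope argument is comparatively routine once the essential intersection with $A$ is arranged; its real content is the incompatibility, forced by meridionality, between the meridian that $A$ reaches and the non-meridional slope $\gamma$ recorded along $T\cap W$.
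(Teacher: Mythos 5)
The survey itself gives no proof of this theorem---it is stated with a citation to \cite{MOz2010}---so there is no in-paper argument to compare against; what you have written is essentially a reconstruction of the argument of that reference, and its structure is correct: cut along $F$, extract from a non-handlebody complementary piece $W$ a closed surface $S$ incompressible in $W$ (via a maximal compression body for $\partial W$ in $W$), promote $S$ to an essential closed surface in $E(K)$, and play the meridional peripherally compressing annulus $A$ against the non-meridional slope of $\partial F$. The step you flag as the obstacle does go through exactly as you indicate. For incompressibility: a compressing disk $D$ for $S$ in $E(K)$ has $\partial D\subset S$ disjoint from $F$, so $D\cap F$ consists of loops only, which are removed using incompressibility of $F$ and irreducibility of $E(K)$; then $D\subset W$, contradicting incompressibility of $S$ in $W$. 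For boundary parallelism: if $S$ were boundary parallel, the product region $R\cong T^2\times[0,1]$ between $S$ and $\partial E(K)$ would contain all of $F$ (since $\partial F\subset\partial E(K)$ and $F\cap S=\emptyset$), and an incompressible, boundary incompressible surface in $T^2\times[0,1]$ with all of its boundary on one boundary torus is parallel into that torus, so $F$ would be boundary parallel in $E(K)$---contradicting essentiality of $F$. No separate analysis of $W$ (in particular no claim that $W\cong T^2\times[0,1]$) is needed.

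Two slips to repair, neither fatal. First, your reduction to non-trivial $K$ is wrongly justified: the exterior of the trivial knot \emph{does} contain a non-meridional essential surface, namely the disk, whose boundary slope is $0$ (by Theorem \ref{handlebody} it is the only essential surface in the solid torus $E(K)$); the theorem holds there because cutting along that disk leaves a $3$-ball, and only after disposing of this case may you assume $K$ non-trivial, hence $E(K)$ boundary irreducible. Second, your case $A\cap F=\emptyset$ never occurs: the meridian $\mu$ and $\partial F$ are essential curves of distinct slopes on the torus $\partial E(K)$, so $\mu\cap\partial F\neq\emptyset$, and $A\cap F$ must contain arcs, all of whose endpoints lie on $\mu$ (none can end on the $S$-side of $\partial A$, since $S\cap F=\emptyset$; likewise your sub-case of outermost arcs meeting $S$ is vacuous). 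The contradiction therefore always comes from an outermost arc of $A\cap F$: if it is essential in $F$ it yields a boundary compression, contradicting boundary incompressibility of $F$; if inessential, it can be removed using irreducibility and boundary irreducibility of $E(K)$, contradicting minimality of $|A\cap F|$. The slope comparison you place in the disjoint case is thus not where the argument ends; it is subsumed by the arc analysis.
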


Small knots are similarly defined also for 3-manifolds other than $S^3$.
A $3$-manifold which does not contain essential closed surface is said to be {\em small}.

\begin{conjecture}[The Lopez conjecture \cite{L1992}]
There exists a small knot in every small closed $3$-manifold.
\end{conjecture}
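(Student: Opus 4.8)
The plan is to reduce the conjecture to producing, for a given small closed $3$-manifold $M$, a single knot $K\subset M$ whose exterior $E(K)$ contains no essential closed surface, and to build such a $K$ from a Heegaard splitting of $M$. First I would record the easy cases as a sanity check on the construction: since $M$ is small it contains no incompressible closed surface, in particular no essential sphere, so $M$ is irreducible; when $M$ is a lens space, with genus-one Heegaard splitting $M=V_1\cup_\Sigma V_2$, the core $K$ of $V_1$ has the solid torus $V_2$ as its exterior, and by Theorem~\ref{handlebody} a solid torus carries no essential closed surface, so $K$ is small. This is exactly the analogue of the unknot in $S^3$ (Theorem~\ref{3-sphere}), and the whole point is to imitate it in higher Heegaard genus, where $E(K)$ can never be a handlebody because its boundary is a single torus.

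For general $M$ I would fix a minimal-genus Heegaard splitting $M=H_1\cup_\Sigma H_2$ and take $K$ to be a knot obtained from a spine of $H_1$ by a sufficiently complicated tunnel-like modification, chosen so that the resulting Heegaard splitting of $E(K)$ has large Hempel distance. The design goal is that, for a splitting of large distance, any essential closed surface $S\subset E(K)$ has genus bounded below by roughly half the distance, so that all surfaces of small genus are excluded at one stroke. Now suppose $E(K)$ nonetheless contains an essential closed surface $S$, which (after arranging $K$ to be geometrically essential) has positive genus; put $S$ in general position with respect to $K$ minimizing $|S\cap K|$ via Theorem~\ref{general position}. Viewing $S$ inside the closed irreducible manifold $M$, smallness of $M$ forces $S$ to be compressible in $M$, while incompressibility of $S$ in $E(K)$ forces every compressing disk in $M$ to meet $K$. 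An innermost-disk analysis of the intersection of such a compressing disk with $S$, in the spirit of Theorem~\ref{intersection}, should then let me trade a compression of $S$ in $M$ for a meridional compression threaded by $K$, and iterate toward a contradiction with minimality.

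The hard part is controlling essential surfaces of arbitrarily large genus: the distance bound removes the low-genus ones but says nothing in high genus, whereas smallness of $M$ only supplies compressibility in $M$, never a bound on genus. Reconciling these is precisely the study of meridional (equivalently peripheral) incompressibility of closed surfaces, which is delicate; I expect the crux to be proving that the minimality of $|S\cap K|$, together with the chosen position of $K$, prevents $K$ from simultaneously piercing all the compressing disks of every high-genus $S$. A secondary obstacle is uniformity: the construction of $K$ must succeed for \emph{every} small $M$ at once rather than case by case, and it is not clear that a single Heegaard-splitting recipe yields a provably small exterior in all cases, which is presumably why the statement remains a conjecture.
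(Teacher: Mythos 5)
You cannot be checked against ``the paper's own proof'' here, because there is none: this statement is an open conjecture. The paper records it as such, attributing it to Lopez \cite{L1992}, and offers only partial progress, namely constructions of small knots in \emph{some} small closed $3$-manifolds (\cite{M2004}, \cite{QW2006}). So any complete argument you gave would be a new theorem, not a reproduction of the paper; and in fact your proposal is not a complete argument but a strategy sketch, as you yourself concede in the final paragraph.

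Beyond that, the sketch has a concrete fatal gap. The contradiction argument in your second paragraph --- $S$ is compressible in $M$ by smallness, every compressing disk meets $K$ by incompressibility in $E(K)$, then an innermost-disk iteration ``toward a contradiction with minimality'' --- nowhere uses the special high-distance position of $K$; it invokes only smallness of $M$ and essentiality of $S$ in $E(K)$. If that iteration worked as stated, it would prove that \emph{every} knot in \emph{every} small closed $3$-manifold is small, which is false: $S^3$ is small (Theorem~\ref{3-sphere}), yet large knots in $S^3$ abound (the paper notes that $1019$ of the $2977$ knots with at most $12$ crossings are large, and \cite{EM2000} gives tunnel number one examples). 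So the iteration must break down, and the entire burden falls back on the unproved claim that your particular $K$ obstructs it --- which is exactly the open problem. Two smaller points: a closed surface $S\subset E(K)$ satisfies $S\cap K=\emptyset$, so ``minimizing $|S\cap K|$'' is vacuous; what you want is to minimize the intersection of compressing disks with $K$ (the waist of \cite{MOz2010}). And the distance bound you invoke (in the spirit of \cite{BS2005}, $d(K,F)\le 2g(S)+|\partial S|$) excludes only surfaces of genus below roughly half the distance; no hypothesis of this type can exclude essential surfaces of all genera, so high distance alone can never certify that $E(K)$ is small. The proposal is a reasonable plan of attack, but it does not prove the conjecture, and the conjecture remains open.
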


Some construction of small knots in small closed $3$-manifolds are given in \cite{M2004}, \cite{QW2006} etc.

\subsubsection{Invariants for essential closed surfaces}

\begin{definition}[\cite{MOz2010}]
We define the {\em waist} of a knot $K$ as following.
\[
waist(K)=\max_{F\in\mathcal{F}} \min_{D\in\mathcal{D}_F} |D\cap K|
\]
Here, $\mathcal{F}$ denotes the set of all incompressible closed surfaces in $S^3-K$, and $\mathcal{D}_F$ denotes the set of all compressing disks for $F$ in $S^3$.
We define $waist(K)=0$ for the trivial knot $K$.
A knot $K$ is meridional if and only if $waist(K)=1$.
\end{definition}

\begin{theorem}[\cite{MOz2010}]\label{waist}
It holds that $\displaystyle waist(K)\le \frac{trunk(K)}{3}$.
\end{theorem}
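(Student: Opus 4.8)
The plan is to fix both optimal objects and then manufacture a single good compressing disk. First I would choose a Morse position of $K$ realizing the trunk, so that $\max_{t}|h^{-1}(t)\cap K|=trunk(K)=:n$, and I would choose an incompressible closed surface $F$ realizing the waist, so that every compressing disk $D$ for $F$ in $S^3$ satisfies $|D\cap K|\ge waist(K)$. Since the $3$-sphere contains no essential surface (Theorem \ref{3-sphere}), the surface $F$, being a closed surface other than a ball-bounding sphere, must be compressible in $S^3$; hence compressing disks exist. The whole theorem therefore reduces to producing \emph{one} compressing disk $D$ for $F$ with $|D\cap K|\le n/3$, since then $waist(K)\le|D\cap K|\le n/3$.

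Next I would extract such a disk from a level sphere. Using the transversality theorem (Theorem \ref{general position}) I would put $F$ in general position with respect to $h$, and I would select a regular level sphere $P=h^{-1}(t_{0})$ at which the width is maximal, so that $P$ meets $K$ in exactly $n$ points. The intersection $F\cap P$ is a family of loops; as usual I would first clean it up by an innermost-disk argument on $F$ together with the irreducibility of $S^3$, discarding every loop that is inessential on $F$ and cobounds a $K$-free ball, so that the surviving loops are essential on $F$. Because $F$ is incompressible in $E(K)$ but compressible in $S^3$, one verifies that at least one such essential loop survives, and any loop $\ell$ of $F\cap P$ essential on $F$ bounds, on the sphere $P$, a subdisk whose interior is disjoint from $F$; such a subdisk is a genuine compressing disk for $F$, and its intersection number with $K$ equals exactly the number of the $n$ punctures of $P$ that it encloses.

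The quantitative heart of the argument is to choose this disk so that it encloses at most $n/3$ of the punctures. The loops $F\cap P$ cut the $n$-punctured sphere $P$ into planar regions, and each essential loop $\ell$ gives two complementary subdisks of $P$, either of which compresses $F$; taking the emptier of the two already yields a disk with at most $n/2$ punctures. To improve $1/2$ to $1/3$ I would bring in the arc structure of $K$: the sphere $P$ bounds two balls $B^{+},B^{-}$, and the $n$ punctures are the endpoints of the arcs $K\cap B^{\pm}$. An outermost-arc analysis in these balls should produce a third compressing disk, disjoint from the first two, so that the three disks enclose pairwise disjoint sets of punctures totalling at most $n$; averaging then gives one disk with at most $n/3$ punctures. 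I expect this last step, passing from the easy factor $1/2$ to the asserted factor $1/3$ by exploiting the tangles $K\cap B^{\pm}$, to be the main obstacle, and the place where the hypothesis that $P$ realizes the trunk (rather than an arbitrary level sphere) is essential.
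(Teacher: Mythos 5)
Your opening reduction is fine and matches how the proof in \cite{MOz2010} begins: fix a Morse position realizing $trunk(K)=n$, fix an incompressible closed surface $F$ realizing $waist(K)$, note via Theorem \ref{3-sphere} that $F$ is compressible in $S^3$, and reduce everything to producing one compressing disk meeting $K$ in at most $n/3$ points. The gaps come after that, and they are genuine. First, your cleanup of $F\cap P$ cannot achieve what you claim: a loop of $F\cap P$ that is inessential in $F$ but whose innermost disk on $P$ contains punctures of $K$ (picture the small level loops just below a maximum of $h|_F$ over which strands of $K$ pass) cobounds only spheres that intersect $K$, so no isotopy of $F$ in $E(K)$ removes it, and after your cleanup the innermost loops on $P$ may all be of this kind. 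Likewise, a loop essential in $F$ need not bound a subdisk of $P$ with interior disjoint from $F$ (other loops of $F\cap P$ can be nested inside both of its complementary disks), so neither your claim that essential loops give compressing disks nor your ``easy'' bound $n/2$ (which would need \emph{both} complementary disks to be compressing disks) is justified; indeed nothing in your setup even guarantees $F\cap P\neq\emptyset$ for the single level $P$ you chose. Second, the step you yourself flag as the main obstacle --- producing a third disjoint compressing disk by an ``outermost-arc analysis'' of $K\cap B^{\pm}$ --- is exactly the content of the theorem and is left as a hope; moreover an outermost disk for the arcs of $K\cap B^{\pm}$ has boundary on $K\cup P$, not on $F$, so it is not a compressing disk for $F$ at all.

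The proof in \cite{MOz2010} extracts the factor $3$ from the surface, not from the tangles $K\cap B^{\pm}$. Since $F$ has genus at least one, $h|_F$ has saddle points, and the essential Morse position machinery (the tool alluded to in the introduction of this survey) lets one isotope $F$ so that a saddle is essential: the three loops in which a pair-of-pants neighborhood of the saddle meets nearby level spheres are all essential in $F$. Pushed into a single level sphere, these three loops bound three disks with pairwise disjoint interiors (the complement of a pair of pants in a $2$-sphere consists of three disks). A surgery argument then shows each such disk $D_i$ meets $K$ at least $waist(K)$ times even though its interior may meet $F$: compress $D_i$ along innermost loops of $D_i\cap F$, replacing inessential loops by the disks they bound on $F$ (these lie on $F$, hence miss $K$, so intersections with $K$ never increase) until one reaches an honest compressing disk, which meets $K$ in at least $waist(K)$ points by the choice of $F$. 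Finally --- and this is what the trunk-realizing position actually buys, rather than the existence of your maximal level $P$ --- \emph{every} level sphere meets $K$ in at most $trunk(K)$ points, so the three disjoint disks give $3\,waist(K)\le trunk(K)$. Your skeleton of ``three disjoint compressing disks with disjoint puncture sets'' is the right one, but filling it requires the Morse theory of $h|_F$ and essential saddles, which is entirely absent from your proposal.
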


\begin{definition}[\cite{O2000}]
Let $S$ be a surface in a knot exterior $E(K)$.
Let $i:S\to E(K)$ be the inclusion map and $i_*:H_1(S)\to H_1(E(K))$ be the induced homomorphism.
Since $Im(i_*)$ is a subgroup of an infinite cyclic group $H_1(E(K))=\mathbb{Z}$ which has a meridian as a generator, there exists an integer $m$ such that $Im (i_*)= m \mathbb{Z}$.
Then, we define the {\em order} $o(S)$ of $S$ as $|m|$.
\end{definition}

For an essential closed surface $S$ in the exterior of a meridional knot, we have $o(S)=1$ as $waist(K)=1$.
And, for an essential closed surface in the fibered knot exterior, we have $o(S)\ne 0$ by Theorem \ref{non-free}.

\begin{theorem}[\cite{O2002}]\label{positive_closed}
For an essential closed surface $S$ in the positive knot exterior, we have $o(S)\ne 0$.
\end{theorem}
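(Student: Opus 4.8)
The plan is to reinterpret the condition $o(S)=0$ homologically and then to reduce everything to Theorem~\ref{handlebody} by playing $S$ against a Seifert surface coming from a positive diagram. First I would note that $o(S)=0$ means exactly that the composite $\pi_1(S)\to\pi_1(E(K))\to H_1(E(K))=\mathbb{Z}$ is trivial, i.e. every loop on $S$ has linking number $0$ with $K$; equivalently, $S$ lifts to the infinite cyclic cover $\widetilde{X}\to E(K)$, and its lift $\widetilde{S}$ is again a compact surface homeomorphic to $S$. The surface I would test against $S$ is the Seifert surface $F$ obtained by applying Seifert's algorithm to a positive diagram of $K$. Positivity enters precisely here: a positive diagram is homogeneous, so this canonical surface realizes the genus of $K$, whence $F$ is of minimal genus and therefore incompressible; combined with the cited lemma (\cite[Lemma 1.10]{W1967}) on incompressible boundary-compressible surfaces, $F$ is also boundary incompressible, hence essential. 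At the same time, being a Seifert's-algorithm surface, $F$ is \emph{free}: its complement $M_F=E(K)-\mathrm{int}\,N(F)$ is a handlebody. Thus a single surface $F$ supplies both the essentiality needed for Theorem~\ref{intersection} and the handlebody structure needed for Theorem~\ref{handlebody}.

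Next I would assume $o(S)=0$ for contradiction and pass to the cover. Cutting $E(K)$ along $F$ and regluing infinitely many copies gives $\widetilde{X}=\bigcup_{i\in\mathbb{Z}}Y_i$, where each $Y_i$ is a copy of the handlebody $M_F$ and consecutive copies are identified along lifts $F_i$ of $F$. Since $o(S)=0$, the lift $\widetilde{S}$ is compact and therefore meets only finitely many $Y_i$. After isotoping $S$ so that $|S\cap F|$ is minimal, Theorem~\ref{intersection} makes every component of $S\cap F$ a loop essential in both $S$ and $F$ (there are no arc components, since $\partial S=\emptyset$ while $\partial F\subset\partial E(K)$). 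Consequently the boundary curves of each piece $\widetilde{S}\cap Y_i$ are essential in $\widetilde{S}$ and in the faces $F_i$.

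The heart of the argument is an outermost-piece analysis. Let $Y_N$ be the largest-index copy meeting $\widetilde{S}$, and put $P=\widetilde{S}\cap Y_N$. Because $\widetilde{S}$ misses $Y_{N+1}$, the surface $P$ is properly embedded in the handlebody $Y_N$ with $\partial P\subset F_{N-1}$. Using incompressibility of $S$ and minimality of $|S\cap F|$, I would verify that $P$ is incompressible, boundary incompressible, and not boundary parallel in $Y_N$: a compression of $P$ would compress $\widetilde{S}$, while a boundary compression or a parallelism into $F_{N-1}$ would push $\widetilde{S}$ across $F_{N-1}$ and so reduce $|S\cap F|$ downstairs. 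Hence $P$ is essential in $Y_N$, and Theorem~\ref{handlebody} forces $P$ to be a disk. Projecting back to $E(K)$, its boundary $\partial P$ is then a loop that is essential in $F$ yet bounds a disk in $M_F$ meeting $F$ only along its boundary --- that is, a compressing disk for $F$. This contradicts the incompressibility of $F$, so $o(S)\ne0$. The degenerate case $S\cap F=\emptyset$ is even faster: $S$ would be a closed essential surface inside the handlebody $M_F$, impossible by Theorem~\ref{handlebody} together with irreducibility. This parallels the fibered case, where $\widetilde{X}=F\times\mathbb{R}$ has free fundamental group and no closed incompressible surface can inject.

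The step I expect to be the main obstacle is showing that the outermost piece $P$ is genuinely \emph{essential} in $Y_N$ --- in particular, ruling out boundary compressions and boundary parallelisms --- so that Theorem~\ref{handlebody} applies and returns a disk. This is where the minimality of $|S\cap F|$ must be converted into explicit isotopies of $S$, and one has to check that each such move strictly decreases the intersection rather than merely exchanging curves. A secondary point needing care is the input about positive diagrams, namely that Seifert's algorithm on a positive diagram yields a surface that is simultaneously minimal genus (hence incompressible) and free. Once these facts are secured, the homological reformulation of $o(S)$ and the outermost-disk contradiction fit together routinely.
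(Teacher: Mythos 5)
First, a framing remark: this survey only \emph{quotes} Theorem \ref{positive_closed} from \cite{O2002} and contains no proof of it, so there is no internal argument to compare yours against; what follows assesses your proposal on its own terms and against the method of the cited source. Your preparation is sound: the canonical Seifert surface $F$ of a positive diagram is of minimal genus (positive diagrams are homogeneous, so Cromwell's theorem applies), hence incompressible and essential, and it is free; $o(S)=0$ does mean that $S$ lifts homeomorphically into the infinite cyclic cover $\widetilde{X}=\bigcup_i Y_i$ assembled from copies of the handlebody $E(K)-\mathrm{int}\,N(F)$; and your proof that the outermost piece $P=\widetilde{S}\cap Y_N$ is incompressible in $Y_N$ (via Theorem \ref{intersection}) is correct. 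The gap is exactly the step you flagged, and it is fatal rather than technical: minimality of $|S\cap F|$ does \emph{not} rule out boundary compressions of $P$. If the arc $\alpha=D\cap P$ of a boundary-compressing disk $D$ joins two \emph{distinct} components of $\partial P$, then pushing $S$ across $D$ merges them and lowers $|S\cap F|$ by one, as you intend; but if $\partial\alpha$ lies on a \emph{single} component of $\partial P$, the same move splits that curve into two and \emph{raises} $|S\cap F|$ by one, so minimality is silent. The bad case is unavoidable: by Theorem \ref{handlebody}, an incompressible piece that is neither a disk nor boundary-parallel \emph{must} be boundary-compressible, and whenever $\partial P$ is connected (say $P$ is a once-punctured torus running over a handle of $Y_N$, a configuration nothing in your setup forbids) every one of its boundary compressions is of the bad kind. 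Your argument therefore ends at ``$P$ is incompressible with connected boundary in a handlebody'' with no contradiction in sight.

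This gap also cannot be patched at the level of generality you work in, because positivity enters your proof only through the single statement ``$K$ admits a free incompressible Seifert surface.'' If the outermost-piece argument ran from that hypothesis, it would prove that \emph{any} knot with a free incompressible Seifert surface has no essential closed surface of order $0$; by Theorem \ref{non-free} this is the assertion that a knot bounding one free incompressible Seifert surface bounds no non-free incompressible Seifert surface at all. That rigidity is far stronger than Theorem \ref{positive_closed} — it would apply to every knot whose free genus equals its genus and would reduce the theorem to a one-line corollary of \cite{O2000} plus Cromwell — and it is not a theorem one can expect from soft cut-and-paste: Theorem \ref{non-free} is stated for individual surfaces precisely because freeness is not an all-or-nothing property of a knot. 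This is reflected in how the literature is organized: the proof in \cite{O2002} does not factor through the canonical surface alone, but argues diagrammatically, putting the closed surface in standard position with respect to the projection sphere and the crossing bubbles of a positive diagram so that the positivity of each crossing enters the combinatorics; and the freeness statement (incompressible Seifert surfaces of positive knots are free) is then deduced \emph{from} Theorem \ref{positive_closed} together with Theorem \ref{non-free}, exactly as this survey records after Theorem \ref{non-free} — i.e., it lives downstream of the theorem, whereas your argument tries to drive the proof from the upstream side with strictly weaker input.
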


\subsubsection{Tunnel numbers}

For Heegaard splittings of knot exteriors, there is a survey in \cite{M2007}.
For torus knots (\cite{BRZ1988}), $2$-bridge knots (\cite{K1999}), alternating knots (\cite{L2005}), satellite knots (\cite{MS1991}, \cite{EM1994}), the classification of unknotting tunnels has been completed in the case that tunnel number $1$.
Satellite knots with tunnel number $1$ and genus $1$ were determined, and the following was conjectured (\cite{GT1999}).

\begin{theorem}[Goda--Teragaito conjecture \cite{S2004}]
A tunnel number $1$ and genus $1$ knot is either a satellite or $2$-bridge knot.
\end{theorem}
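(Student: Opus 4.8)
The plan is to dispose of the satellite alternative at the outset and reduce the problem to showing that a non-satellite such knot is $2$-bridge. A genus one knot is nontrivial, so by the knot hyperbolization theorem $K$ is a torus knot, a satellite knot, or hyperbolic. The only genus one torus knot is $T(2,3)$, since the genus of $T(p,q)$ is $(p-1)(q-1)/2$, and $T(2,3)$ is $2$-bridge; and if $K$ is a satellite we are already done. So I would assume $K$ is hyperbolic, so that $E(K)$ contains no essential torus and, by Theorem \ref{classify_annuli}, no essential annulus, and aim to prove $K$ is $2$-bridge, deriving a contradiction with any configuration that would produce such a torus or annulus. As a consistency check, the fibered genus one knots are exactly the trefoil and the figure-eight knot, both $2$-bridge.

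Next I would bring the two natural surfaces into play. Tunnel number one gives a genus two Heegaard splitting $E(K)=C\cup_\Sigma W$, where $W=S^3\setminus \text{int}\,N(K\cup\tau)$ is a genus two handlebody for an unknotting tunnel $\tau$, $C=N(\partial E(K))\cup N(\tau)$ is a genus two compression body with $\partial_- C=\partial E(K)$, and $\Sigma$ is a closed genus two surface. Let $F$ be a minimal genus Seifert surface, a once-punctured torus; being of minimal genus it is incompressible and boundary incompressible, hence essential in $E(K)$, and $\partial F$ is a longitude on $\partial E(K)$. I would put $F$ transverse to $\Sigma$ by Theorem \ref{general position} and take $|F\cap\Sigma|$ minimal over isotopies of $F$.

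The heart of the argument is to analyze the pieces $F\cap W$ and $F\cap C$. A standard innermost disk / outermost arc argument, exactly as in the proof of Theorem \ref{intersection}, using the incompressibility and boundary incompressibility of $F$ together with the minimality of $|F\cap\Sigma|$, lets me assume that neither piece is compressible or boundary compressible in its side of the splitting. Theorem \ref{handlebody} then forces every component of $F\cap W$ to be a disk, and by its compression body refinement every component of $F\cap C$ is a disk or a vertical annulus running from $\Sigma$ to $\partial E(K)$; since $\partial F$ is the longitude, such annuli meet $\partial E(K)$ in parallel longitudes. Reassembling these rigid pieces into the once-punctured torus $F$ and tracking how they sit on the spine $K\cup\tau$, I would argue that the only way to build a genus one surface from them is a configuration presenting $K$ with two maxima, i.e. a $2$-bridge presentation, unless the pieces glue up to reveal an essential annulus or torus, which hyperbolicity has already excluded. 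The alternative route, probably closer to the original argument, replaces this cut-and-paste bookkeeping by comparing the sweepout of $E(K)$ by $\Sigma$ with the sweepout by $F$ and reading the structure off the Rubinstein--Scharlemann graphic.

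The main obstacle is this core combinatorial step. A disk-and-annulus decomposition of $F$ by $\Sigma$ (or a generic region of the graphic) admits many a priori configurations, and essentially all the work lies in eliminating the ones that neither collapse to a $2$-bridge picture nor produce an essential annulus or torus. Controlling the arcs and loops of $F\cap\Sigma$ on the genus two surface, in particular their slopes against the meridian disks of $W$ and $C$ and how they interact with the single puncture of $F$, is where the genus one hypothesis must be used decisively, and where the delicate case analysis lives. By contrast, the reductions of the first two paragraphs are routine, so I expect the entire difficulty to be concentrated here.
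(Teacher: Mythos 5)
The survey you are being checked against contains no proof of this theorem; it is quoted from Scharlemann's paper \cite{S2004}, whose proof occupies roughly sixty pages. So the question is whether your sketch could be completed, and as written it cannot: the structural claim at its heart is impossible. You assert that an innermost-disk/outermost-arc argument ``exactly as in the proof of Theorem \ref{intersection}'' lets you assume each component of $F\cap W$ and $F\cap C$ is incompressible and boundary incompressible in its side, whence by Theorem \ref{handlebody} and its compression-body refinement the pieces are disks in $W$ and disks or vertical annuli in $C$. But $F$ is a once-punctured torus, so $\chi(F)=-1$, while $F\cap\Sigma$ consists of circles, so $\chi(F)=\chi(F\cap W)+\chi(F\cap C)$; disks have $\chi=1$ and vertical annuli have $\chi=0$, so your claimed configuration would force $\chi(F)\ge 0$. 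No isotopy can ever produce it. The source of the error is that Theorem \ref{intersection} requires \emph{both} surfaces to be incompressible and boundary incompressible, whereas the Heegaard surface $\Sigma$ is compressible into both $W$ and $C$. The most one can arrange (using strong irreducibility, cf.\ the footnote to Theorem \ref{intersection}) is that the curves of $F\cap\Sigma$ are essential in both surfaces, i.e.\ that the pieces are incompressible; a boundary compression of $F\cap W$ along $\Sigma$ is not a boundary compression of $F$ in $E(K)$, so boundary incompressibility of $F$ gives no control over it, and the Euler characteristic count shows that pieces of negative Euler characteristic --- hence boundary-compressible pieces --- must persist no matter how $F$ is isotoped.

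Even setting this aside, your last paragraph concedes that the passage from a decomposition of $F$ to a $2$-bridge presentation of $K$ is left open; that ``core combinatorial step'' is not a finishing detail but essentially the entire content of \cite{S2004}, which confronts precisely the phenomenon your sketch tries to define away (subsurfaces of negative Euler characteristic on both sides of the splitting) using sweep-outs and graphics together with thin position and the tunnel-leveling theorem of \cite{GST2000} (Theorem \ref{leveling_tunnel} in this survey). Your opening reductions are fine --- the only genus one torus knot is the trefoil, which is $2$-bridge, and the satellite case is one of the allowed conclusions --- but they are the trivial part of the theorem, and note also that hyperbolicity is then only used to exclude essential annuli and tori, which by itself comes nowhere near forcing bridge number two.
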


For a genus $2$ Heegaard surface which is obtained from an unknotting tunnel $\tau$, the Hempel distance $d(\tau)$ is defined.
If $d(\tau)>5$, then unknotting tunnels are unique (\cite{J2006}, \cite{ST2006}).

The set of tunnel number $1$ knots has a structure of a tree, and its bridge number is recursively determined.

\begin{theorem}[\cite{CM2009}]
Any tunnel number $1$ knot is obtained from the trivial knot by a unique sequence of cabling constructions.
\end{theorem}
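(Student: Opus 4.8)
The plan is to translate the statement into the language of genus-two Heegaard splittings of $S^3$ and then to extract the claimed tree structure from the combinatorics of the disk complex of a genus-two handlebody. First I would observe that a tunnel number one knot $K$ equipped with an unknotting tunnel $\tau$ produces a genus-two handlebody $H=N(K\cup\tau)$ whose complement $\overline{S^3-H}$ is again a genus-two handlebody; thus the pair gives a genus-two Heegaard splitting $S^3=V\cup_\Sigma W$. Invoking Waldhausen's theorem that Heegaard splittings of $S^3$ of a fixed genus are isotopic, I would fix one standard splitting once and for all and encode the tunnel inside it: the cocore of the tunnel handle becomes a properly embedded disk $D\subset V$, and the complementary data recovers $K$ as the union of the two arcs cut from the core by $D$. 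In this way the equivalence classes of knot tunnels are placed in bijection with suitable orbits of disks, and the whole problem becomes one about a single fixed combinatorial object.

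Next I would set up the simplicial complex that organizes these disks. Working in the disk complex of $V$ (vertices the isotopy classes of essential properly embedded disks, simplices the disjoint families), I would isolate the primitive disks --- those meeting some disk of $W$ in a single point --- since these are exactly the disks that encode tunnels. The Goeritz group $\mathcal{G}$, the group of isotopy classes of orientation-preserving homeomorphisms of $S^3$ preserving $\Sigma$, acts on this complex; its explicit presentation in genus two is the structural input that makes the argument effective, and a tunnel corresponds to a $\mathcal{G}$-orbit of such disks. The cabling construction is then realized as a single local move on disks: starting from a tunnel disk one performs a band modification producing a tunnel for the knot obtained by the corresponding cabling of $K$, so that each cabling becomes passage to an adjacent vertex in a graph whose vertices are tunnels.

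With the framework in place, I would introduce a complexity on tunnels --- a depth function measuring, for instance, the minimal intersection of the tunnel disk with a fixed reference disk system, or equivalently the number of cabling moves required. The heart of the proof is to show two things: every nontrivial tunnel admits a \emph{unique} cabling reduction that strictly lowers the complexity, and the process terminates precisely at the trivial tunnel of the unknot. To compare two disks I would put them in minimal position by the innermost-disk and outermost-arc surgeries used in the proof of Theorem \ref{intersection}, together with Theorem \ref{handlebody} to control essential surfaces inside the handlebodies; this produces a normal form for the tunnel disk relative to the reference system and pins down the reduction. Uniqueness of the descending move and strict monotonicity of the complexity then force the graph of tunnels to be a tree with a single source, namely the trivial tunnel, so that each vertex is joined to the root by a unique monotone path --- the asserted unique sequence of cabling constructions.

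The main obstacle is exactly this uniqueness-plus-acyclicity step. Showing that the uncabling is forced, that there is genuinely no choice in how to reduce a tunnel and that distinct sequences cannot terminate at the same tunnel, requires a delicate analysis of how primitive disks in the genus-two handlebody can intersect, governed by the explicit genus-two Goeritz group. The disk-surgery arguments deliver minimal position readily enough, but upgrading minimal position to the rigidity needed for a tree --- rather than merely a connected graph --- is the technical core and is where the full strength of the analysis of the primitive disk complex is indispensable.
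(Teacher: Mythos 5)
This survey states the theorem without proof, giving only the citation to Cho--McCullough, so your attempt can only be measured against that source. Your framework is in fact theirs: Waldhausen's uniqueness of the genus-two Heegaard splitting of $S^3$ places every tunnel inside one standard unknotted handlebody $H$, tunnels become orbits of non-separating disks of $H$ under the genus-two Goeritz group, and cabling becomes a local move on disks. On that level the outline is faithful to the cited proof.

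There are, however, two genuine problems. First, your identification of the relevant disks is wrong: primitive disks (those whose boundary meets the boundary of some meridian disk of the outer handlebody $W$ in a single point) are not ``exactly the disks that encode tunnels.'' Every essential non-separating disk of $H$ encodes a tunnel of some tunnel number one knot; the primitive ones are precisely the tunnels of the \emph{trivial} knot, and their role is to serve as the root of the tree --- the Goeritz group acts transitively on primitive disks, primitive pairs, and primitive triples, which is what makes the trivial knot the unique starting point of every cabling sequence. Under your reading the complex would contain no nontrivial tunnels at all. Second, the heart of the theorem --- existence and uniqueness of the cabling sequence --- is exactly the step your sketch defers, and the mechanism you propose for it (a complexity with a unique strictly descending move, to be verified by innermost-disk and outermost-arc surgery as in Theorem \ref{intersection} together with Theorem \ref{handlebody}) is not adequate: minimal-position surgery gives no way to rule out two distinct descending moves from one tunnel, or two different sequences terminating at the same tunnel. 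What Cho--McCullough actually establish is that the non-separating disk complex of $H$ deformation retracts to a bipartite tree, that the genus-two Goeritz group (determined explicitly by Scharlemann, Akbas, and Cho) acts on this tree with the primitive orbits collapsing to a single vertex, and that the quotient is again a tree whose edges are precisely the cabling moves; uniqueness of the cabling sequence is then just uniqueness of paths to the root in that quotient tree, not a monotone-descent argument. Without the tree structure of the disk complex and the determination of the Goeritz group, the rigidity you need cannot be extracted from minimal position alone.
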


\subsection{Tangle decomposing surfaces ($K\pitchfork F$)}

\subsubsection{Tangle decomposing spheres}

For alternating knots (\cite{M1984}), positive knots (\cite{O2002}), Montesinos knots (\cite{O1984}), essential $1$-string tangle decomposing spheres, namely decomposing spheres, are classified.
And essential $2$-string tangle decomposing spheres, namely essential Conway spheres, are classified for alternating knots (\cite{M1984}), Montesinos knots (\cite{O1984}), $3$-bridge knots (\cite{MO2009}).
An unknotting number $1$ knot has no decomposing sphere (\cite{S1985}).
For unknotting number $1$ knots, the relation with essential Conway spheres is classified (\cite{GL2006}).

\begin{theorem}[\cite{GR1995}, cf. \cite{M1997}, \cite{Teragaito}, \cite{OM1998}]\label{tunnel_tangle}
Tunnel number $1$ knots and free genus $1$ knots have no essential $n$-string tangle decomposing sphere for any $n$.
\end{theorem}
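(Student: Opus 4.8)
The plan is to exploit that, under either hypothesis, the knot exterior is assembled from a genus-$2$ handlebody in a controlled way, and then to play an essential $n$-string tangle decomposing sphere off against that handlebody, where Theorem~\ref{handlebody} leaves no room for anything but a disk. Note first that a tunnel number one knot and a free genus one knot are both nontrivial, so $E(K)$ is irreducible and boundary irreducible. Suppose, for contradiction, that $S$ is an essential $n$-string tangle decomposing sphere and set $P=S\cap E(K)$; this is an essential (incompressible, boundary incompressible, not boundary parallel) planar surface whose $2n$ boundary curves are meridians of $K$. Since $n\ge 1$, $P$ has at least two boundary components, and in particular is not a disk.

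For the tunnel number one case I would fix an unknotting tunnel $\tau$ and use that $V:=S^3-\mathrm{int}N(K\cup\tau)$ is a genus-$2$ handlebody, so that $E(K)=V\cup h$, where $h$ is the $2$-handle dual to $\tau$. Letting $\gamma$ be the cocore arc of $h$, one has $E(K)-\mathrm{int}N(\gamma)=V$, and $\partial E(K)$, minus two small disks, is absorbed into $\partial V=\Sigma$ (a genus-$2$ surface). The strategy is to isotope $P$ so as to minimize $|P\cap\gamma|$ (equivalently $|P\cap D_h|$, where $D_h$ is a meridian disk of $h$ with $\partial D_h\subset\Sigma$) and then to argue that this minimum is $0$. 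Once $P\cap\gamma=\emptyset$, the surface $P$ lies in the handlebody $V$ with $\partial P\subset\Sigma$, and Theorem~\ref{handlebody} forces $P$ to be a disk, contradicting that $P$ has $2n\ge 2$ boundary components.

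The free genus one case I would run in parallel. Take a free genus one Seifert surface $F$, so that $C:=S^3-\mathrm{int}N(F)$ is a genus-$2$ handlebody and cutting $E(K)$ along the incompressible, boundary incompressible surface $F$ yields $C$. By Theorem~\ref{intersection} I may assume that every component of $P\cap F$ is essential in both $P$ and $F$; cutting $P$ along these curves and arcs then exhibits $P\cap C$ as an essential surface in the handlebody $C$, which by Theorem~\ref{handlebody} must consist of disks. Reassembling $P$ from disks glued along curves that are essential in $F$, together with the planarity of $P$ and the meridional boundary condition, is then to be driven to a contradiction.

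The main obstacle, in both cases, is precisely the reduction step: pushing $|P\cap\gamma|$ (respectively the complexity of $P\cap F$) down to the trivial configuration. Innermost-loop and outermost-arc arguments, fed by the incompressibility and boundary incompressibility of $P$ and by the irreducibility of $E(K)$ exactly as in the proof of Theorem~\ref{intersection}, dispose of the inessential intersections; but the intersection curves and arcs that are essential on $P$ cannot be removed so cheaply. Moreover two escapes must be closed by hand: $P$ could a priori be boundary parallel inside the handlebody along the reattaching region of the handle, and $P$ could admit a boundary compression in the handlebody that runs over the handle and so is invisible in $E(K)$. Ruling these out is where the genuine content lies, and it is here that I expect to need the full labelled-graph (parity and counting) analysis of the intersection pattern, in the style of the Gordon--Reid argument, rather than the soft topological moves alone.
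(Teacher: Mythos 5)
This survey does not actually prove Theorem~\ref{tunnel_tangle}: it records the statement with citations, attributing the tunnel number one case to Gordon--Reid \cite{GR1995} and the free genus one case to Ozawa--Matsuda \cite{OM1998} (cf.\ \cite{M1997}). So the only meaningful comparison is with those proofs, and your setup does mirror their framework: both play the essential meridional planar surface $P=S\cap E(K)$ against a genus-$2$ handlebody ($S^3-\mathrm{int}\,N(K\cup\tau)$, respectively $S^3-\mathrm{int}\,N(F)$), with Theorem~\ref{handlebody} (essential surfaces in handlebodies are disks) as the target contradiction.

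However, what you have written is an outline, not a proof, and the gap is exactly where you yourself locate it --- which does not make it less of a gap. The reduction ``isotope $P$ until $P\cap\gamma=\emptyset$'' cannot be achieved by the soft moves available in this paper: innermost-disk and outermost-arc exchanges, as in the proof of Theorem~\ref{intersection}, only remove intersections that are inessential on $P$ or on the cocore disk, and nothing forces the essential ones to vanish; there is no complexity that visibly decreases. Moreover, even if $P$ were pushed into $V$, Theorem~\ref{handlebody} does not apply until $P$ is known to be essential \emph{in $V$}: compressions, boundary compressions and parallelism regions that run over the attaching annulus of the $2$-handle are invisible in $E(K)$, so essentiality of $P$ in $E(K)$ does not transfer --- you name both escapes but do not close either. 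The same issue recurs in the free genus one case: that the pieces of $P$ cut along $P\cap F$ are essential in the handlebody $C$ is asserted, not proved (a compressing disk for a piece has boundary essential in that piece but possibly inessential in $P$), and ``reassembling the disks \ldots is then to be driven to a contradiction'' is a placeholder for the entire argument. Closing these holes is precisely the content of \cite{GR1995} and \cite{OM1998}: one takes a minimal, generally nonempty, intersection with the meridian disk (resp.\ with $F$), forms the associated labelled graphs, and extracts the contradiction by parity and counting. Since none of that combinatorial core is carried out, the statement is not proved; what you have is a correct identification of the framework together with an accurate description of the theorem you would still need to prove.
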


Theorem \ref{tunnel_tangle} does not hold for tangle decomposing spheres of genus greater than or equal to $1$ (\cite{EM2000}).
It is conjectured from Conjecture \ref{rank_genus} that $2$-generator knots are tunnel number $1$ knots.
Therefore, it is expected that Theorem \ref{tunnel_tangle} also holds for $2$-generator knots, but in fact, those knots have no decomposing sphere (\cite{N1982}), and have no essential Conway sphere (\cite{BW2005}).

\begin{theorem}[\cite{O1998k}]
Knots with essential free $2$-string tangle decompositions have no essential $n$-string tangle decomposition for any $n\ne2$.
Moreover, essential $2$-string tangle decompositions are unique.
\end{theorem}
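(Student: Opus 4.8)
The plan is to fix the given essential free $2$-string tangle decomposition and compare any other essential tangle decomposition against it, using the essential transversality theorem (Theorem~\ref{intersection}) together with the fact that an essential surface in a handlebody is a disk (Theorem~\ref{handlebody}). Write $S$ for the decomposing sphere of the given free decomposition, $P=S\cap E(K)$ for the corresponding essential $4$-punctured sphere, and $H_+,H_-$ for the two complementary pieces $B_\pm\cap E(K)$; by the freeness hypothesis each $H_\pm$ is a (genus $2$) handlebody. Since $K$ carries an essential tangle decomposition it is nontrivial, so $E(K)$ is irreducible and boundary irreducible, and $P$ is incompressible and boundary incompressible.

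Now let $S'$ be the sphere of any other essential $n$-string tangle decomposition and set $Q=S'\cap E(K)$, an essential $2n$-punctured sphere. First I would apply Theorem~\ref{intersection} to isotope $P$ and $Q$ so that every component of $P\cap Q$ is essential in both, and in addition take $|P\cap Q|$ minimal among all such positions. The first real step is to eliminate arc components: an outermost arc $\alpha$ of $P\cap Q$ in $Q$ cuts off a subdisk $\Delta\subset Q$ whose interior misses $P$ (any loop of $P\cap Q$ lying in $\Delta$ would bound a disk in $Q$, contradicting essentiality, so $\Delta\cap P=\alpha$). Then $\Delta$ is a boundary-compressing disk for $P$ along the essential arc $\alpha$, contradicting boundary incompressibility of $P$. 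Hence $P\cap Q$ consists only of loops, each essential in the $4$-punctured sphere $P$ and in $Q$.

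The heart of the argument is to rule out these loops. Cutting $Q$ along $P\cap Q$ expresses $Q$ as a union of planar pieces, each properly embedded in one of the handlebodies $H_\pm$; a standard innermost-disk argument shows that, because $Q$ is incompressible and the loops are essential in $Q$, each such piece is incompressible in $H_\pm$. By Theorem~\ref{handlebody} an incompressible piece that is not a disk must be inessential in the handlebody, hence boundary parallel or boundary compressible there; since the loops are essential in $Q$ no piece is a disk, so this alternative always occurs. Analyzing these possibilities, a piece boundary parallel into $P$, or boundary compressible along an arc of $P$, yields an isotopy of $Q$ lowering $|P\cap Q|$ and contradicting minimality, while boundary parallelism or boundary compression into the meridional annuli of $\partial N(K)$ would make $Q$ boundary parallel or boundary compressible in $E(K)$, contradicting essentiality of $Q$. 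Pushing this to its conclusion forces $P\cap Q=\emptyset$. Then $Q$ lies in a single handlebody $H_\pm$ and is essential in $E(K)$; it cannot be a disk (it has $2n\ge 2$ boundary meridians) and cannot compress or boundary compress into $\partial N(K)$, so by Theorem~\ref{handlebody} it must be boundary parallel into $P$. A $2n$-punctured sphere parallel to a subsurface of the $4$-punctured sphere $P$ forces $2n=4$ and identifies $Q$ with $P$ up to isotopy. This simultaneously gives $n=2$ and the uniqueness of the essential $2$-string decomposition.

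I expect the main obstacle to be exactly the loop-elimination step in the third paragraph: organizing the innermost-loop and boundary-parallelism bookkeeping inside the genus $2$ handlebodies so that every configuration either reduces $|P\cap Q|$ or contradicts essentiality of $Q$. This is where the freeness hypothesis is indispensable, since it is what lets me invoke Theorem~\ref{handlebody} for the pieces of $Q$; without the handlebody structure the pieces could be complicated essential surfaces and the rigidity forcing $Q$ onto $P$ would fail.
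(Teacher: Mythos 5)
Your overall strategy---minimal essential intersection of the $2n$-punctured sphere $Q$ with the $4$-punctured sphere $P$, followed by applying Theorem~\ref{handlebody} to the pieces of $Q$ inside the two handlebodies $H_\pm$---is the natural one for this theorem (the survey itself gives no proof, only the citation to \cite{O1998k}, but this is the expected skeleton, and freeness enters exactly where you say it does). However, two of your steps are genuinely broken as written. First, the arc-elimination paragraph is internally inconsistent: after applying Theorem~\ref{intersection}, every arc of $P\cap Q$ is \emph{essential} in $Q$, and an essential arc by definition does not cut off a disk $\Delta\subset Q$, so your outermost-arc contradiction never gets started. The honest reason arcs do not occur is different and easier: both $\partial P$ and $\partial Q$ consist of meridians on the torus $\partial E(K)$, hence can be isotoped to be disjoint; equivalently, view $S$ and $S'$ as transverse spheres in $S^3$, so that their intersection is a closed $1$-manifold avoiding $K$.

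Second---and this is the substantive gap---the loop-elimination step and the final disjointness step both rest on a false dichotomy. The boundary of $H_\pm$ is $P\cup A_1\cup A_2$, where $A_i$ are the annuli $\partial N(K)\cap B_\pm$; Theorem~\ref{handlebody} only tells you that a non-disk incompressible piece of $Q$ is boundary compressible or boundary parallel \emph{in $H_\pm$}, where the compressing arc $\beta$, or the subsurface of parallelism, is allowed to run over $P$ and the annuli $A_i$ simultaneously. Such mixed configurations are not exceptional but typical (for instance, an annulus piece parallel to the union of a once-punctured disk of $P$ and a subannulus of $A_i$), and they are neither ``an isotopy lowering $|P\cap Q|$'' for free nor ``a boundary compression of $Q$ in $E(K)$'', so neither of your two contradictions applies to them. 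Further unaddressed points: the compressing disks and parallelism regions may meet \emph{other} pieces of $Q$, so the claimed isotopies require innermost/outermost choices you have not made; a boundary compression whose arc has both endpoints on the same intersection loop \emph{increases} $|P\cap Q|$ rather than decreasing it; and in your last paragraph, ``cannot boundary compress into $\partial N(K)$'' does not exhaust what Theorem~\ref{handlebody} allows, since the compressing arc may cross $P$, a case excluded neither by essentiality of $Q$ in $E(K)$ nor by minimality. You yourself flag the loop-elimination bookkeeping as the main obstacle; that obstacle is precisely where the content of the theorem lies, so as it stands the proposal is a plausible outline rather than a proof.
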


\begin{theorem}[{\cite[Theorem 2.0.3]{CGLS1987}}]
Knots with essential tangle decomposing surfaces are large.
\end{theorem}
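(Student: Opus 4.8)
The plan is to manufacture an essential closed surface by \emph{doubling} the meridional surface that $F$ cuts out of the exterior. Write $P = F \cap E(K)$. Since $K \pitchfork F$, the surface $F$ meets the tube $\partial N(K)$ in one meridian for each intersection point, so every component of $\partial P = F \cap \partial E(K)$ is a meridian of $K$ and $P$ is \emph{meridional}; by hypothesis (``essential tangle decomposing surface'') $P$ is essential, i.e.\ incompressible, boundary incompressible and not boundary parallel. I may assume $K$ is nontrivial, since the solid torus admits no essential meridional surface by Theorem \ref{handlebody}; thus $M = E(K)$ is irreducible and boundary irreducible. I then take a product neighbourhood $N(P) \cong P \times [-1,1]$ meeting $\partial E(K)$ in the annuli $\partial P \times [-1,1]$, push these meridional \emph{folding annuli} $A_i$ slightly into the interior, and set $\hat P = P_- \cup (\bigcup_i A_i) \cup P_+$ with $P_{\pm} = P \times \{\pm 1\}$. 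This $\hat P$ is the double $DP$ of $P$ along $\partial P$, a closed orientable surface embedded in $\mathrm{int}\,E(K)$, and the goal is to show it is essential.

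First I would check incompressibility, which I expect to be the main obstacle. Suppose $D \subset E(K)$ is a compressing disk for $\hat P$ and isotope it to minimise $|D \cap A|$, where $A = \bigcup_i A_i$. Innermost circles of $D \cap A$ bound disks in $D$; as each $A_i$ has an essential meridian as its core it is incompressible, so these circles can be removed. For the arcs I take an outermost one $\alpha$, cutting from $D$ a subdisk $D_0$ whose interior misses $A$, with $\partial D_0 = \alpha \cup \beta$, $\alpha \subset A_i$ and $\beta$ an arc of $\partial D$ lying in a single copy, say $P_+$, with both endpoints on $\partial_+ A_i \subset \partial P_+$. Then $\alpha$ is boundary parallel in the annulus $A_i$ and cuts off a disk $E \subset A_i$; capping $D_0$ with $E$ produces an embedded disk meeting $\hat P$ only in the loop $\beta \cup \gamma \subset \overline{P_+}$. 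If this loop is inessential in $P_+$ I can isotope $D$ across it to reduce $|D \cap A|$, contradicting minimality; if it is essential I obtain a genuine compressing disk for $P \cong P_+$, contradicting incompressibility of $P$. Hence $D \cap A = \emptyset$, so $\partial D$ lies in one copy $P_{\pm}$ and is essential there, again compressing $P$. The point where essentiality of $P$ is used decisively is exactly this reduction of a compression of the doubled surface, running across the folding annuli, to an honest compression of $P$; here it is convenient to recall the earlier lemma that an incompressible, non--boundary--parallel surface in a knot exterior is automatically boundary incompressible, so no separate boundary--compression case survives.

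Finally I would rule out boundary parallelism. A Euler characteristic count gives $\chi(\hat P) = 2\chi(P)$, so if $P$ has genus $g$ and $2n$ meridional boundary components then $\hat P$ is closed of genus $2g + 2n - 1$. When $g \ge 1$ or $n \ge 2$ this is at least $2$, and $\hat P$ cannot be parallel to the torus $\partial E(K)$. The sole remaining case is $g=0,\ n=1$, where $P$ is an essential meridional annulus; by Theorem \ref{classify_annuli} a meridional (rather than cabling) essential annulus forces $K$ to be composite with $F$ a decomposing sphere, and $\hat P$ is then the swallow--follow torus, which is incompressible and not boundary parallel. Uniformly, were $\hat P$ parallel to $\partial E(K)$ the product region between them would exhibit $P$ as boundary parallel, contradicting essentiality of $P$; and since $K$ is nontrivial $P$ is not a disk, so $\hat P$ is not a sphere and, by irreducibility of $M$, does not bound a ball. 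Therefore $\hat P$ is an incompressible, non--boundary--parallel closed surface in $E(K)$, i.e.\ an essential closed surface, and $K$ is large.
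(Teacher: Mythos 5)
The survey states this result purely as a citation of \cite[Theorem 2.0.3]{CGLS1987} and offers no proof, so your argument has to stand entirely on its own; unfortunately it cannot, because the surface you construct is \emph{never} incompressible. Your $\hat P$ bounds, on the side containing $P$, the product region $W \cong P\times[-1,1]$, which is disjoint from $\partial E(K)$ once the folding annuli are pushed inside. Since $P$ is a compact orientable surface with non-empty boundary and is not a disk, $W$ is a handlebody of genus $1-\chi(P)\ge 1$, and a handlebody boundary always compresses inside the handlebody. Concretely, for any essential properly embedded arc $\alpha\subset P$ the vertical disk $D=\alpha\times[-1,1]\subset W$ is a compressing disk for $\hat P$, and its boundary, the double of $\alpha$, is essential in $\hat P$ (if it bounded a disk in $\hat P$, an outermost arc of that disk's intersection with the fold curves would exhibit $\alpha$ as boundary-parallel in $P$). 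This is exactly the family of disks your innermost/outermost analysis misses: for such a $D$, the intersection $D\cap A_i$ consists of \emph{spanning} arcs of the annuli $A_i$, joining the two boundary circles of $A_i$. A spanning arc is essential in $A_i$ and does not cut off a disk $E\subset A_i$, so your step ``$\alpha$ is boundary parallel in the annulus $A_i$ and cuts off a disk $E\subset A_i$'' fails, and no repair is possible since the compression genuinely exists. Relatedly, your treatment of the case $g=0$, $n=1$ is wrong: the double of the meridional annulus bounds $(\text{annulus})\times I$, a solid torus, hence compresses; the swallow--follow torus is a different surface, obtained by tubing $F$ along an arc of $K$ (an annulus in $\partial N(K)$ that follows the knot), not by doubling $P$ toward $\partial E(K)$.

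This also indicates why the theorem is not elementary. The natural closed surface to try is the one obtained by tubing $P$ along arcs of the knot, and even that surface may be compressible; the content of the cited theorem of Culler--Gordon--Luecke--Shalen is precisely that, given an essential surface with boundary slope $\gamma$, either the exterior contains a closed essential surface or the capped-off surface is essential in the $\gamma$-Dehn filling. Applied to the meridional slope, the filling is $S^3$, which contains no essential closed surface (Theorem \ref{3-sphere}), so $E(K)$ must contain a closed essential surface and $K$ is large. Any correct proof along your lines would in effect have to redo that tubing-and-compressing analysis, which is substantially more delicate than an innermost-disk argument on the folding annuli.
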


\begin{theorem}[\cite{T1997}]\label{thin_tangle}
If a knot in thin position is not in bridge position, then it has an essential tangle decomposing sphere.
\end{theorem}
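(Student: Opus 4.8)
The plan is to exhibit a thin level sphere and to prove that, in thin position, the meridionally punctured sphere it carves out of the exterior is essential.

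\emph{Setup.} Since $K$ is in thin position but not in bridge position, by definition there is at least one thin level sphere; fix a thin level sphere $P=S_{r_i}$ (if necessary, chosen to minimize $|P\cap K|$ among all thin spheres), and set $Q=P\cap E(K)$, a planar surface properly embedded in $E(K)$ whose boundary is a collection of meridians of $K$. Unwinding the inequality $|S_{r_{i-1}}\cap K|>|S_{r_i}\cap K|<|S_{r_{i+1}}\cap K|$ shows that the critical point of $h|_K$ immediately below $P$ is a maximum and the one immediately above $P$ is a minimum. I must check the three defining properties of an essential surface for $Q$: that it is not boundary parallel, that it is boundary incompressible, and that it is incompressible.

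\emph{Not boundary parallel.} If $Q$ were boundary parallel it would cobound a product region with a subsurface of $\partial E(K)$. But a product region on the lower side forces the strands of $K$ there to run monotonically, which is incompatible with the maximum sitting just below $P$; symmetrically a product region on the upper side is incompatible with the minimum just above $P$. In either case $P$ would not be a genuine thin sphere, a contradiction.

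\emph{Reducing the other two properties to a width-reducing move.} For both remaining properties the strategy is the same: assume a compressing disk (respectively a boundary-compressing disk) $D$ for $Q$ exists and manufacture from it a Type~I or Type~II move, contradicting that the width $\sum_i|S_{r_i}\cap K|$ is minimal. First put $D$ in general position with respect to $K$ and to the family of level spheres (Theorem~\ref{general position}). Using the irreducibility of $S^3$ to cap off innermost circles of $D\cap(\text{level spheres})$, and the handlebody obstruction (Theorem~\ref{handlebody}) to prevent a level sphere from being swallowed by the resulting ball, together with outermost-arc surgeries along the thick and thin levels, I would push $D$ to one side of $P$ and arrange that its interior crosses no thick or thin sphere. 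A boundary-compressing disk is thereby converted into a \emph{strongly} upper or lower disk, while a compressing disk yields, via the maximum just below and the minimum just above $P$, a matching pair of strongly upper and lower disks. Such a pair is either coincident, giving a Type~I move, or disjoint, giving a Type~II move; either isotopy strictly lowers the width, contradicting thin position. Hence $Q$ is both boundary incompressible and incompressible.

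\emph{Conclusion and the main obstacle.} Being incompressible, boundary incompressible, and not boundary parallel, $Q$ is essential, so $P$ is an essential tangle decomposing sphere for $K$. The hard part is the reduction just sketched: a priori the disk $D$ can cross $K$ and weave through many level spheres, and the real content is organizing the innermost-disk and outermost-arc simplifications so that the local picture at $P$ --- a maximum immediately below and a minimum immediately above --- is genuinely converted into a disjoint or coincident pair of strongly upper and lower disks, that is, into an honest width-reducing move, rather than merely relocating the complexity of $D$. Guaranteeing that this extraction always succeeds is the crux of the argument.
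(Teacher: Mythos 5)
Your proposal does not prove the theorem; it reduces everything to precisely the assertion that constitutes the theorem's entire mathematical content, and the reduction itself is sketched incorrectly. Note first that the paper gives no proof of Theorem~\ref{thin_tangle} --- it is quoted from Thompson \cite{T1997} --- and Thompson's argument does not show that a thin level sphere is essential: she maximally compresses the thin level surfaces and shows that an essential meridional planar component (in general no longer a level surface) must survive, using thin position to rule out the possibility that everything compresses away. What you attempt is the stronger statement that a thin level sphere itself is essential. That stronger statement is \emph{false} for an arbitrary thin sphere: by \cite{BZ2014}, cited in the paper immediately after the theorem, there are knots in thin position with compressible thin level spheres. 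So your parenthetical ``if necessary, chosen to minimize $|P\cap K|$'' is not an optional refinement but an indispensable hypothesis, and with it the statement you are trying to prove is exactly Wu's theorem \cite{W2004} that the thinnest thin level sphere is essential --- a later strengthening of \cite{T1997}, with a proof of its own.

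The concrete gap is the step you yourself flag as the crux. A compressing disk $D$ for $Q=P\cap E(K)$ is disjoint from $K$ and lies on one side of $P$, with boundary a loop in $P$ separating punctures from punctures; an upper or lower disk, by contrast, has boundary consisting of a subarc of $K$ together with an arc in the level sphere. No innermost-circle or outermost-arc surgery turns the former into a ``matching pair of strongly upper and lower disks'': the maximum just below $P$ and the minimum just above $P$ lie on strands of $K$ about which $D$ carries no information. The genuine arguments (Thompson's compression argument, or Wu's analysis showing that a compressing disk for the thinnest thin sphere yields either a width-reducing isotopy or a thinner thin sphere) require a delicate study of how $D$ and the ball it cuts off interact with the thick and thin levels, none of which appears here. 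Two smaller points: boundary incompressibility needs no separate width argument, since once $Q$ is incompressible the lemma of \cite{W1967}, \cite{HM1995} quoted in the paper shows a boundary compression would force $Q$ to be a boundary-parallel annulus; and your non-boundary-parallel argument is misstated --- the product region lies in $E(K)$ and contains no strands of $K$ at all, so it cannot ``force strands to run monotonically.'' The correct consequence of boundary parallelism is that one side of $P$ meets $K$ in a single trivial arc, and contradicting thin position from that still requires a width argument rather than an appeal to the adjacent maximum and minimum alone.
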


In particular, most thin level sphere for a knot in a thin position is an essential tangle decomposing sphere (\cite{W2004}).
In general, if $ht(K)=n+1$, then there exist $n$ mutually disjoint non-parallel essential tangle decomposing spheres(\cite{B2003}, cf. \cite{HK1997}).
In thin position, a knot with an inessential thin level sphere is discovered (\cite{BZ2014}).

The stream of generalized Heegaard splittings for $3$-manifolds by Casson--Gordon (\cite{CG1987}), Scharlemann--Thompson (\cite{ST1994}) is extended to knots in $3$-manifolds by \cite{HS2001}, \cite{T2009}.

\begin{theorem}[\cite{HS2001}, \cite{T2009}]
If a knot has a bridge position of Hempel distance $1$, then it admits a Type I move, or the knot exterior contains a meridional essential surface.
\end{theorem}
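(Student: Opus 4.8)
The plan is to convert the distance hypothesis into a pair of disjoint compressing disks on opposite sides of the bridge sphere, and then to run a dichotomy. Write $S$ for the bridge decomposing sphere, let $B_+$ and $B_-$ be the two $3$-balls into which $S$ separates $S^3$, and set $\Sigma=S\cap E(K)$, the planar surface obtained from $S$ by deleting the meridional circles cut off by $K$. Since $d(K,S)=1$, one may choose an essential disk $D_+\subset B_+\setminus K$ and an essential disk $D_-\subset B_-\setminus K$ whose boundary curves $c_\pm=\partial D_\pm$ realize the distance; minimizing $|c_+\cap c_-|$ by an isotopy, the equality $d(K,S)=1$ forces $c_+$ and $c_-$ to be disjoint and non-isotopic essential curves on $\Sigma$. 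The crucial bookkeeping remark is that every boundary circle of $\Sigma$ is a meridian of $K$, and hence so is every boundary circle of any surface obtained from $\Sigma$ by compressing along disks in $E(K)$; thus any essential surface produced below is automatically meridional.

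First I would compress $\Sigma$ simultaneously along $D_+$ and $D_-$; this is legitimate precisely because $c_+\cap c_-=\emptyset$, and it yields a surface $F$ properly embedded in $E(K)$ with meridional boundary. If some component of $F$ is incompressible, boundary incompressible, and not boundary parallel, then by the remark above it is a meridional essential surface and we are finished. This is the favorable outcome of the construction, and it is the one that occurs when the two trivial tangles $(B_\pm,\,B_\pm\cap K)$ genuinely see incompatible splittings of $\Sigma$.

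The substantial case is when no component of $F$ is essential, and here the goal is to produce a Type I move, i.e.\ a strongly upper disk and a strongly lower disk sharing one point of $K$. My approach would be to trace the failure of essentiality back through $D_+$ and $D_-$ using the innermost-disk and outermost-arc technology of the proof of Theorem \ref{intersection}: irreducibility of $S^3$ (Theorem \ref{3-sphere}) disposes of any sphere component of $F$, while Theorem \ref{handlebody}, applied to the handlebodies obtained from the two trivial tangles $(B_\pm,\,B_\pm\cap K)$, rules out nontrivial essential behavior inside each ball and converts a boundary parallelism or a residual compression of $F$ into an upper disk for a maximum of $K$ together with a lower disk for an adjacent minimum sharing an endpoint on $K$. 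Since in bridge position $S$ is the unique thick sphere and there are no thin spheres, such disks are automatically strongly upper and strongly lower, so the cancelling pair is exactly a Type I move. The main obstacle is this last step: one must enumerate the inessential configurations of $F$ carefully enough that at least one of them yields an honest cancelling pair rather than merely a reduction of the intersection data; to force the issue I would fix the distance-realizing classes and choose representing disks $D_\pm$ so that a natural complexity (for instance $|c_+\cap c_-|$ together with the number of intersection arcs of $D_\pm$ with the bridge arcs $B_\pm\cap K$) is lexicographically minimal, so that no such simplification is available and a genuine Type I move must emerge.
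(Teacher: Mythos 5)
Your opening moves are fine and match how the cited proofs begin: the paper itself contains no proof of this statement (it is quoted from \cite{HS2001} and \cite{T2009}), and the setup you describe---distance $1$ gives disjoint, non-isotopic essential curves $c_\pm$ bounding disks $D_\pm\subset B_\pm\setminus K$, compressing $\Sigma$ along both keeps all boundary meridional, and an essential component of the resulting surface $F$ finishes the theorem---is correct. The gap is the entire second branch, and the device you propose for closing it cannot do any work, because the complexity you minimize is identically zero: $|c_+\cap c_-|=0$ is the distance-$1$ hypothesis itself, and $D_\pm$ lie in $B_\pm\setminus K$ by the very definition of $\mathcal{C}(B_\pm)$, so ``the number of intersection arcs of $D_\pm$ with the bridge arcs'' is $0$ for every admissible choice. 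Lexicographically minimizing the constant $(0,0)$ selects nothing, so nothing forces a Type I move to emerge in the unfavorable case.

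The deeper problem is that your dichotomy is attached to the wrong object: whether your particular $F$ has an essential component depends on the choice of $D_\pm$, whereas the theorem's alternative concerns $E(K)$ itself. Take $K=K_1\# K_2$ in a minimal $(b_1+b_2-1)$-bridge position (say the sum of two trefoils). This position admits no Type I move, since a cancellation would produce a position with fewer bridges than the bridge number \cite{Sch1954}, \cite{Sch2003}; yet it has distance $1$, because the boundary of a neighborhood of an upper bridge disk in the $K_1$-part and of a lower bridge disk in the $K_2$-part are disjoint essential curves bounding disks on opposite sides. If you compress along that pair, a component of $F$ is isotopic to the decomposing sphere and your favorable branch succeeds; but if you instead take the pipes over the one top arc and under the one bottom arc that the decomposing sphere crosses, every component of $F$ cuts off a trivial ball--arc pair, so all components are inessential while no Type I move exists---your unfavorable branch would then be obliged to prove a false statement. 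This is exactly why \cite{HS2001} and \cite{T2009} do not compress along a single pair of disks: following Casson--Gordon \cite{CG1987} and Scharlemann--Thompson \cite{ST1994}, they compress along maximal disjoint collections of disks on both sides, chosen to extremize an Euler-characteristic-type complexity of the compressed surface, and the bulk of both papers is the proof that if any component of that \emph{extremal} surface is compressible or boundary parallel, then either extremality is violated or a cancelling pair of bridge disks can be extracted. Theorem \ref{handlebody} is used inside that machinery, but it does not by itself ``convert a boundary parallelism into an upper disk and a lower disk sharing a point of $K$''; that conversion is the actual content of the theorem, and it is absent from your outline.
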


\subsubsection{Bridge decomposing spheres}

A bridge position which admits no Type I move is said to be {\em locally minimal}.
For the trivial knot (\cite{Ot1982}), \cite{HS1998}, \cite{O2009}), $2$-bridge knots (\cite{Ot1985}, \cite{ST2008}), torus knots (\cite{OZ2011}, \cite{Z2015}), a locally minimal bridge position is globally minimal.
A knot with a locally minimal bridge position which is not globally minimal was firstly given in \cite{OT2013}.
Moreover, an example of knots with arbitrary gap between bridge numbers of locally minimal bridge positions is constructed (\cite{JKOT}).
On the other hand, for the trivial knot, there exists a locally minimal Morse position which is not thin position (\cite{Z2011}).
For hyperbolic knots, it is announced that the number of bridge decomposing spheres is finite for each bridge number (\cite{C2007}).
And the bridge number of Montesinos knots is determined (\cite{BZ1985}), arborescent knots of bridge number $3$ are characterized (\cite{J2011}).

\begin{theorem}[\cite{BS2005}, cf. \cite{J2014}] \label{distance}
Let $K$ be a knot in bridge position with respect to a bridge decomposing sphere $F$, and $S$ be an essential surface properly embedded in $E(K)$.
Then $d(K,F)\le 2g(S) +|\partial S|$ holds.
\end{theorem}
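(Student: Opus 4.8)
\section*{Proof proposal}

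The plan is to adapt Hartshorn's sweep-out argument (that the distance of a Heegaard splitting is bounded by twice the genus of an incompressible surface) to the bridge setting, following Bachman--Schleimer. The bridge position of $K$ with respect to $F$ presents $E(K)$ as a union of two handlebodies $H_\pm = B_\pm\cap E(K)$ glued along the $2b(K)$-punctured sphere $\Sigma_0=F\cap E(K)$. First I would upgrade the standard Morse function $h$ on $S^3$ to a sweep-out $\{\Sigma_t\}_{t\in[-1,1]}$ of $E(K)$ with $\Sigma_0=F\cap E(K)$ and $\Sigma_{\pm1}$ collapsing onto spines of $H_\pm$, all levels being identified with $\Sigma_0$ through the product structure so that loops on different levels may be compared as vertices of $\mathcal{C}(F)$.

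Next I would put $S$ in general position with respect to the sweep-out, so that $h|_S$ and $h|_{\partial S}$ are Morse and $S$ is transverse to each $\Sigma_t$ away from finitely many tangencies; among all isotopy representatives I would choose one minimizing the number of loops and arcs of $S\cap\Sigma_t$ at regular levels. For a regular value $t$ the intersection $S\cap\Sigma_t$ is a disjoint union of loops and arcs on the punctured sphere $\Sigma_t\cong\Sigma_0$. By minimality together with incompressibility and boundary-incompressibility of $S$ and irreducibility of $E(K)$, the inessential loops and arcs carry no further simplification, and I would retain the loops essential in $\Sigma_t$ as candidate vertices $v(t)\in\mathcal{C}(F)$, using boundary-incompressibility to dispose of the essential arcs at the cost of the $|\partial S|$ term.

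The two ends of the desired path come from the collapse onto the spines. For $t$ close to $+1$ the region below $\Sigma_t$ is a thin regular neighborhood of a spine of $H_+$, so each component of $S\cap H_+$ is a disk by Theorem \ref{handlebody}; an innermost such disk $D\subset S$ meets $\Sigma_t$ in a single essential loop that bounds a disk in $H_+$ and is therefore a meridian, i.e.\ a vertex of $\mathcal{C}(B_+)$. Symmetrically, $t$ near $-1$ yields a vertex of $\mathcal{C}(B_-)$. For the interior levels I would track how $v(t)$ changes as $t$ crosses critical values: across an index-$0$ or index-$2$ point a trivial loop is born or dies and the essential classes are unaffected, while across a (possibly boundary) saddle the loop-and-arc system changes by a single band move, so the essential classes before and after are disjoint or differ by one surgery and hence lie at distance at most $1$ in $\mathcal{C}(F)$. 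Concatenating these steps gives a path in $\mathcal{C}(F)$ from a vertex of $\mathcal{C}(B_+)$ to a vertex of $\mathcal{C}(B_-)$ whose length bounds $d(K,F)$, and the number of band transitions is governed by the handle data of $S$: interior saddles contribute the loop changes and boundary saddles the arc changes, totalling $-\chi(S)$, to which the two endpoint identifications with the disk sets add $2$, giving $d(K,F)\le -\chi(S)+2 = 2g(S)+|\partial S|$.

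The main obstacle is the analysis of levels at which no component of $S\cap\Sigma_t$ is essential in $\Sigma_t$, and, more generally, making the transition count tight. When every intersection loop at some regular level is inessential, one must argue --- using minimality of the intersection together with essentiality of $S$ (incompressibility rules out compressing loops, boundary-incompressibility rules out boundary-compressing arcs, and not-boundary-parallel together with Theorem \ref{handlebody} forbids $S$ from sliding entirely into one handlebody as anything but a disk) --- that such a configuration cannot persist between the two spine ends without producing a genuine compression or a reduction of the intersection, contradicting the choices above. Controlling precisely how saddles translate into distance-$1$ steps, so that the count lands at $-\chi(S)+2$ rather than at an uncontrolled number of saddles, is the delicate bookkeeping at the heart of the argument, and is exactly where the incompressibility and boundary-incompressibility of $S$ must be invoked to prune the inessential part of $S\cap\Sigma_t$ at each stage.
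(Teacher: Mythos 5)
This survey never proves the theorem: it is quoted from Bachman--Schleimer \cite{BS2005} (cf.\ \cite{J2014}) without proof, so your proposal can only be measured against the argument in \cite{BS2005} itself. At the level of strategy you have reproduced that argument: a Hartshorn-style sweep-out of $E(K)$ with middle level $F\cap E(K)$, essential intersection curves of $S$ with the levels read off as a path in $\mathcal{C}(F)$, and endpoints of the path landing in the disk sets $\mathcal{C}(B_{\pm})$. So the approach is the right one.

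However, the two issues you defer as ``obstacles'' are not loose ends --- they are essentially the whole content of the Bachman--Schleimer proof --- and what you do assert about them is incorrect. First, the endpoint argument fails as stated: for $t$ near $\pm 1$ you cannot invoke Theorem \ref{handlebody} to conclude that the components of $S\cap H_{\pm}$ are disks. Essentiality of $S$ in $E(K)$ does not make $S\cap H_{\pm}$ \emph{essential} in $H_{\pm}$: a boundary-compression of $S\cap H_{+}$ through the level surface, or a parallelism into it, is invisible to incompressibility and boundary-incompressibility in $E(K)$, and Theorem \ref{handlebody} requires essentiality, not mere incompressibility. (Indeed $S$ may a priori be disjoint from the spine neighborhood altogether, which is exactly the ``no essential intersections'' configuration you postpone.) Second, the counting is wrong: the number of saddles of $h|_S$ is not $-\chi(S)$; it is $-\chi(S)$ plus twice the number of cancelling center tangencies, which is unbounded, so ``one distance-one step per saddle'' gives no bound at all. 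In \cite{BS2005}, as in Hartshorn's closed case, the bound does not come from counting critical points of $h|_S$: each move in $\mathcal{C}(F)$ is charged to a compression or boundary-compression of $S$ along a disk lying in a level surface, each such surgery strictly decreases the quantity $2g+|\partial|$ of the surviving essential piece of $S$, and since this quantity starts at $2g(S)+|\partial S|$ and is non-negative, the path has length at most $2g(S)+|\partial S|$. Without this charging scheme, and without the analysis of levels all of whose intersection curves are inessential (which is precisely where essentiality of $S$ and irreducibility of $E(K)$ must be used), your text is a plan for the proof rather than a proof.
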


If the Hempel distance is large with respected to a globally minimal bridge decomposing sphere, then the minimal decomposition is unique (\cite{T2007}).
And if the Hempel distance is large, then it does not admit exceptional Dehn surgeries (\cite{BCJTT201209}).

By Theorem \ref{tunnel_tangle} and \ref{thin_tangle}, if a tunnel number $1$ knot is in thin position, then it is in globally minimal bridge position.

\begin{theorem}[Morimoto conjecture \cite{GST2000}]\label{leveling_tunnel}
If a tunnel number $1$ knot is in globally minimal bridge position, then the unknotting tunnel can be moved by slides and isotopies so that it is contained in the bridge decomposing sphere.
\end{theorem}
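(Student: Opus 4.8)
The plan is to replace the knot by the one-complex $\Gamma = K \cup \tau$ and to run a thin-position argument for $\Gamma$, exploiting the single property that distinguishes an unknotting tunnel: the complement $S^3 - \mathrm{int}\, N(\Gamma)$ is a genus-two handlebody. Keeping $K$ in its given globally minimal bridge position --- which, by the observation preceding the theorem, I may also take to be a thin position --- I would fix the bridge sphere $P = h^{-1}(r)$ separating the maxima of $K$ from its minima, and put $\tau$ in Morse position, transverse to every level sphere away from its finitely many critical points. Sliding the endpoints of $\tau$ along $K$ and isotoping its interior, I would minimize, in lexicographic order, the number of critical points of $\tau$ and then $|\tau \cap P|$. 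These are precisely the slides along $K$ and ambient isotopies permitted in the statement, so they preserve the bridge position of $K$.

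The core of the argument is to show that the minimized $\tau$ has no critical point off $P$. Suppose $\tau$ had a maximum above $P$, and take an innermost one together with a level sphere just below it. The part of $\Gamma$ lying above this sphere is a union of unknotted arcs, and a compressing disk for the sphere in $S^3 - \mathrm{int}\, N(\Gamma)$ would, after the innermost-disk surgery used in the proof of Theorem \ref{intersection}, either cancel the maximum against a lower minimum --- a move reducing the width of $\Gamma$ and contradicting minimality --- or persist as a properly embedded surface in one of the two complementary handlebodies. In the latter case Theorem \ref{handlebody} forces that surface to be a disk, which supplies a further slide of $\tau$ carrying the offending maximum down to $P$. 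Iterating this over all maxima, and symmetrically over all minima, removes every critical point of $\tau$ from the interior of the two balls bounded by $P$, so that $\tau$ meets no critical level of $\Gamma$ other than $P$ itself.

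At this stage $\tau$ is a union of arcs monotone with respect to $h$; after a last endpoint slide its endpoints lie on $K \cap P$, and each monotone arc, meeting no maximum or minimum, can be flattened into $P$ without crossing any critical point of $K$, giving $\tau \subset P$ as required. The main obstacle is the middle step, where one must exclude the configurations in which the tunnel genuinely interlocks with the bridges of $K$: a purely local cancellation does not suffice, since sliding an endpoint of $\tau$ past a maximum or minimum of $K$ threatens to raise the bridge number. The delicate point is therefore to combine the handlebody structure of the complement (no essential non-disk surface, Theorem \ref{handlebody}) with the \emph{global} minimality of the bridge position, arranging each leveling slide of $\tau$ within a region disjoint from the critical points of $K$, and to keep this bookkeeping consistent across all critical points of $\tau$ at once rather than one at a time --- this simultaneous control is where the real work lies.
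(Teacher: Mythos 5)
This survey states the theorem without proof, citing Goda--Scharlemann--Thompson \cite{GST2000}, so your proposal must be judged against that original argument; measured that way it has a genuine gap, and it is the one you concede in your own closing sentences. Your first paragraph is fine and matches the actual starting point of \cite{GST2000}: by Theorems \ref{tunnel_tangle} and \ref{thin_tangle}, thin position of a tunnel number one knot is a bridge position, and since a bridge position with $n$ maxima has width $2n^2$, a globally minimal bridge position is itself a thin position of the knot. The failure is in your second paragraph, which is where the entire content of the theorem lives. The asserted dichotomy---each critical point of $\tau$ either cancels against another (contradicting minimality) or yields, via Theorem \ref{handlebody}, a disk ``supplying a further slide of $\tau$ carrying the offending maximum down to $P$''---is not proved, and it is exactly what fails if attempted naively. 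The complement of $\Gamma=K\cup\tau$ is a single genus-two handlebody, and the relevant object is the planar surface $P\cap E(\Gamma)$ properly embedded in it; Theorem \ref{handlebody} only tells you that this surface admits a compression or boundary compression. Such a disk has boundary running over $N(K)$ and $N(\tau)$ in an a priori arbitrary pattern, and converting it into either a width-reducing move for the graph or a tunnel slide that does not disturb the bridge presentation of $K$ is precisely the hard step: slides move the endpoints of $\tau$ along $K$, upper/lower disk arguments for graphs can run into the vertices of $\Gamma$, and a ``cancellation'' may force an endpoint of $\tau$ over a maximum of $K$, destroying the minimal bridge position. Thin position for graphs is strictly harder than for knots for exactly these reasons.

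That unresolved step is not bookkeeping; in \cite{GST2000} it occupies most of a thirty-page paper. Goda--Scharlemann--Thompson do not simply minimize the number of critical points of $\tau$ and then $|\tau\cap P|$: among other things they must separately handle the degenerate configuration in which slides turn $\tau$ into a loop attached to $K$ at a single point (the ``eyeglass'' case), analyze when disjoint upper and lower disks for the graph exist and what moves they permit, and prove termination of a considerably more delicate complexity, using the handlebody structure of $E(\Gamma)$ globally rather than one critical point at a time. So your sketch correctly identifies the framework (thin position of the graph $K\cup\tau$, slides and isotopies, Theorem \ref{handlebody}), but the theorem is exactly the part you defer as ``where the real work lies,'' and nothing in the proposal closes it.
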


By Theorem \ref{leveling_tunnel} and \cite{M1992}, the classification theorem (\cite{K1999}) for unknotting tunnels of $2$-bridge knots follows.

\subsection{Seifert surfaces ($K=\partial F$)}

\subsubsection{Kakimizu complexes}

By giving a handle decomposition between two mutually interior-disjoint Seifert surfaces similarly to \ref{Morse--Novikov}, it turns out from Theorem \ref{disjoint_minimal_Seifert} together that any Seifert surfaces are stably equivalent.

\begin{theorem}[{\cite{ST1988}}, \cite{K1992}]\label{disjoint_minimal_Seifert}
For any two (minimal genus) Seifert surfaces $F$ and $F'$ for a knot, there exists a sequence of (minimal genus) Seifert surfaces $F=F_0,F_1,\ldots,F_n=F'$ such that $intF_i\cap intF_{i-1}=\emptyset$ for each $i$ $(1\le i\le n)$.
\end{theorem}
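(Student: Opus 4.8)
The plan is to recognize the statement as the connectivity of the \emph{Kakimizu complex}, whose vertices are isotopy classes of (minimal genus) Seifert surfaces and whose edges join classes that admit representatives with disjoint interiors. I would first dispose of the trivial knot, whose only minimal genus Seifert surface is a disk, and reduce the arbitrary-Seifert-surface statement to the minimal genus one: a non-minimal-genus Seifert surface is compressible, and surgering it along a compressing disk produces a Seifert surface of strictly smaller genus whose interior can be pushed off the original, supplying an edge; iterating reaches a minimal genus surface, so it suffices to connect any two minimal genus surfaces $F$ and $F'$. Henceforth $K$ is nontrivial, so $E(K)$ is irreducible and boundary irreducible and minimal genus Seifert surfaces are essential (incompressible and boundary incompressible); by Theorem~\ref{intersection} I may isotope $F$ and $F'$ into general position so that every component of $F\cap F'$ is essential in both surfaces.

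The heart of the argument passes to the infinite cyclic cover $p:\tilde X\to E(K)$ associated with the abelianization $H_1(E(K))\cong\mathbb{Z}$, with generating deck transformation $t$. A minimal genus Seifert surface $R$ lifts to a single incompressible separating surface $\tilde R$, and its full preimage $\{t^j\tilde R\}_{j\in\mathbb{Z}}$ cuts $\tilde X$ into fundamental domains, each homeomorphic to $E(K)$ cut along $R$. Fixing a lift $\tilde F$ and putting the lifts of $F'$ into equivariantly minimal position, I would use Kakimizu's distance $d(F,F')$, measured by how many fundamental domains of the $\tilde F$-decomposition a single lift $\tilde{F'}$ meets; compactness of $\tilde{F'}$ makes $d(F,F')$ finite, it vanishes exactly when $F$ and $F'$ are isotopic, and $d(F,F')\le 1$ exactly when they admit representatives with disjoint interiors, i.e. are equal or adjacent in the complex. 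Connectivity then reduces to an induction on $d$.

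The inductive step is to produce, whenever $d(F,F')=d\ge 2$, an intermediate minimal genus Seifert surface $R$ with $\mathrm{int}\,R\cap\mathrm{int}\,F=\emptyset$ (hence adjacent to $F$) and $d(R,F')=d-1$; by induction $R$ connects to $F'$, and prepending the edge $F$--$R$ completes the path. I would obtain $R$ by resolving $\tilde F$ against the outermost translate of $\tilde{F'}$ that it meets, performing the oriented cut-and-paste equivariantly along the entire orbit so that the result is $t$-invariant and descends to a Seifert surface in $E(K)$. By construction a copy of the resolved surface can be isotoped off $\tilde F$, yielding the edge, while the resolution removes the extremal translate from the range of intersection, lowering $d$ by one.

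The step I expect to be the main obstacle is verifying that this cut-and-paste again yields a \emph{connected, minimal genus} Seifert surface rather than a disconnected one or one of larger genus. Here the essentiality of every component of $F\cap F'$ furnished by Theorem~\ref{intersection} is exactly what controls the Euler characteristic under resolution, and the minimality of the genus -- equivalently, that $\tilde F$ and $\tilde{F'}$ are norm-minimizing leaves in $\tilde X$ -- is what forbids compressible pieces: an innermost/outermost analysis of the resolved surface, discarding any sphere or boundary-parallel disk bounded inside a fundamental domain (impossible nontrivially by irreducibility and Theorem~\ref{handlebody}), shows the genus cannot rise. Arranging the resolution to be genuinely equivariant, so that the new surface descends to a \emph{single} connected Seifert surface with the correct longitudinal boundary, is the other delicate point; once both are settled the induction closes and the complex is connected.
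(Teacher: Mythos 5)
The paper itself gives no proof of this statement: it quotes it from Scharlemann--Thompson \cite{ST1988} and Kakimizu \cite{K1992}. Your outline is a reconstruction of Kakimizu's covering-space proof (Scharlemann--Thompson instead work downstairs with double curve sums and a Thurston-norm/sutured-manifold argument, inducting on $|F\cap F'|$), so I will judge it against that route, and there are two genuine gaps. The first is your reduction step: the claim that a non-minimal-genus Seifert surface is compressible is false. There exist knots with incompressible Seifert surfaces of arbitrarily large genus --- see the references cited in this very survey (\cite{RG1981}, \cite{HL1971}, \cite{K1991}, \cite{W2008}). Consequently, iterated compression does not connect an arbitrary Seifert surface to a minimal genus one, and the general version of the theorem does not reduce to the minimal genus version in the way you propose. (Compression does handle compressible surfaces, since the compressed surface can be pushed off the original; but it terminates at an incompressible surface of possibly non-minimal genus, and joining those requires running the covering-space machinery for incompressible spanning surfaces as well, which is what Kakimizu actually does.)

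The second gap is the crux of the induction, which you dispose of with ``by construction a copy of the resolved surface can be isotoped off $\tilde F$,'' while flagging connectedness and genus as the main obstacles; in fact it is the other way around. An oriented cut-and-paste produces, along each intersection curve, two ``staircase'' sheets, and a resolved surface can always be pushed off the \emph{resolved} union but can be pushed off the \emph{original} surfaces only if, along every resolution curve it contains, it is the staircase bounding the region it is pushed into; otherwise the push-off must enter the middle region, which still meets the original surfaces. If this disjointness were automatic, the same argument applied to the full resolution of the two preimages would show that any two minimal genus Seifert surfaces are within distance $2$ of each other in the Kakimizu complex, which is false (e.g.\ for surfaces obtained from one another by repeated spinning around a swallow-follow torus of a composite knot --- this is why diameter bounds such as the one of \cite{SS2009} quoted in this paper are nontrivial). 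So you must take the coherently one-sided resolution of $\tilde F$ with the extremal translate $t^k\tilde F'$, namely the frontier of the intersection of the two regions lying below them (the ``lower envelope''), and then actually verify: (i) that this surface, pushed into that region, misses the \emph{entire} orbits $\{t^j\tilde F\}$ and $\{t^j\tilde F'\}$ as required (disjointness from the single lift $\tilde F$ is not enough to descend to an edge downstairs), which uses the ordering of the translates and the extremality of $t^k\tilde F'$; and (ii) equivariance of the whole construction. By contrast, your two worries are comparatively routine: the boundary of the envelope is a single longitude, closed components are null-homologous (as $H_2(E(K))=0$) and are discarded, and minimal genus follows from a $\chi_-$-count against the Thurston norm, with the essential position from Theorem~\ref{intersection} needed only to rule out sphere and disk components of the resolution.
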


By Theorem \ref{disjoint_minimal_Seifert}, it follows that Kakimizu complexes (\cite{K1992}) are connected.
Moreover, Kakimizu complexes are simply connected (\cite{S2010}) and contractible (\cite{PS2012}).
For a hyperbolic knot of genus $1$, the number of mutually disjoint non-parallel genus $1$ Seifert surfaces is at most $7$ (\cite{T2003}).
On the other hand, for any natural number $n$, there exists a hyperbolic knot of genus $1$ which bounds mutually disjoint non-parallel Seifert surface of genus $1$ and $n$ Seifert surfaces of genus $2$ (\cite{T2003}).
And for a hyperbolic knot $K$, the diameter of the Kakimizu complex is at most $2g(K)(3g(K)-2)+1$ (\cite{SS2009}).

\subsubsection{Seifert surfaces and closed surfaces}

\begin{theorem}[\cite{O2000}]\label{non-free}
A knot bounds a non-free incompressible Seifert surface if and only if there exists an essential closed surface of order $0$ in the knot exterior.
\end{theorem}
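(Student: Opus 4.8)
My plan is to convert the order of a closed surface into a lifting criterion for the infinite cyclic cover and then read off both implications from the structure of that cover. Assume $K$ is nontrivial, so that $E(K)$ is irreducible and boundary irreducible. Since $H_1(E(K))\cong\mathbb{Z}$ is generated by a meridian and the natural map $H_1(E(K))\to\mathbb{Z}$ is an isomorphism detecting linking number with $K$, a closed surface $S\subset E(K)$ satisfies $o(S)=0$ exactly when $i_*\colon H_1(S)\to H_1(E(K))$ is trivial, which holds exactly when $S$ lifts to the infinite cyclic cover $p\colon\widetilde{E}\to E(K)$. Fix an incompressible and boundary incompressible minimal genus Seifert surface $F$ and set $M=E(K)-\operatorname{int}N(F)$; cutting along $F$ and stacking copies realizes $\widetilde{E}=\bigcup_{i\in\mathbb{Z}}M_i$, where each $M_i\cong M$, consecutive pieces are glued along lifts $\widetilde{F}_i$ of $F$, and $t$ denotes a generating deck transformation. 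Two facts organize the argument: $F$ is free if and only if $M$ is a handlebody, and any closed surface that can be isotoped into a single piece $M_i$ automatically has order $0$, since $M_i$ embeds in $\widetilde{E}$ and $p_*\pi_1(\widetilde{E})$ lies in the kernel of $\pi_1(E(K))\to\mathbb{Z}$.

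For the implication from an order-$0$ essential closed surface $S$ to a non-free Seifert surface, I would first use $o(S)=0$ to arrange a Seifert surface disjoint from $S$. Concretely, the dual map $E(K)\to S^1$ representing the generator of $H^1(E(K))$ restricts trivially to $S$, so it may be homotoped to be constant on $S$ and made transverse to a regular value, whose preimage is a Seifert surface $F$ with $F\cap S=\emptyset$; equivalently, the compact lift $\widetilde{S}$ can be pushed into a single $M_i$ by an outermost-region argument against the incompressible $\widetilde{F}_i$, where the extreme piece met by $\widetilde{S}$ has its far frontier disjoint from $\widetilde{S}$ and Theorem \ref{intersection} forbids the remaining essential intersection curves. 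In either formulation $S$ then lies in $M$ as an essential closed surface. Since a handlebody contains no essential closed surface (by Theorem \ref{handlebody} every essential surface in a handlebody is a disk, and a closed surface is never a disk), $M$ cannot be a handlebody, so $F$ is non-free and incompressible.

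For the converse, suppose $F$ is a non-free incompressible Seifert surface, so $M$ is not a handlebody. If $M$ already contains an essential closed surface $S_0$, then $S_0\subset M_0\subset\widetilde{E}$ has order $0$ and is essential in $E(K)$, and we are finished. The substantive case is that $M$ is not a handlebody yet carries no essential closed surface; a maximal boundary-compression (compression-body) argument then yields an essential, non-boundary-parallel surface $G\subset M$ that is not a disk, necessarily with boundary on the frontier copies of $F$. The plan is to assemble the translates of $G$ across the cover into one closed surface: the boundary curves of $G$ lie on the $\widetilde{F}_i$, and using the product neighborhoods $N(\widetilde{F}_i)\cong\widetilde{F}_i\times[0,1]$ one glues $G$ to suitable translates $t^{k}G$ along matching curves to build an embedded closed surface $\widetilde{S}\subset\widetilde{E}$. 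Its projection $S=p(\widetilde{S})$ then has $o(S)=0$ automatically, and incompressibility and non-boundary-parallelism of $S$ are inherited from those of $G$ and the $\widetilde{F}_i$ through innermost-disk and essential-transversality arguments (Theorem \ref{intersection}).

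The main obstacle is precisely this assembly in the forward direction. Matching the boundary slopes of $G$ on the two frontier copies of $F$ so that the translates glue into an \emph{embedded} closed surface, and then verifying that the result is genuinely incompressible and not boundary parallel rather than collapsing under a compression that sweeps across several pieces $M_i$, is the delicate point; controlling it is exactly what forces the use of the cover, since the naive candidate $\partial N(F)$ always compresses into the handlebody $N(F)$ and so is never essential in $E(K)$. The remaining steps—the lifting criterion, the disjointness of $S$ from a Seifert surface, and the handlebody obstruction—are comparatively routine given Theorems \ref{intersection} and \ref{handlebody}.
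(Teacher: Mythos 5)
The survey states this theorem without proof, quoting \cite{O2000}, so the comparison is with the argument of that paper. The direction ``non-free Seifert surface $\Rightarrow$ order-$0$ essential closed surface'' is where your proposal has a genuine gap, and you flag it yourself: the assembly of the translates $t^{k}G$ in the infinite cyclic cover is never carried out, and it faces two independent obstructions you do not resolve. First, there is no reason the boundary curves of $G$ on the two frontier copies of $F$ should correspond under the deck transformation, so ``gluing along matching curves'' is simply not available; second, even granting an embedded closed $\widetilde{S}\subset\widetilde{E}$, its projection $p(\widetilde{S})$ is embedded in $E(K)$ only if $\widetilde{S}$ is disjoint from every translate $t^{k}\widetilde{S}$, $k\neq 0$, which is nowhere addressed. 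Since this implication is the entire content of the theorem in that direction, the proposal is incomplete precisely at its core. The argument of \cite{O2000} avoids the cover altogether: with $M=E(K)-\operatorname{int}N(F)$ not a handlebody, take a maximal compression body $C\subset M$ with $\partial_{+}C=\partial M$; then $\partial_{-}C\neq\emptyset$ (otherwise $C=M$ would be a handlebody), its components are closed non-sphere surfaces ($M$ is irreducible) incompressible in $M$, they are disjoint from $F$ and hence have order $0$ (a loop missing a Seifert surface has linking number $0$ with $K$), they are incompressible in all of $E(K)$ by the standard innermost-circle exchange through $F$ using incompressibility of $F$ and irreducibility of $E(K)$, and they are non-peripheral because a boundary-parallel torus has order $1$. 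In particular your dichotomy (``$M$ contains an essential closed surface or it does not'') dissolves: the compression-body step always hands you the closed surface directly, and no bounded surface $G$, no cover, and no gluing are needed.

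The converse direction of your proposal is essentially right but contains an unjustified claim: the Seifert surface obtained from the map $E(K)\to S^{1}$ made constant on $S$ need not be incompressible (and the preimage of a regular value may have closed components), yet you conclude ``$F$ is non-free and incompressible.'' The repair is routine: discard closed components, then compress $F$ maximally, choosing each compressing disk disjoint from $S$ by an innermost argument (incompressibility of $S$ plus irreducibility of $E(K)$); this preserves $F\cap S=\emptyset$, and then $S\subset M$ rules out $M$ being a handlebody via Theorem \ref{handlebody}, as you say. By contrast, your alternative formulation---pushing the lift $\widetilde{S}$ into a single $M_{i}$ against a \emph{fixed} incompressible $F$---does not follow from Theorem \ref{intersection}: that theorem says the intersection \emph{can} be made essential on both sides, it does not ``forbid the remaining essential intersection curves,'' and an order-$0$ closed surface need not be isotopic off a given Seifert surface. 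Constructing a \emph{new} Seifert surface disjoint from $S$, as in your first formulation, is the correct move.
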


Hence, by Theorem \ref{meridional_free}, \ref{positive_closed}, incompressible Seifert surfaces for meridional knots and positive knots are free.
The next theorem can be obtained by applying Haken's normal surface theory (\cite{H1961}) to Seifert surfaces for knots.

\begin{theorem}[\cite{W2008}]
For a knot $K$, there exist a finite number of incompressible Seifert surfaces $\{ S_1,\ldots,S_n\}$ and a finite number of essential closed surfaces $\{ Q_1,\ldots,Q_m\}$ such that any incompressible Seifert surface $S$ is represented by a Haken sum $S=S_i+a_1Q_1+\cdots+a_mQ_m$ $(a_1,\ldots,a_m\ge 0)$.
\end{theorem}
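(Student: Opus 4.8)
The plan is to follow the route indicated just before the statement, namely to apply \emph{Haken's normal surface theory} (\cite{H1961}, \cite{JO1984}) to a fixed triangulation of the knot exterior. First I would fix a triangulation $T$ of $E(K)$, with $t$ tetrahedra, and recall that since $E(K)$ is irreducible (and boundary irreducible when $K$ is nontrivial, the trivial case being vacuous) any incompressible surface with essential boundary can be isotoped into normal form; an incompressible Seifert surface $S$, whose boundary is a single longitude and hence an essential curve on $\partial E(K)$, is of this kind, so it may be assumed normal (handling the fixed boundary pattern is routine because the longitudinal slope is pinned down). In normal coordinates $S$ is recorded by a vector $x(S)\in\mathbb{Z}_{\ge 0}^{7t}$ lying in the cone $C$ cut out by the \emph{matching equations}, and coordinatewise addition of compatible normal surfaces realizes the geometric \emph{Haken sum}, under which the Euler characteristic is additive.

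Next I would introduce the boundary grading. Sending a normal surface to the class with multiplicity of its boundary on $\partial E(K)$ defines a monoid homomorphism $\beta$ from the monoid $\mathcal{M}$ of integral points of $C$ to $H_1(\partial E(K);\mathbb{Z})$. A Seifert surface is exactly a normal surface with $\beta=\ell$, a single longitude, while the closed normal surfaces form the submonoid $\mathcal{M}_0=\beta^{-1}(0)$. The finiteness is now essentially algebraic: by Gordan's lemma the rational cone $C$ is finitely generated, so $\mathcal{M}$ has a finite Hilbert basis, and the standard structure theory of affine monoids shows that the fiber $\beta^{-1}(\ell)$ is a finitely generated module over $\mathcal{M}_0$. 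Concretely, there are finitely many normal Seifert surfaces $S_1,\dots,S_n$ with $\beta(S_i)=\ell$ such that every normal Seifert surface is some $S_i$ plus an element of $\mathcal{M}_0$, and $\mathcal{M}_0$ is in turn generated by finitely many fundamental closed normal surfaces $Q_1,\dots,Q_m$ (a normal-surface incarnation of Kneser--Haken finiteness). This yields the identity $x(S)=x(S_i)+a_1 x(Q_1)+\cdots+a_m x(Q_m)$ with all $a_j\ge 0$, i.e.\ the desired Haken sum at the level of coordinates.

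The main obstacle I anticipate is \emph{upgrading the algebraic generators to genuinely essential pieces}: the theorem asserts the $S_i$ are incompressible Seifert surfaces and the $Q_j$ essential closed surfaces, whereas a priori they are only solutions of the matching equations. Here I would exploit the additivity of Euler characteristic under Haken sum together with the irreducibility and boundary irreducibility of $E(K)$ to discard or absorb trivial summands: a sphere summand bounds a ball and can be removed, a disk or boundary-parallel summand would contradict the incompressibility of the given $S$, and a compressible closed summand can be compressed and simplified without increasing complexity. Running the usual Jaco--Oertel exchange and innermost/outermost arguments (\cite{JO1984}, \cite{H1961}) one arranges the finitely many base surfaces $S_i$ to be incompressible and the $Q_j$ to be essential while the decomposition persists. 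A final point to verify is compatibility: since the $S_i$ and $Q_j$ arise from the normal-sum decomposition of $S$ itself, they carry no conflicting quadrilateral types in any tetrahedron, so the coordinate identity is realized by an honest geometric Haken sum, which completes the argument.
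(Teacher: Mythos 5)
The paper itself contains no proof of this statement: it is a survey result quoted from Wilson \cite{W2008}, and the only indication given is the sentence preceding it, that the theorem ``can be obtained by applying Haken's normal surface theory to Seifert surfaces for knots.'' Your route --- triangulate $E(K)$, normalize incompressible Seifert surfaces, use the finitely generated structure of the normal solution space, grade by the boundary, and deduce module-finiteness of the Seifert fiber over the submonoid of closed solutions --- is exactly that route, and the algebraic half of your sketch is essentially sound, with two repairable caveats. First, the set of admissible normal solutions is not a monoid (sums must respect the quadrilateral conditions), so Gordan's lemma and the module argument must be run separately on each of the finitely many admissible faces of the solution cone, rather than fixing generators first and checking compatibility afterwards as you propose. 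Second, your homomorphism $\beta$ to $H_1(\partial E(K);\mathbb{Z})$ is not well defined on unoriented normal surfaces; one should grade instead by the normal coordinates of the boundary curve system. Note also that normalizing a surface with boundary requires boundary-incompressibility, which for an incompressible Seifert surface follows from the lemma quoted earlier in the paper (an incompressible, boundary-compressible surface in a knot exterior is a boundary-parallel annulus, which a one-boundary-component surface cannot be).

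The genuine gap is in your third paragraph, the upgrading step, which is precisely where the real work lies. A Haken sum is a rigid coordinate identity: if some closed summand $Q_j$ in $S=S_i+a_1Q_1+\cdots+a_mQ_m$ is compressible, you cannot ``compress and simplify it without increasing complexity'' and still have a decomposition of $S$ --- compression is not a normal-surface operation, it destroys the coordinate identity, and there is no reason the modified summands sum back to $S$; likewise discarding a boundary-parallel summand changes the sum. The correct mechanism, which your sketch gestures at by citing \cite{JO1984} but does not actually supply, is to represent each incompressible Seifert surface by a \emph{least-weight} normal surface in its isotopy class and then invoke the Jaco--Oertel-type theorem that every summand in a Haken-sum decomposition of a least-weight incompressible and boundary-incompressible normal surface is itself incompressible, followed by a separate argument eliminating boundary-parallel closed summands from the generating set. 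Without the least-weight hypothesis the key claim is simply false --- summands of incompressible normal surfaces need not be incompressible --- so as written your finite sets $\{S_i\}$ and $\{Q_j\}$ are only normal surfaces, and the theorem's requirement that they be incompressible Seifert surfaces and essential closed surfaces is not established.
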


Hence, the number of incompressible Seifert surfaces for a small knot is finite.
Moreover, if we restrict Seifert surfaces to minimal genus, then by Theorem \ref{finiteness} or \cite{SS1964}, \cite{W2008} it follows the finiteness for hyperbolic knots.

\subsubsection{Murasugi sums}

\begin{definition}[\cite{M1963}, cf. \cite{S1978}]
Let $F$ be a surface whose boundary is a link $K$.
Suppose that there exists a sphere $S$ separating $S^3$ into two $3$-balls $B_1,\ B_2$ such that $F\cap S$ is a single disk.
Put $F_i=F\cap B_i$ for $i=1,\ 2$.
Then, $F$ is said to be {\em Murasugi decomposed} into $F_1$ and $F_2$, denoted by $F=F_1*F_2$.
Conversely, we say that $F$ is obtained from $F_1$ and $F_2$ by a {\em Murasugi sum} along a disk $F\cap S$.
In particular, if $F\cap S$ is a quadrangle, then the Murasugi sum is called a {\em plumbing}, the Murasugi decomposition is called a {\em deplumbing}.
\end{definition}

\begin{theorem}[\cite{G1983}, \cite{G1985}, \cite{S1978}]\label{Gabai_Murasugi}
Let $F=F_1*F_2$ be a Seifert surface.
If $F_1$ and $F_2$ is incompressible (minimal genus, fibered), then $F$ is also  incompressible (minimal genus, fibered).
The converse holds for minimal genus, fibered.
\end{theorem}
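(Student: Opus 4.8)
The plan is to pass from Seifert surfaces to their \emph{complementary sutured manifolds} and to recognize the Murasugi sum as a gluing operation on sutured manifolds, so that all three assertions become statements about tautness and the product structure being preserved under that gluing. Given a Seifert surface $F$ with boundary a link $K$, let $(M_F,\gamma_F)$ denote the sutured manifold obtained by cutting $E(K)$ along $F$, with $R_\pm(\gamma_F)$ the two copies of $F$. The governing dictionary is: $F$ is incompressible and of minimal genus precisely when $(M_F,\gamma_F)$ is \emph{taut}, meaning its ambient manifold is irreducible and $R(\gamma_F)$ is Thurston-norm minimizing in its relative homology class, and $F$ is a fiber surface precisely when $(M_F,\gamma_F)$ is a \emph{product} $F\times[0,1]$.

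For incompressibility I would first give a direct innermost-disk argument: assuming $F_1,F_2$ incompressible, take a compressing disk $\Delta$ for $F$ and put it in general position with the summing sphere $S$ (Theorem \ref{general position}). The intersection $\Delta\cap S$ is a union of loops; using irreducibility of $S^3$ (Theorem \ref{3-sphere}) one removes these loops by innermost-disk surgeries, eventually confining $\partial\Delta$, which may be isotoped off the summing disk $D=F\cap S$, to a single ball $B_i$. The resulting disk then compresses $F_i$, contradicting its incompressibility.

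The main step is the geometric observation that the complementary sutured manifold of $F=F_1*F_2$ is obtained from $(M_{F_1},\gamma_{F_1})$ and $(M_{F_2},\gamma_{F_2})$ by gluing along the two copies of the summing disk $D$ produced on either side of $S$; this is exactly the sutured-manifold gluing underlying Gabai's theorem (\cite{G1983}, \cite{G1985}). From here the forward implications for minimal genus and fibering follow from preservation results: the gluing of two taut sutured manifolds is taut, and the gluing of two product sutured manifolds is a product. Concretely, for minimal genus this reduces to additivity of the Thurston norm across $S$, namely $x([F])=x([F_1])+x([F_2])$, matched against the Euler-characteristic computation $\chi(F)=\chi(F_1)+\chi(F_2)-1$; for fibering it reduces to the fact that gluing $F_1\times[0,1]$ and $F_2\times[0,1]$ along a product region $D\times[0,1]$ again yields a product.

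The hard part is genuinely the tautness-preservation step, which is the substance of Gabai's theorem and cannot be reduced to innermost-disk bookkeeping. Establishing it requires the full machinery of sutured manifold decompositions: one builds a sutured manifold hierarchy for the glued manifold compatible with $S$, or equivalently produces a taut foliation transverse to the sutures, and controls how a putative norm-minimizing competitor surface meets $S$. For the converse direction (minimal genus and fibered), I would decompose along $S$ and argue that a taut, respectively product, sutured manifold restricts to taut, respectively product, sutured manifolds on each side: a norm-minimizing surface in $M_F$ is cut along $S$ into norm-minimizing pieces by a subadditivity-plus-superadditivity argument, forcing $F_1,F_2$ minimal genus, while a product structure on $M_F$ visibly restricts to product structures on the two pieces, forcing $F_1,F_2$ fibered. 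The essential obstacle throughout is the norm additivity across the summing sphere, for which the sutured-manifold and taut-foliation theory of \cite{G1983} is the decisive tool.
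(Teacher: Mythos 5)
The paper offers no proof to compare against: Theorem \ref{Gabai_Murasugi} is a survey item quoted directly from \cite{G1983}, \cite{G1985}, \cite{S1978}, so your proposal can only be measured against those sources --- and measured that way, it is a faithful reconstruction of the standard argument, with the difficulty apportioned in the right places. Three points of precision are worth recording. First, your elementary innermost-disk argument for incompressibility is sound once one notes why it gets off the ground: $\partial\Delta\subset F$ can always be isotoped off the summing disk $D=F\cap S$ (a disk in a surface), so $\Delta\cap S$ consists of loops lying in $\mathrm{int}(S\setminus D)$; surgery along innermost subdisks of $S\setminus D$ keeps the disk in the complement of $F$ because $\mathrm{int}(S\setminus D)\cap F=\emptyset$; and a disk in $F_i\subset F$ bounded by $\partial\Delta$ would make $\partial\Delta$ inessential in $F$, so the final disk genuinely compresses $F_i$. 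You also rightly refrain from claiming the converse for incompressibility, which fails and is deliberately excluded from the statement. Second, your structural lemma is slightly misstated: the sutured manifolds $(M_{F_1},\gamma_{F_1})$ and $(M_{F_2},\gamma_{F_2})$ are not glued along ``two copies of the summing disk $D$'' but along the single complementary disk $D'=\overline{S\setminus D}$; equivalently, $S$ meets $(M_F,\gamma_F)$ in a \emph{product disk}, and decomposing along it yields $(M_{F_1},\gamma_{F_1})\sqcup(M_{F_2},\gamma_{F_2})$. Third, ``the gluing of two taut sutured manifolds is taut'' is false as a general principle; the correct statement, which is exactly Gabai's theorem, is that tautness is preserved in both directions under this particular product-disk decomposition --- in \cite{G1983} the forward direction is obtained by building a taut foliation with $F$ as a leaf out of foliations for $F_1$ and $F_2$ and invoking norm-minimization of leaves, while \cite{G1985} supplies the converses. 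Relatedly, ``a product structure on $M_F$ visibly restricts'' oversells the fibered converse: one needs the standard but non-trivial lemma that a product disk in a product sutured manifold can be isotoped to be vertical, i.e.\ of the form $\alpha\times[0,1]$. You flag the tautness-preservation step as the genuine content and defer it to Gabai's machinery, which is the honest assessment; with these corrections your outline is an accurate account of the proofs in the cited references.
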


The incompressibility in Theorem \ref{Gabai_Murasugi} is extended to non-orientable surfaces (\cite{O2011}).

Harer gave a method to construct all fibered links, and conjectured next (\cite{H1982}).

\begin{theorem}[Harer conjecture \cite{GG2006}]
Any fibered link is obtained from the trivial knot by a sequence of plumbing and deplumbing of Hopf bands.
\end{theorem}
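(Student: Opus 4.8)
The plan is to leave the combinatorial setting of fibered links and pass to the geometric theory of open book decompositions and contact structures, where the Giroux correspondence supplies the decisive leverage. A fibered link $K \subset S^3$ with fiber surface $F$ is precisely the binding of an open book decomposition of $S^3$ whose page is $F$, and conversely every open book of $S^3$ arises this way. Under this dictionary, plumbing a positive (respectively negative) Hopf band onto $F$ corresponds exactly to a \emph{positive} (respectively \emph{negative}) \emph{stabilization} of the open book, while deplumbing corresponds to destabilization. Thus the statement to be proved becomes the following: every open book of $S^3$ is related to the trivial open book (whose page is a disk, binding the unknot) by a sequence of positive and negative stabilizations and destabilizations.

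First I would invoke the Giroux correspondence, which assigns to each open book a supported contact structure and asserts that two open books support isotopic contact structures if and only if they admit a common \emph{positive} stabilization. This immediately settles the case in which the given fibered link supports the unique tight contact structure of $S^3$, since the disk open book supports that structure as well. The difficulty is that distinct open books of $S^3$ may support many different, typically overtwisted, contact structures, and positive stabilization alone can never cross between inequivalent contact structures.

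The heart of the argument is therefore a \emph{signed} stabilization theorem: any two open books of a closed oriented $3$-manifold $M$ become isotopic after a sequence of positive \emph{and} negative stabilizations if and only if their supported oriented plane fields are \emph{homologous}, i.e.\ agree over the $2$-skeleton. The point of admitting negative stabilizations is that, unlike positive ones, they genuinely change the supported contact structure, and in particular they can adjust the three-dimensional (Hopf, or $d_3$) homotopy obstruction of the supported plane field; consequently one need only match the two-dimensional homology class, not the full homotopy class. Establishing this signed theorem is the main obstacle: it rests on convex surface theory and on a delicate analysis, via the page-monodromy and branched-surface picture, of exactly how the homotopy class of the supported plane field transforms under each type of stabilization, together with the verification that negative stabilizations suffice to realize any prescribed change in the three-dimensional obstruction.

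Finally I would specialize to $M = S^3$. Since $H^2(S^3;\mathbb{Z}) = 0$, any two oriented plane fields on $S^3$ are automatically homologous, so the hypothesis of the signed stabilization theorem is satisfied by every pair of open books of $S^3$. Hence every fibered link's open book is stably equivalent to the disk open book of the unknot. Unwinding the dictionary, this says precisely that every fibered link is obtained from the unknot by a sequence of plumbings and deplumbings of positive and negative Hopf bands, which is the assertion.
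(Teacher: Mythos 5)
Your proposal is correct and takes essentially the same route as the proof the paper relies on: the survey offers no argument of its own but attributes the theorem to Giroux--Goodman \cite{GG2006}, whose proof is precisely your scheme --- translate Hopf (de)plumbing into (de)stabilization of open books, establish that two open books of a closed oriented $3$-manifold admit a common stabilization if and only if their supported plane fields are homologous (via the Giroux correspondence, the effect of negative stabilization on the three-dimensional $d_3$-obstruction, and the homotopy classification of overtwisted contact structures), and then specialize using $H^2(S^3;\mathbb{Z})=0$. Note only that your ``signed stabilization theorem'' is the entire content of \cite{GG2006}, so what you have written is a faithful outline of that proof rather than a self-contained argument.
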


For general links, the next sequence is given.

\begin{theorem}[\cite{T1989}]
Let $L$ be a non-splittable link.
Then there exists a sequence of triple $(L_0,S_0,\alpha_0)\to (L_1,S_1,\alpha_1)\to \cdots \to (L_{m-1},S_{m-1},\alpha_{m-1})\to L_m=L$ which satisfies the followings.
\begin{enumerate}
\item $L_0$ is the trivial knot.
\item $S_i$ is a Seifert surface of maximal Euler characteristic for $L_i$ ($i=0,\ldots,m-1$).
\item $\alpha_i$ is an arc properly embedded in $S_i$.
\item $S_{i+1}$ is obtained from $S_i$ by a twisting along $\alpha_i$ or a plumbing of a Hopf band along $\alpha_i$.
\end{enumerate}
\end{theorem}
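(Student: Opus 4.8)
The plan is to argue by induction, reading the desired sequence backwards. Fix a Seifert surface $S$ of maximal Euler characteristic for $L$; since $L$ is non-splittable I may take $S$ connected. Reading the sequence as $(L,S)=(L_m,S_m)\to\cdots\to(L_0,S_0)$, I would repeatedly replace $(L_i,S_i)$ by a strictly simpler pair $(L_{i-1},S_{i-1})$ using the inverse of one of the two allowed moves: either deplumbing a Hopf band, which raises $\chi$ by $1$, or untwisting along a properly embedded arc, which preserves $\chi$ but simplifies the embedding in $S^3$. The induction runs on the lexicographic complexity $\bigl(-\chi(S),\,c(S)\bigr)$, where $c(S)$ is a secondary measure of the total twisting and mutual linking of the bands in a band presentation. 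The base case is $-\chi(S)$ minimal: a connected surface with $\chi(S)=1$ is a disk, and its boundary is the trivial knot, which I take as $L_0$.

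For the inductive step I would first fix a handle decomposition of the connected surface $S$ built on a single $0$-handle, writing $S=D\cup h_1\cup\cdots\cup h_k$ as a disk with $k=1-\chi(S)$ attached bands. This is the surface-level analogue of the handle decompositions used in Section~\ref{Morse--Novikov}. The core arc of a chosen band $h_j$, completed by an arc in $D$, is the properly embedded arc $\alpha$ along which a twisting or plumbing move is performed. The objective is to select a band and a move so that the resulting surface is again of maximal Euler characteristic for its boundary link and that link is again non-split; here I would invoke Theorem~\ref{Gabai_Murasugi}, which guarantees that deplumbing a Hopf band keeps us within the class of minimal genus (maximal Euler characteristic) surfaces.

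The dichotomy driving the step is as follows. If some band $h_j$, together with the part of $D$ it meets, is embedded as an (untwisted or singly twisted) Hopf band meeting the rest of $S$ in a single subdisk, then that Hopf band can be deplumbed, strictly raising $\chi$ and invoking the induction hypothesis on the smaller surface. If no band sits in such standard position, I would perform a twisting move along a suitable core arc to remove excess framing or linking of a band: this leaves $\chi$ fixed but strictly decreases $c(S)$, and I repeat until a deplumbable Hopf band appears. Non-splittability of $L$ is used throughout to keep $S$ connected and to ensure the intermediate links $L_i$ remain non-split, so that each $S_i$ is genuinely a maximal Euler characteristic surface.

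The main obstacle I anticipate is the second alternative: showing that whenever no band is in deplumbable Hopf position, a single twisting move along a genuinely properly embedded arc strictly reduces $c(S)$ without increasing $-\chi$, and that only finitely many such moves are needed before a Hopf band becomes available. This is the step requiring the most delicate control of how the bands are knotted and linked in $S^3$, and it is precisely where the extra twisting move (beyond the Hopf plumbings of the fibered case in the Harer conjecture) is indispensable, since a maximal Euler characteristic surface of a non-fibered link need not be reachable by plumbings alone.
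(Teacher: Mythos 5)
Your inductive skeleton (read the sequence backwards, deplumb Hopf bands, use Theorem~\ref{Gabai_Murasugi} to keep the smaller surface norm-minimizing, terminate at a disk) is fine as far as it goes, but the argument collapses at exactly the step you flagged, and the gap is not a technicality. Your secondary complexity $c(S)$ is never defined, and no obvious definition makes the induction run: ``total twisting and mutual linking of the bands'' is not an invariant of the pair $(L_i,S_i)$ --- it depends on the chosen band presentation and on the isotopy --- and you supply no mechanism forcing it to decrease under a twisting move, nor any reason that finitely many twistings must produce a deplumbable Hopf band. More seriously, a twisting along a properly embedded arc \emph{changes the boundary link}, and the fact that $\chi(S)$ is unchanged says nothing about whether $S$ is still of \emph{maximal} Euler characteristic for the new link: the new link might bound a Seifert surface of strictly larger $\chi$ that has nothing to do with $S$. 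Certifying that maximality survives each twist is the hard content of the theorem, and it cannot be extracted from Theorem~\ref{Gabai_Murasugi}, which governs only Murasugi sums (the plumbing half of the move), not twistings.

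For comparison: this survey gives no proof, but Thompson's actual argument in \cite{T1989} is organized around the skein tree of a diagram of $L$, not around a surface complexity. Its engine is the Scharlemann--Thompson theorem (\cite{ST1989}, stated in the same subsection of this paper): in any skein triple the two smallest of $\chi(L_+),\ \chi(L_-),\ \chi(L_0)-1$ are equal, and when $\chi(L_+)=\chi(L_0)-1<\chi(L_-)$ there are maximal Euler characteristic Seifert surfaces for $L_+$ and $L_0$ related by a Hopf plumbing. That skein-level control --- proved with Gabai--Scharlemann sutured manifold theory (\cite{S1989}) --- is precisely the missing ingredient in your proposal: it is what guarantees that, descending the skein tree to the unknot, each crossing change (your twisting) or smoothing (your Hopf plumbing) can be chosen so that the intermediate surfaces remain norm-minimizing. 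Note also that even the fibered special case, the Harer conjecture, required the contact-geometric methods of \cite{GG2006}, so one should not expect an elementary band-calculus induction of the kind you sketch to close. To repair the proposal you would need to either prove a Scharlemann--Thompson-type skein inequality yourself or import \cite{ST1989} and restructure the induction along the skein tree.
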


\subsubsection{Seifert surfaces and crossing changes}

Suppose that three oriented links $L_+,\ L_-,\ L_0$ are in skein relation, and let $\chi(L)$ be the maximal Euler characteristic among all Seifert surfaces for $L$.

\begin{theorem}[\cite{ST1989}]
If we arrange $\chi(L_+),\ \chi(L_-),\ \chi(L_0)-1$ in ascending order, then we have $\chi_1=\chi_2\le\chi_3$.
In particular, if $\chi(L_+)=\chi(L_0)-1<\chi(L_-)$, then there exist maximal Euler characteristic Seifert surfaces $S'$ and $S$ for $L_+$ and $L_0$ such that $S'$ is obtained from $S$ by a plumbing of a Hopf band.
\end{theorem}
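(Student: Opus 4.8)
The plan is to write $a=\chi(L_+)$, $b=\chi(L_-)$ and $c=\chi(L_0)-1$, and to prove the three facts $c\le a$, $c\le b$ and $\min\{a,b\}=c$, which together say exactly that the ascending arrangement satisfies $\chi_1=\chi_2=c\le\chi_3$. The two inequalities I would obtain by a band construction. Starting from a Seifert surface $R_0$ for $L_0$ of maximal Euler characteristic, I note that near the smoothing site the two strands of $L_0$ are coherently oriented and $R_0$ runs over them as two parallel sheets; attaching there a band with a half-twist of either sense produces Seifert surfaces $R_+$ and $R_-$ for $L_+$ and $L_-$. Such an attachment is a $1$-handle, so $\chi(R_\pm)=\chi(R_0)-1=c$, and since $\chi(L_\pm)$ is the maximum of the Euler characteristic over all Seifert surfaces we get $a\ge c$ and $b\ge c$. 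Hence $c$ is the least of the three quantities and $\chi_1=c$, and everything reduces to showing that at least one of $L_+,L_-$ is spanned by a maximal surface of Euler characteristic exactly $c$, i.e. $\min\{a,b\}=c$.

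This last step is the crux, and I expect it to be the main obstacle, because it is genuinely asymmetric in $L_+$ and $L_-$: a naive cut-and-paste argument would prove $a=c$ and $b=c$ at once, which is false, so the two band surfaces above cannot both be minimizing. I would attack it with Gabai's sutured manifold theory. Pass to the complementary sutured manifold $(M,\gamma)$ obtained by cutting $S^3\setminus\mathrm{int}\,N(L_0)$ along $R_0$; it is taut precisely because $R_0$ is of minimal genus. Build a taut sutured manifold hierarchy and follow the product region sitting at the smoothing site through the decomposition. The co-orientation carried by the taut structure selects one of the two twist senses: re-inserting the crossing with the twist compatible with this co-orientation keeps the surface norm-minimizing, so the corresponding $R_\pm$ remains of maximal Euler characteristic for its link and realizes $\chi=c$, giving $\min\{a,b\}=c$. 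The opposite twist need not preserve norm-minimality, which is exactly the mechanism permitting $\chi_3>\chi_1=\chi_2$. Proving that the compatible twist preserves tautness is the substantive input; the remainder is bookkeeping about how the band meets the sutures.

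For the final clause, suppose $\chi(L_+)=c<\chi(L_-)$, so that the favorable twist above is the one producing $L_+$. The same sutured manifold analysis that makes $R_+$ norm-minimizing should also exhibit the product region at the smoothing site as a plumbing locus: cut off the half-twisted band together with the adjacent sheets by a sphere $\Sigma$ meeting the surface in a single square, so that the piece inside assembles into a Hopf band $H$ and the maximal surface $S':=R_+$ for $L_+$ becomes the Murasugi sum $S*H$ of a maximal surface $S:=R_0$ for $L_0$ with $H$. This plumbing structure cannot hold simultaneously for $L_-$, for otherwise Theorem~\ref{Gabai_Murasugi} applied to both sums would force $a=b=c$; so the existence of the plumbing is itself the content of the favorable case, and the count $\chi(S')=\chi(S)+\chi(H)-1=\chi(S)-1=c$ only serves as a consistency check against that theorem.
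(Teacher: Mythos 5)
This statement is quoted in the survey from Scharlemann--Thompson \cite{ST1989} with no proof given, so your proposal can only be measured against that source. Your first step is correct and standard: attaching a half-twisted band of either sense to a maximal Euler characteristic Seifert surface $R_0$ for $L_0$ at the smoothing site yields Seifert surfaces $R_\pm$ for $L_\pm$ with $\chi(R_\pm)=\chi(L_0)-1$, hence $\chi(L_\pm)\ge\chi(L_0)-1$, and the theorem is then equivalent to $\min\{\chi(L_+),\chi(L_-)\}=\chi(L_0)-1$. You have also correctly located the crux there and named the right toolbox, since \cite{ST1989} does argue via sutured manifold theory.

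The gap is that your second step is a restatement of the goal, not an argument. Observe what the claim amounts to: the complementary sutured manifolds of $R_+$ and $R_-$ are both obtained from the taut sutured manifold $(M,\gamma)$ complementary to $R_0$ by drilling out the same core arc of the band, and they differ only in the framing of the resulting sutures; asserting that at least one of the two framings yields a taut sutured manifold \emph{is} the theorem. Your proposed mechanism --- ``the co-orientation carried by the taut structure selects one of the two twist senses'' --- names no actual procedure: there is no product region canonically sitting at the smoothing site, and no local datum there can distinguish the two senses, because the two senses are locally symmetric and the correct choice is global (if $L_0$ is the positive Hopf link the favorable sense gives the trefoil, while for its mirror image the favorable sense is the opposite one, yet the local pictures at the site are identical). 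In \cite{ST1989} this step consumes the full strength of Scharlemann's generalized Thurston norm machinery \cite{S1989}: one compares the norm taken over surfaces disjoint from the crossing circle $C$ --- for which $L_+$ and $L_-$ are interchanged by the twist homeomorphism along the crossing disk, so their relative norms agree --- with the unrestricted norm, and the substantive theorem is the comparison of these two norms; none of this follows from tautness of $(M,\gamma)$ plus ``bookkeeping about how the band meets the sutures.'' The final clause inherits the same gap and adds a geometric one: for your sphere $\Sigma$ meeting $S'=R_0\cup b$ in a single square with a Hopf band inside to exist, the two sheets of $R_0$ at the site must already be joined inside $R_0$ by a square that is unknotted, correctly framed, and split off from the rest of $R_0$ by a sphere; a maximal surface for $L_0$ need not contain any such square (the two strands may be joined in $R_0$ only by knotted or badly framed arcs), so ``cut off the half-twisted band together with the adjacent sheets'' is not an available move, and producing a taut $S$ with this structure is again precisely the hard part. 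Your remark that Theorem~\ref{Gabai_Murasugi} prevents the plumbing structure from holding for both signs at once is correct, but it only obstructs one sign; it does not produce the plumbing for the other.
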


For a knot $K$, take an arc $\alpha$ satisfying $\alpha\cap K=\partial \alpha$.
We say that a crossing change of $K$ along $\alpha$ is {\em nugatory} if there exists a sphere intersecting $K$ transversely such that $S\cap (K\cup \alpha)=\alpha$.

\begin{conjecture}[Nugatory crossing conjecture {\cite[Problem 1.58]{K1995}}]\label{nugatory}
If $K$ is obtained from $K$ by a crossing change, then the crossing is nugatory.
\end{conjecture}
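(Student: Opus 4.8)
The plan is to translate the cosmetic hypothesis into a Dehn surgery statement about the crossing circle and then to obstruct a non-nugatory crossing by confronting it with a minimal genus Seifert surface. First I would fix a \emph{crossing disk} $D$: a disk meeting $K$ transversely in two points of opposite sign, with $\alpha\subset D$ and $\partial D=c$ the crossing circle. A crossing change along $\alpha$ is realized by $\pm 1$ Dehn surgery on $c$, and since $c$ is unknotted this surgery returns $S^3$ while carrying $K$ to the knot $K'$ produced by the crossing change; the hypothesis says $K'=K$. The crossing is \emph{nugatory} precisely when $c$ bounds a disk disjoint from $K$, equivalently when $c$ is inessential in $E(K)$, for then capping the crossing disk with that disk produces the required sphere meeting $K\cup\alpha$ in exactly $\alpha$. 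I would therefore assume for contradiction that $c$ is \emph{essential} in $E(K)$ and work in the link exterior $M=S^3\setminus N(K\cup c)$, where the cosmetic condition records two distinct fillings of the $c$-torus yielding the same pair $(S^3,K)$.

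The core of the argument is to intersect a minimal genus Seifert surface $F$ for $K$ with the crossing disk $D$. Because the crossing change preserves $K$ it preserves the genus, so $F$ and a minimal genus surface for $K'=K$ must both be available across the surgery. I would isotope $F$ to minimize $|F\cap D|$; by the essential transversality theorem (Theorem \ref{intersection}), together with the incompressibility of $F$ and the irreducibility of $E(K)$, innermost-loop and outermost-arc surgeries should remove every component of $F\cap D$ that is inessential, leaving only essential arcs and loops. If this reduction yields $F\cap D=\emptyset$, then $F$ persists into the complement of the crossing region and, interpreted in both the original and the altered picture, displays $c$ as bounding a disk off $K$, whence the crossing is nugatory. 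The substantive case is the opposite one, in which $F$ meets $D$ essentially so that the two strands of $K$ threading $D$ are genuinely tied together by $F$; here one must show that the same minimal genus cannot be sustained across the crossing change unless $c$ is trivial.

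The hard part will be exactly this last case, which is why the conjecture remains open. Ruling out a genus-preserving reorganization of $F$ across a cosmetic crossing change requires far more than the classical incompressibility bookkeeping behind Theorem \ref{intersection}: one needs the Thurston norm, equivalently the Seifert genus, together with the sutured structure of $E(K)$ cut along $F$, to detect the crossing circle rigidly. I would accordingly bring in a sutured manifold hierarchy in the sense of Gabai to propagate a taut decomposition through the surgery, or the sutured and knot Floer homology package, whose detection of the genus and of fiberedness is unchanged by the cosmetic move and sharply limits how $c$ can link $K$ geometrically while vanishing homologically. Since even these tools do not close the general case, in practice I would first execute the program under a simplifying hypothesis — for fibered knots, where the monodromy rigidifies the complement and the taut structure is essentially unique, or for small knots, where the absence of essential closed surfaces in the exterior leaves little room for a nontrivial reorganization — and only afterward attempt to lift the restriction.
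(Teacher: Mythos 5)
The statement you were asked to prove is a \emph{conjecture}: the paper gives no proof of it, and it remains open. The text surrounding Conjecture~\ref{nugatory} only records partial results --- the trivial knot (Scharlemann--Thompson, via sutured manifold theory), $2$-bridge knots (Torisu), fibered knots (Kalfagianni, via knot Floer/fibredness detection), and some further classes --- so there is no ``paper's own proof'' to compare against. Your proposal, to its credit, is honest about this: it sets up the standard framework correctly (the crossing circle $c=\partial D$, the crossing change as $\mp 1$ surgery on the unknotted circle $c$, and the equivalence of ``nugatory'' with $c$ bounding a disk disjoint from $K$, which matches the paper's sphere formulation), and then stops exactly where the conjecture is genuinely open, namely when every minimal genus Seifert surface meets the crossing disk essentially. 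So the gap is real, but it is the gap everyone faces; your fallback program for fibered and small knots is essentially a re-derivation of the cited partial results rather than new ground.

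Two concrete technical corrections to your outline. First, the case $F\cap D=\emptyset$ that you treat as the ``easy'' alternative is vacuous: $D$ meets $K$ transversely in two points, and $K=\partial F$, so $D\cap F$ always contains at least one arc with endpoints at those punctures; the correct reduction (as in the Balm--Friedl--Kalfagianni--Powell setup) is to isotope $F$ so that $D\cap F$ is a \emph{single} arc joining the two punctures, and emptiness is never available as a conclusion. Relatedly, Theorem~\ref{intersection} does not apply directly, since $D$ is not properly embedded in $E(K)$; one must pass to the exterior of the link $K\cup c$, where $D$ becomes a twice-punctured disk, and essentiality there requires separate care. Second, your remark that ``the crossing change preserves $K$, hence preserves the genus'' carries no information by itself --- it is an automatic consequence of the hypothesis, not a constraint; the substance of the known partial results is that some invariant (the sutured Floer homology of the complement of $F$, or the uniqueness of the fibration) can see the arc $D\cap F$ rigidly enough to force $c$ to bound a disk in the complement of $F$. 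Absent such a rigidity statement in general, the argument cannot close, which is precisely why the conjecture is still listed as open.
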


Conjecture \ref{nugatory} holds for the trivial knot (\cite{ST1989}), $2$-bridge knots (\cite{T1999}), fibered knots (\cite{K2012}), and other knot classes (\cite{BFKP2012}, \cite{BK2013}).

\subsection{Coiled surfaces ($K\subset F$)}

\subsubsection{Neuwirth conjecture}

\begin{conjecture}[Neuwirth conjecture, \cite{N1964}]
For any non-trivial knot $K$, there exists a closed surface $F$ such that it contains $K$ non-separatingly and it is essential in the exterior of $K$.
\end{conjecture}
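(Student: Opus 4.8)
The plan is to split the problem along Thurston's trichotomy (the knot hyperbolization theorem \cite{T1982}), after first reducing to prime knots. If $K=K_1\#K_2$, I would take coiled surfaces $F_1,F_2$ for the prime summands and fuse them across the connect-sum region along the decomposing sphere $\Sigma$ (which meets $K$ in two points): arranging each $F_i$ to meet $\Sigma$ in a single arc through the two points of $K\cap\Sigma$ yields a closed surface $F$ containing $K=K_1\#K_2$ non-separatingly, and essentiality should survive because the decomposing annulus is itself essential (Theorem \ref{classify_annuli}). Thus it suffices to treat prime knots, which by \cite{T1982} are torus, satellite (in particular cabled), or hyperbolic.

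For torus and cabled knots the construction is explicit and needs no deep input. A $(p,q)$-torus knot lies on the standard Heegaard torus $T$ as a $(p,q)$-curve, which is non-separating since $[K]\neq 0$ in $H_1(T)$ when $p,q\neq 0$; and $T\cap E(K)$ is exactly the cabling annulus, which is essential by Theorem \ref{classify_annuli}. The same argument works for a $(p,q)$-cable of a companion $C$, using the companion torus $\partial N(C)$ in place of $T$. So the torus and cable cases are immediate, and the difficulty is concentrated in the hyperbolic case and in satellites with more complicated patterns.

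For \emph{large} knots I would exploit the correspondence, recalled earlier in the text, between coiled surfaces and integral peripherally compressible essential closed surfaces: a large knot carries an essential closed surface $Q$, and if $Q$ can be taken peripherally compressible with \emph{integral} peripheral slope, then the conversion of \cite{OT2003} produces a coiled surface whose intersection with $E(K)$ is essential (the non-separation of $K$ corresponding to the nonzero surface framing, exactly as in the torus-knot model). The sub-problem is to upgrade an arbitrary essential closed surface to one that is peripherally compressible with integral rather than meridional slope; I would attack this by tubing $Q$ along an arc to $\partial E(K)$ and controlling the resulting intersections via the essential transversality theorem (Theorem \ref{intersection}), together with the fact (\cite{GL1987}, \cite{CGLS1987}) that the admissible non-meridional slopes are integral.

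The hard part will be small hyperbolic knots, where by definition there is \emph{no} essential closed surface to bootstrap from, so the correspondence above is unavailable and the surface must be built directly. My proposal is to start from a non-separating essential bounded surface of nonzero integral boundary slope (whose existence I would seek by Culler--Shalen character-variety detection, knowing from \cite{CS1984} that a non-trivial knot has at least two boundary slopes) and cap it off along $K$ inside $N(K)$; organizing the search with the Floyd--Oertel branched surfaces \cite{FO1984} should keep the candidates finite and normal. The genuine obstacle is essentiality of the capped-off surface: one must guarantee that the closing-up neither creates a compressing nor a boundary-compressing disk, does not render the result boundary parallel, and leaves $K$ non-separating. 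I expect this last step---forcing incompressibility of a surface obtained by capping a boundary-slope surface around the knot, precisely when no closed essential surface is present to guide the construction---to be the crux, and the reason the statement remains a conjecture in full generality.
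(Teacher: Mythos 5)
This statement is the Neuwirth conjecture: the paper records it as an \emph{open} conjecture and offers no proof, only a list of classes for which it has been verified (alternating knots via Aumann \cite{A1956}, generalized alternating knots, Montesinos knots, knots with $g_I(K)<2g(K)$, etc.), together with the remark that the Strong Neuwirth conjecture --- existence of an essential non-orientable spanning surface, which would imply this one --- was later disproved by Dunfield \cite{ND2015}. So there is nothing in the paper to compare your argument against, and a complete proof would be a substantial new theorem. Your proposal, as you yourself concede in its final sentence, is a research program rather than a proof: the small hyperbolic case, which you leave open, is exactly where the content of the conjecture lies.

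Beyond that admitted gap, one intermediate step fails concretely. For large knots you propose to upgrade an arbitrary essential closed surface $Q$ to one that is peripherally compressible with \emph{integral} (rather than meridional) peripheral slope and then apply the conversion of \cite{OT2003}. By the paper's own definitions this upgrade is impossible in general: a knot is called meridional precisely when \emph{every} essential closed surface in its exterior is meridionally peripherally compressible, and the meridional class contains alternating, almost alternating, Montesinos, toroidally alternating and algebraically alternating knots --- many of which are large (of the $2977$ knots with at most $12$ crossings, $1019$ are large). For such knots no amount of tubing and transversality will produce an essential closed surface of integral peripheral slope, since none exists; yet these knots do satisfy the Neuwirth conjecture, via essential spanning (checkerboard) surfaces and Murasugi sums --- the Aumann and Ozawa--Rubinstein mechanism, which is entirely different from your closed-surface bootstrap. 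So your trichotomy routes the known cases through the wrong mechanism, and the one case it cannot reach by construction (small hyperbolic knots) is the one you acknowledge you cannot close. The torus/cable observation is correct, and the connected-sum fusion is plausible in outline (essentiality of the fused surface still needs an innermost-curve argument), but as a whole this is not a proof of the statement.
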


\begin{conjecture}[Strong Neuwirth conjecture, \cite{OR2012}, cf. \cite{IOT2002}]
For any prime knot except for torus knots, there exists an essential non-orientable spanning surface\footnote{After writing the original article in Japanese, Dunfield found a counterexample for the Strong Neuwirth conjecture (\cite{ND2015}).}.
\end{conjecture}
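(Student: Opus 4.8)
The plan is to build the surface from a diagram and then make it essential using the reduction machinery developed earlier in this survey. First I would fix a prime, non-torus diagram $D$ of $K$ and take its two checkerboard surfaces $B$ (black) and $W$ (white); unless $D$ is special, at least one of them, say $F_0$, is non-orientable, and it is automatically a spanning surface, $\partial F_0 = K$. For alternating knots, and more generally for the toroidally alternating and algebraically alternating knots that are \emph{meridional} in the sense of the table above, Menasco-type arguments (inserting bubbles at the crossings and intersecting with the diagram sphere and the boundaries of the bubbles) show that such a checkerboard surface is incompressible and boundary incompressible, while primeness and the non-torus hypothesis rule out boundary parallelism. This handles the bulk of the classically understood cases directly, so the real content is the passage to an arbitrary prime non-torus knot.

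For a general knot the checkerboard surface need not be essential, so the second step is to start from \emph{any} non-orientable spanning surface $F_0$ and simplify it. By the definition of essentiality for non-orientable surfaces, $F_0$ is essential exactly when $\widetilde{F_0} := \partial N(F_0)$, the connected orientable double cover of $F_0$ appearing as the horizontal boundary of the twisted $I$-bundle neighborhood $N(F_0)$, is incompressible, boundary incompressible and not boundary parallel in $E(K)$; this is also the link to the Neuwirth conjecture above. I would therefore run the usual reduction on $\widetilde{F_0}$ inside the irreducible, boundary irreducible manifold $E(K)$: whenever a compressing or boundary-compressing disk appears, surger along it, using the essential transversality theorem (Theorem \ref{intersection}) to keep intersections controlled and the handlebody obstruction (Theorem \ref{handlebody}) to certify termination. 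The essential point of the bookkeeping is that these surgeries must be performed \emph{equivariantly} with respect to the $I$-bundle involution, so that the surgered horizontal boundary is again $\partial N(F_1)$ for a spanning surface $F_1$ obtained from $F_0$ by an honest compression or boundary compression in $S^3$. When no further reductions exist, the process ends at a spanning surface $F$ with $\partial N(F)$ essential, hence with $F$ essential.

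The hard part, and the reason this is posed as a conjecture rather than proved, is to guarantee that \emph{non-orientability survives} the reduction. A compression or boundary compression of a non-orientable surface may preserve non-orientability or may orientabilize it, and nothing local dictates which happens; I would try to protect it by tracking the $\mathbb{Z}/2$ class in $H_1(F;\mathbb{Z}/2)$ carrying the orientation character and arguing that the compressing curves can always be chosen orientation preserving, so that surgery along them cannot make $F$ orientable. This is precisely the step I expect to fail in general: there is no a priori reason that the essential spanning surface produced this way must be non-orientable, nor that a knot carrying only inessential non-orientable spanning surfaces cannot exist. Indeed, as the footnote records, Dunfield subsequently found such a counterexample, so any argument of this shape must break exactly at the claim that an orientation-preserving compression is always available. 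A correct treatment would consequently need a genuinely new input at that step, for instance from the geometry furnished by the knot hyperbolization theorem, rather than surface surgery alone.
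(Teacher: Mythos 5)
You were asked to prove a statement that the paper itself does not prove: it is posed as a conjecture, and the attached footnote records that it is actually \emph{false} --- after the Japanese original was written, Dunfield constructed a knot admitting no essential non-orientable spanning surface (\cite{ND2015}). So there is no proof in the paper to compare against, and no correct proof can exist. Your proposal handles this situation about as well as a blind attempt can: you do not claim a complete argument, and you locate the fatal gap exactly where it lies, namely at the step asserting that compressions can always be chosen so that non-orientability survives the reduction. Dunfield's example shows that this step is not merely unproven but unprovable in general, so your self-assessment is accurate and your final sentence (that a genuinely new input would be needed, and in fact the statement fails) matches the true status of the conjecture.

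Two technical corrections to the sketch itself, since they matter even for the restricted cases you try to handle. First, your reduction scheme conflates compressions of $F_0$ with compressions of $\partial N(F_0)$. The paper defines essentiality of a non-orientable surface via the orientable double $\partial N(F_0)$, and a compressing disk for $\partial N(F_0)$ need not respect the $I$-bundle structure, i.e., need not descend to an ``honest'' compression of $F_0$ in $S^3$; your proposed equivariant surgery is not available in general. One can always reach a \emph{geometrically} incompressible non-orientable spanning surface (minimize first Betti number), but geometric incompressibility of $F_0$ does not imply incompressibility of $\partial N(F_0)$, and this gap between the geometric and algebraic notions is precisely the room in which Dunfield's counterexample lives. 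Second, in your first paragraph the classes for which checkerboard-type arguments genuinely succeed are those listed in the paper under the (ordinary) Neuwirth conjecture --- alternating knots (\cite{A1956}), generalized alternating knots (\cite{O2006}), Montesinos knots and the other families treated in \cite{OR2012} --- and even there one must separately verify that the essential spanning surface produced is non-orientable (cf. \cite{IOT2002}); being ``meridional'' in the sense of the paper's table concerns closed surfaces in the exterior and does not by itself yield essentiality of a checkerboard surface.
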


The Neuwirth conjecture holds for alternating knots (\cite{A1956}) and generalized alternating knots (\cite{O2006}), knots satisfying $g_I(K)<2g(K)$ for the interpolating genus $g_I(K)$ (\cite{N1964}), knots with non-orientable spanning surfaces obtained by Murasugi sums of essential spanning surfaces (\cite{O2011}), Montesinos knots (\cite{OR2012}), knots with $11$ crossings or less except for $K11_n118$ and $K11_n126$ (\cite{OR2012}), knots with degree $1$ mappings to knots satisfying the Neuwirth conjecture (\cite{OR2012}), almost all generalized arborescently alternating knots (\cite{OR2012}), uniformly twisted knots (\cite{O2012}).

\subsubsection{Invariants for coiled surfaces}

\begin{definition}[\cite{O2009}]
Let $F$ be a closed surface containing a knot $K$.
We define the {\em representativity} of a pair $(F,K)$ as $\displaystyle r(F,K)=\min_{D\in\mathcal{D}_F} |\partial D\cap K|$, where $\mathcal{D}_F$ denotes the set of all compressing disks for $F$.
If $F$ is a sphere and $K$ is the trivial knot, we define as $r(F,K)=1$.
Moreover, we define the {\em representativity} of a knot $K$ as 
\[
r(K)=\max_{F\in\mathcal{F}} r(F,K),
\]
where $\mathcal{F}$ denotes the set of all closed surfaces containing $K$.
\end{definition}

It holds that $r(K)=1$ for the trivial knot $K$, and $r(K)\ge 2$ for a non-trivial knot $K$.
Moreover, it holds that $r(K)=2,\ =\min\{p,q\},\ \le 3$ for $2$-bridge knots, $(p,q)$-torus knots, algebraic knots $K$ respectively, and the pretzel knots of representativity $3$ are determined (\cite{O2012p}).

\begin{conjecture}[\cite{O2009}]
For an alternating knot $K$, it holds that $r(K)=2$\footnote{Kindred solved this conjecture affirmatively in \cite{K2017}.}.
\end{conjecture}

\begin{theorem}[\cite{O2009}, \cite{BO2017}]\label{representativity_bridge}
It holds that $r(K)\le trunk(K)/2$.
\end{theorem}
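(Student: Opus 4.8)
The plan is to prove the inequality pointwise over surfaces. Since $r(K)=\max_{F\in\mathcal{F}}r(F,K)$, it suffices to fix a single closed surface $F$ containing $K$ and produce, for that $F$, a compressing disk $D$ with $|\partial D\cap K|\le b(K)$; taking the maximum over $F$ then yields $r(K)\le b(K)$. The degenerate cases dispose of themselves from the stated conventions: if $K$ is trivial then $r(K)=1=b(K)$, and if $F$ is a sphere then $K\subset S^2$ forces $K$ to be trivial. So I may assume $K$ is nontrivial and $F$ has positive genus, which guarantees that $F$ admits compressing disks at all.

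First I would realize the bridge number by fixing a \emph{minimal} bridge position of $K$: a bridge decomposing sphere $P$ with exactly $b=b(K)$ maxima in the upper ball $B_+$ and $b$ minima in the lower ball $B_-$, so that $|K\cap P|=2b$. Removing a regular neighborhood of the union $K_+$ of the $b$ trivial upper arcs exhibits $B_+\setminus\mathrm{int}\,N(K_+)$ as a genus-$b$ handlebody, and symmetrically below; this is the bridge splitting of $E(K)$ into two genus-$b$ handlebodies, and it is the structure I would exploit to charge intersections to the number of bridges. I would also fix complete systems of upper and lower bridge disks $\Delta_1,\dots,\Delta_b$ and $\Delta'_1,\dots,\Delta'_b$ certifying the triviality of the two tangles.

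Next, among all compressing disks for $F$ I would choose one, $D$, minimizing the number $n:=|\partial D\cap K|$ of times its boundary crosses $K$; note that $\mathrm{int}\,D\cap K=\emptyset$ automatically, since $K\subset F$ and $D\cap F=\partial D$, so every crossing sits on $\partial D$. The goal is $n\le b$. Putting $D$ in general position with the bridge disks by Theorem \ref{general position} and examining $D\cap\bigl(\bigcup_i\Delta_i\cup\bigcup_j\Delta'_j\bigr)$, I would run an innermost-loop/outermost-arc analysis: an innermost subdisk cut off on some $\Delta_i$ provides a surgery of $\partial D$ across the corresponding bridge. Each such surgery should either strictly reduce $n$, contradicting minimality, or slide a crossing point past a maximum. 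The bookkeeping I would aim for is that, after all reductions are exhausted, the points of $\partial D\cap K$ match injectively to the $b$ maxima of $K$, each arc of $K$ between consecutive crossings being forced to carry its own maximum; this yields $n\le b$ directly, rather than the naive $2b$ obtained from merely counting $|K\cap P|$.

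The main obstacle is precisely getting this constant right. A crude innermost-disk argument on the bridge sphere $P$ itself only sees all $2b$ points of $K\cap P$, and, worse, the innermost loops it produces are typically \emph{inessential} on $F$ (they bound caps over individual maxima) and so are not compressing disks at all. The delicate part is therefore twofold: ensuring that the surgered loop stays \emph{essential} on the whole of $F$, and arranging the reduction so that crossings are charged to \emph{maxima}, i.e. to half the critical points, rather than to every level-sphere intersection, by exploiting the asymmetry between the upper and lower bridge disks. I expect to need the handlebody structure of the two sides together with Theorem \ref{handlebody} to guarantee that whenever $n>b$ some bridge arc meets $\partial D$ in a configuration admitting a strictly reducing surgery, thereby forcing the minimum $n=r(F,K)$ down to at most $b$.
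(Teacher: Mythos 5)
The survey you are reading does not actually prove this theorem: it states it with a citation to \cite{O2009}, where the bound is obtained (roughly) by a sweep-out argument controlling how the surface $F$ itself meets the bridge sphere and its parallel level spheres, not by intersecting a minimizing compressing disk with a system of bridge disks. Judged on its own terms, your proposal is a setup plus an acknowledged hole where the theorem lives. The reductions you make are fine and routine: it suffices to treat one $F$ at a time, the sphere/unknot cases follow from the stated conventions, positive-genus closed surfaces in $S^3$ are compressible (Theorem~\ref{3-sphere}), and one may take $D$ minimizing $n=|\partial D\cap K|$. But the inequality $n\le b$ is then supported only by phrases such as ``the bookkeeping I would aim for'' and ``I expect to need,'' i.e., the injection of the points of $\partial D\cap K$ into the set of $b$ maxima is asserted as a goal, not derived. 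That counting claim --- that every arc of $K$ cut off by $\partial D\cap K$ is forced to carry its own maximum --- is precisely the content of the theorem, and nothing in the proposal makes it happen.

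Moreover, the mechanism you propose for forcing it does not work as described. The bridge disks are not disjoint from $F$: each $\partial\Delta_i$ contains the bridge arc $\alpha_i\subset K\subset F$, and $\Delta_i$ may meet $F$ in further loops and arcs. Hence an ``innermost'' or ``outermost'' piece of $\Delta_i$ cut off by $D\cap\Delta_i$ need not have interior disjoint from $F$, and a curve obtained by cut-and-paste of $\partial D$ with such a piece in general runs partly along $\beta_i\subset P$ (the bridge sphere), so it does not lie on $F$ at all; it is therefore not a compressing disk for $F$, and no contradiction with the minimality of $n$ arises. Theorem~\ref{handlebody} does not repair this, since it concerns essential surfaces in handlebodies, whereas the objects produced by your surgeries are neither properly embedded in the two bridge handlebodies nor known to be essential. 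Finally, the issue you correctly flag --- that innermost/outermost reductions tend to produce curves that are inessential in $F$ --- is exactly the crux, and deferring it to ``the delicate part'' leaves the argument without its key step. In short: correct framing, correct identification of the difficulty (getting $b$ rather than $2b$, and preserving essentiality), but the proof of the crucial lemma is missing and the sketched route to it would fail.
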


A knot has no essential $n$-string tangle decomposing sphere for $n<r(K)/2$ (\cite{O2009}).

\begin{definition}[\cite{O2009}]
We define the {\em closed genus} for a non-trivial knot $K$ as
\[
cg(K)=\min_{F\in \mathcal{F}} \{g(F)|r(F,K)\ge 2\},
\]
where $\mathcal{F}$ denotes the set of all closed surfaces containing $K$.
For the trivial knot $K$, we define as $cg(K)=0$.
\end{definition}

In a similar way as the additivity of the genera for knots, it turns out that the closed genera are additive with respect to connected sums for knots.

\begin{definition}[\cite{M1994}]
For any knot $K$, there exists a Heegaard surface $F$ of $S^3$ containing $K$.
We call the minimal genus of these Heegaard surfaces $F$ the {\em $h$-genus} of $K$, denoted by $h(K)$.
\end{definition}

\begin{theorem}[\cite{M1994}]
It holds that $t(K)\le h(K) \le t(K)+1$.
\end{theorem}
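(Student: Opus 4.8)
The plan is to convert both inequalities into statements about the Heegaard genus of the exterior $E(K)$. I would first record the standard reformulation: a minimal system of $t(K)$ unknotting tunnels $\tau_1,\dots,\tau_{t(K)}$ exhibits $W=N(K)\cup N(\tau_1)\cup\dots\cup N(\tau_{t(K)})$ as a genus $t(K)+1$ handlebody whose complement $\overline{S^3-W}$ is again a handlebody, and deleting $\mathrm{int}\,N(K)$ turns $W$ into the compression body $N(\partial E(K))\cup\bigcup N(\tau_i)$ with $\partial_-=\partial E(K)$. Since $\partial E(K)$ is a single torus, every Heegaard splitting of $E(K)$ carries the whole boundary on its compression-body side, so the Heegaard genus of $E(K)$ is exactly $t(K)+1$. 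With this in hand the claim splits into (a) $h(K)\le t(K)+1$, and (b) any genus-$g$ Heegaard surface of $S^3$ containing $K$ yields a genus-$(g+1)$ Heegaard splitting of $E(K)$, whence $t(K)\le g$.

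For (a) I would start from the handlebody $W$ above. As $\overline{S^3-W}$ is a handlebody, $\partial W$ is a genus $t(K)+1$ Heegaard surface of $S^3$. The knot $K$ is one of the loops of the spine $K\cup\tau_1\cup\dots\cup\tau_{t(K)}$ of $W$, so I would push it to a longitudinal copy $\bar K\subset\partial W$, routed around the small disks where the tunnels meet $N(K)$. This $\bar K$ is a boundary push-off of $K$ and hence isotopic to $K$ in $S^3$, so $K$ lies on a genus $t(K)+1$ Heegaard surface and $h(K)\le t(K)+1$.

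For (b), writing $S^3=V_1\cup_F V_2$ with $K\subset F$ and $g=h(K)$, I would push $K$ off $F$ to $K^+\subset\mathrm{int}\,V_1$, parallel to $F$ through an annulus $\mathcal A\subset V_1$, so that $E(K)=V_2\cup_F X$ with $X=\overline{V_1-N(K^+)}$ and $\partial X=F\sqcup\partial E(K)$. I would then attach to $V_2$ a single $1$-handle $h\subset X$, guided by a spanning arc of $\mathcal A$, producing a genus-$(g+1)$ handlebody $\mathcal H=V_2\cup h$, and argue that the complement $C=\overline{E(K)-\mathcal H}$ is a compression body with $\partial_-C=\partial E(K)$. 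This gives a genus-$(g+1)$ Heegaard splitting of $E(K)$, so its Heegaard genus is at most $g+1$ and $t(K)\le g=h(K)$.

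The main obstacle is exactly the verification in (b) that $C$ is a compression body. Drilling $N(K)$ out of $V_1\cup_F V_2$ does not in general leave handlebodies — for a torus knot the piece $X$ is a cable space rather than a handlebody — so no Heegaard splitting can be read off by inspection. The real content is to show that the boundary-parallelism annulus $\mathcal A$ provides precisely the data needed to absorb $\partial E(K)$ into a compression body at the cost of a single extra handle, i.e.\ that $X$ admits a Heegaard splitting inducing $\partial_-=F$ on the $V_2$-side and $\partial_-=\partial E(K)$ on the other, of genus $g+1$. The remaining ingredients — the reformulation, the push-off in (a), and the Euler-characteristic genus bookkeeping — I expect to be routine.
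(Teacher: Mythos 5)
A preliminary remark: the survey states this theorem only with a citation to Morimoto \cite{M1994} and contains no proof of it, so there is no argument in the paper itself to compare yours against; what follows assesses your proposal on its own terms and against the standard argument from the cited source. Your reformulation $g(E(K))=t(K)+1$ and your part (a) are correct and complete: for $W=N(K\cup\tau_1\cup\dots\cup\tau_{t(K)})$ both $W$ and $\overline{S^3-W}$ are handlebodies, so $\partial W$ is a Heegaard surface, and the longitudinal push-off of $K$ routed past the tunnel feet realizes $K$ on it. The genuine gap is in part (b), and you have named it yourself: the assertion that $C=\overline{E(K)-\mathcal{H}}$ is a compression body with $\partial_-C=\partial E(K)$ is not a verification one may defer — it \emph{is} the inequality $t(K)\le h(K)$. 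Your cable-space remark correctly explains why no splitting can be "read off by inspection", but flagging the obstacle is not overcoming it; as written, the proposal proves only $h(K)\le t(K)+1$.

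The gap is fillable, and your $\mathcal{H}=V_2\cup h$ is in fact the right region, provided $h$ clasps the drilled tube meridionally (its core must cross $\mathcal{A}$ essentially once; a handle whose core is merely parallel to $F$ leaves $C$ a non-compression body, so this choice is not cosmetic). The standard way to complete the argument is to exhibit $\mathcal{H}$ as the exterior of a tunnel system coming from a spine: take a spine $\Gamma_1$ of $V_1$ and a vertical arc $\delta$ joining $K^+$ to $\Gamma_1$ in the product structure $\overline{V_1-N(\Gamma_1)}\cong F\times I$, and set $W'=N(K^+\cup\delta\cup\Gamma_1)$. A collar-absorption computation shows $\overline{S^3-W'}\cong V_2\cup\bigl((\text{pair of pants})\times I\bigr)$, where the pants$\times I$ meets $V_2$ along two annuli that are longitudinal in the corresponding solid-torus pieces; absorbing those solid tori into $V_2$ leaves exactly $V_2$ plus one $1$-handle, i.e. your $\mathcal{H}$, and the boundary-parallelism annulus $\mathcal{A}$ is precisely what makes this absorption possible. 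Dually, $C=\overline{W'-N(K^+)}=(\partial E(K)\times I)\cup N(\delta)\cup N(\Gamma_1)$, which is visibly $(\partial E(K)\times I)$ plus $g$ $1$-handles, a compression body. That computation — essentially Morimoto's — is the content your proposal omits. One further warning: this cannot be cheapened by taking tunnels inside the surface itself, since cutting $\overline{F-N(K)}$ into disks requires $2g-1$ arcs and yields only $t(K)\le 2h(K)-1$; the spine of the handlebody on the side into which you pushed $K$ is genuinely needed.
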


Corresponding to the stably equivalent theorem for Heegaard splittings by Reidemeister--Singer (\cite{R1933}, \cite{S1933}), also for Heegaard surfaces containing a knot $K$, if the surface slopes coincide, then they are $K$-stably equivalent (\cite{AS2009}).

\section{Conclusion}

There is a close relationship between incompressible surfaces, Morse functions and generalized Heegaard splittings.
One can see the existence of Heegaard splittings by the neighborhood of $n$-simplexes $(n=0,1,2,3)$ as $n$-handles in a triangulation of a $3$-manifold.
Here, both of the union of all $0$, $1$-handles and the union of all $2$, $3$-handles are handlebodies, and the surface separating them is a Heegaard surface.
If a $1$-handle is disjoint from a $2$-handle (weakly reducible Heegaard splitting \cite{CG1987}), then we can exchange the order of them.
The generalized Heegaard splitting which is obtained by exchanging $1$-handles and $2$-handles as much as possible is called a {\em locally thin position}.
In a locally thin position, each Heegaard surface is strongly irreducible, and locally thin level surfaces are incompressible (\cite{ST1994}).
An $i$-handle in a generalized Heegaard splitting corresponds to the index $i$ critical point for a Morse function on a $3$-manifold (\cite{M2002}).
Therefore, we can obtain an incompressible surface as a locally thin level surface for a Morse function (locally thin Morse position) corresponding to a locally thin position.
Conversely, for any incompressible surface, we can construct a locally thin Morse position such that it is a locally thin level surface.
From the above, considering incompressible surfaces arrives at considering Morse functions corresponding to locally thin positions.

At the present, although incompressible surfaces and strongly irreducible Heegaard surfaces are the main stream for surfaces in $3$-manifolds, a concept which extends them is proposed by Bachman (\cite{B2012a}).
For a separating surface in a $3$-manifold, by regarding isotopy classes of all compressing disks as vertices, and if compressing disks corresponding to $m+1$ vertices can be taken so that they are mutually disjoint, then they bound an $m$-simplex, and a disk complex is obtained.
The minimal number $n$ such that $n-1$-dimensional homotopy group of the disk complex is non-trivial is called the homotopy index.
If the disk complex is an empty set, then we define the homotopy index as $0$.
For a surface whose disk complex is an empty set or non-contractible, we define the topological index as the homotopy index of the disk complex.
In this definition, a surface is incompressible, strongly irreducible, cirtical if and only if the topological index is $0$, $1$, $2$ respectively.
In general, for a surface of topological index $n$, a theorem which extends the essential transversality theorem is proved by Bachman, and it might be a main stream of studies on surfaces in $3$-manifolds after this.

In recent years ($10\sim 20$ years), in low dimensional topology, the direction which studies the structure of complexes naturally obtained from objects is a main stream.
There are two construction methods for complexes.
One is a method that for all objects which are mutually moved by finite some operations, by regarding each object as a $0$-simplex, and if all of $n+1$ objects are mutually moved by a single operation, then they bound an $n$-simplex.
For examples of complexes of this type, there are Gordian complex (\cite{HU2002}), Reidemeister complex (\cite{M2012}), bridge complex, width complex (\cite{S2009}) etc.
Another is a method that by regarding each object as a $0$-simplex, and if all $n+1$ objects are mutually disjoint, then they bound an $n$-simplex.
For examples of complexes of this type, there are curve complex (\cite{H1981}), Kakimizu complex (\cite{K1992}) etc.

Finally, we stated a survey on surfaces with respect to knots in this article, but we regret that we cannot cover all definitions, results, references.
As texts which are written on surfaces embedded in $3$-manifolds, we cite \cite{M1996}, \cite{HM1995} for Japanese literatures, \cite{JS2014}, \cite{FM1997}, \cite{L1999}, \cite{L2014}, \cite{JJ0},  \cite{AH1}, \cite{SF1} for English literatures.
For more developed methods, we cite \cite{S1989}, \cite{G}, \cite{S2002}, \cite{PBS}, \cite{AFW}, \cite{M2016}.

\bigskip

\noindent{\bf Acknowledgements.}

The author would like thank to Yeonhee Jang for her comments on Conjectures \ref{rank_genus} and \ref{meridian_bridge}.

\bibliographystyle{amsplain}

\end{document}